\newtheorem{theorem}{Theorem}[section]
\newtheorem{lemma}[theorem]{Lemma}
\newtheorem{proposition}[theorem]{Proposition}
\newtheorem{corollary}[theorem]{Corollary}
\newtheorem{remark}[theorem]{Remark}
\renewcommand{\geq}{\geqslant}
\renewcommand{\leq}{\leqslant}
\renewcommand{\tilde}{\widetilde}
\DeclareMathOperator*{\essinf}{ess\,inf\,}
\def\R{\mathbb{R}}
\def\C{\mathbb{C}}
\def\d{\mathrm{\,d}}
\def\part{\partial}
\def\Ker{\mathrm{Ker}\,}
\def\d{\mathrm{d}}
\def\ve{\varepsilon}
\def\part{\partial}
\def\R{\mathbb{R}}
\def\mm{\kern +2pt \raisebox{+0.5 pt}{$\shortmid$}\kern -2pt\hbox{$\multimap$}\kern +2pt}
   \newcommand{\be}{\begin{equation}}
  \newcommand{\ee}{\end{equation}}
\numberwithin{equation}{section}
\author{Aleksander \'{C}wiszewski}
\address{Faculty of Mathematics and Computer Science \newline Nicolaus Copernicus University \newline 87-100 Toru\'n, Poland}
\email{aleks@mat.umk.pl}
\author{Piotr Kokocki}
\address{Faculty of Mathematics and Computer Science \newline Nicolaus Copernicus University \newline 87-100 Toru\'n, Poland}
\email{pkokocki@mat.umk.pl}
\title[Standing Waves for...]{Standing Waves for Schr\"{o}dinger Equations with Kato-Rellich potentials}
\begin{document}

\maketitle
\vspace{-7mm}
\begin{abstract}
We show the existence of standing waves for the nonlinear Schr\"{o}dinger equation with Kato-Rellich type potential. We consider both resonant with the nonlinearity satisfying one of Landesman-Lazer type or sign conditions and non-resonant case where the linearization at infinity has zero kernel. The approach relies on the geometric and topological analysis of the parabolic semiflow associated to the involved elliptic problem. Tail estimates techniques and spectral theory of unbounded linear operators are used to exploit subtle compactness properties necessary for use of the Conley index theory due to Rybakowski. The obtained results extend those by Prizzi \cite{Prizzi-FM} for the non-resonant case, resolve the existence problem at resonance and complete those from \cite{Cw-Kr-2019} where the bifurcation of stationary solutions from infinity was studied.
\end{abstract}

\section{Introduction}

In the time-dependent nonlinear Schr\"{o}dinger type equation
$$
i \psi_t = -\Delta \psi+V(x)\psi + g(x,|\psi|)\frac{\psi}{|\psi|}, \quad x\in \R^N, \ t>0, 
$$
where $V:\R^{N}\to\R$ is a potential and $g:\R^N\times \R\to \C$ is a nonlinear map, searching for a standing wave solution having the form $\psi(x,t) = e^{-i \lambda t} u(x)$ with $\lambda\in\R$, leads to the following elliptic problem
$$
-\Delta u + V(x)u-\lambda u = f(x, u(x)), \quad x\in\R^N,  \leqno{(E)_\lambda}
$$ 
where $f:\R^N\times \R \to \R$ is a nonlinear map.\\
\indent Many authors have extensively studied the equation $(E)_\lambda$ by the use of various methods.
In \cite{Prizzi-FM} the existence of solutions by Conley index was obtained in the non-resonant case with Kato-Rellich type $V$. Bifurcation theory involving topological degree methods was used in \cite{Genoud} for bounded $V$ in order to obtain the existence of large solutions near eigenvalues, i.e. the so-called bifurcation from infinity. For the same bifurcation problems with bounded potential $V$ variational methods were applied in \cite{Kr-Sz}, whereas in \cite{Stuart1}, an argument relying on topological degree were employed. 
Conley index techniques were used in \cite{Cw-Kr-2019} to study bifurcations from infinity for $(E)_\lambda$ with potentials of Kato-Rellich type, while in \cite{MR4293056}, the same approach were utilized in the case of bounded potentials $V$. \\
\indent Throughout the paper we assume that $V$ is a Kato-Rellich type potential, that is, 
$$\leqno{(K\!R)}\qquad \qquad
\left\{\begin{gathered}
V=V_\infty+V_0, \ \text{where} \ V_\infty \in L^\infty(\R^N) \ \text{and} \ V_0\in L^p(\R^N) \ \text{with $p$ satisfying}\\
p\geq 2 \ \text{ if } \ 1\leq N\leq 3 \ \ \text{ and } \ \ p>N/2  
 \ \text{ if } \ 
N\geq 4.
\end{gathered}\right.$$
We also assume that the map $f:\R^N\times \R\to\R$ satisfies the conditions:\\[2pt]
\noindent\makebox[9mm][r]{$(f1)$} \parbox[t][][t]{165mm}{the map $f(\,\cdot\,,u)$ is Lebesgue measurable, for all $u\in\R$, and $c(x):=f(x,0)$ is an element of $L^{2}(\R^{N})$,}\\[2pt]
\noindent\makebox[9mm][r]{$(f2)$} \parbox[t][][t]{160mm}{there is a function $l:\R^N\to\R$ such that 
\begin{equation*}
|f(x,u)-f(x,v)|\leq l(x)|u-v|\quad \text{for a.e. } x\in\R^N \text{ and all } u,v\in\R,
\end{equation*}
and the following (stronger Kato-Rellich type) conditions are satisfied
$$\leqno{(PT)}\qquad \qquad
\left\{\begin{gathered}
l=l_\infty+l_0, \ \text{where} \ l_\infty \in L^\infty(\R^N) \ \text{and} \ l_0\in L^r(\R^N) \ \text{with $r$ satisfying}\\
r\geq 2  \ \   \text{if} \  \  N=1,  \ \ r>2 \ \ \text{if}\ \ N=2\ \ \text{and}\ \ r\geq N \  \text{if }  \ N\geq 3.
\end{gathered}\right.$$}\\
In some cases we shall also assume that\\[2pt]
\noindent\makebox[9mm][r]{$(f3)$} \parbox[t][][t]{165mm}{there exists $a:\R^N\to\R$ satisfying the condition $(KR)$ such that
$$
u\cdot f(x,u) \leq a(x)|u|^2 \quad \text{for a.e. } x\in\R^N \text{ and all } u\in\R.
$$}\\[2pt]
\indent Our study of the existence of standing waves and the related problem $(E)_\lambda$  strongly relies on the fact that $V$ is a Kato-Rellich type potential and its consequences for the Schr\"{o}dinger operator $-\Delta+V$, including its spectral properties. In this paper we shall consider the realization of $-\Delta+V$ in $L^2(\R^N)$, which appears to be a closed operator in $L^2(\R^N)$ with the domain equal to $H^2(\R^N)$. Moreover, $-\Delta+V$ in $L^2(\R^N)$ is self-adjoint, its spectrum $\sigma(-\Delta+V)$ is bounded from below and the essential spectrum $\sigma_{ess} (-\Delta+V)$ is located in the interval $(\varrho(V_\infty),+\infty)$ where the so-called {\em asymptotic bottom} is given by
$$
\varrho(V_\infty):= \lim_{R\to +\infty} \essinf_{|x|>R} V_{\infty}(x).
$$
The spectrum of $-\Delta+V$ may have a nonempty part below the value $\varrho(V_\infty)$, which then contains eigenvalues only.  Therefore if $\varrho(V_\infty)>\lambda$, then 
$\sigma(-\Delta+V)\cap (-\infty, \lambda)$ consists of a finite number of eigenvalues (see Section \ref{sec-schr-op} for a more detailed discussion). 
This enables us to define the collective multiplicity of eigenvalues less than $\lambda$ as
\begin{equation*}
d^{-}(V,\lambda) :=\!\!\!\!\! \sum_{\mu \in \sigma (-\Delta+V) \cap (-\infty, \lambda)}\!\! \dim \Ker (-\Delta+V-\mu).
\end{equation*}

\indent Let us start with the so-called non-resonant case, i.e. when the linearizations of $f$ at zero and at infinity give potentials that do not resonate with the operator $-\Delta+V-\lambda$. We shall start with the following criterion on the existence of solutions of $(E)_\lambda$ and connecting orbits for the parabolic equations associated with $(E)_\lambda$ 
$$
u_t = \Delta u - V(x)u +\lambda u + f(x,u),\quad  x\in\R^N,\ t>0. \leqno{(P)_\lambda}
$$ 
\begin{theorem}\label{noneresonat-case-theorem}
Let us assume that $f$ satisfies $(f1)-(f3)$ and  there exist potentials $\alpha, \omega:\R^N\to\R$ satisfying (PT) such that
$$
\lim_{u\to\; 0} \frac{f(x,u)}{u}=\alpha (x) \quad\text{and}\quad
\lim_{|u|\to +\infty} \frac{f(x,u)}{u}=\omega (x)\quad \text{for a.e. $x\in\R^N$}.
$$
If $\lambda < \min\left\{ \varrho (V_\infty-a_\infty), 
\varrho(V_\infty-\alpha_\infty), \varrho  (V_\infty-\omega_\infty) \right\}$ is such that
$$
\lambda\not\in \sigma (-\Delta+V-\alpha) \cup \sigma(-\Delta+V-\omega)
\ \ \text{ and } \ \ d^-(V-\alpha,\lambda) \neq d^{-}(V-\omega,\lambda),
$$
then there exists a nonzero solution $\bar u\in H^2(\R^N)$ of $(E)_\lambda$ and a bounded solution $u:\R\to H^1(\R^N)$ of the equation $(P)_\lambda$ such that either $\bar u$ is in the $\alpha$-limit set of $u$ and 
$u(t)\to 0$ in $H^1(\R^N)$ as $t\to +\infty$, or 
$\bar u$ is in the $\omega$-limit set of $u$ and $u(t)\to 0$ in $H^1(\R^N)$ as $t\to -\infty$. Moreover, the set of all solutions to $(E)_\lambda$ is bounded in $H^1(\R^N)$.
\end{theorem}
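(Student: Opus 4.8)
The plan is to build the bounded full solution of $(P)_\lambda$ as a connecting orbit inside the semiflow generated by the parabolic problem, and to extract it by a Conley-index argument in the spirit of Rybakowski's theory for noncompact spaces. First I would verify that under $(K\!R)$, $(f1)$–$(f3)$ and the asymptotic-bottom conditions on $\lambda$, the right-hand side of $(P)_\lambda$ defines a (locally Lipschitz) vector field on $H^1(\R^N)$ generating a local semiflow $\Phi$, and that $(f3)$ together with $\lambda<\varrho(V_\infty-a_\infty)$ gives a bound guaranteeing global existence and, crucially, a bounded absorbing set: indeed $u\cdot f(x,u)\leq a(x)|u|^2$ turns the energy estimate into a differential inequality of the form $\tfrac12\tfrac{d}{dt}\|u\|_{L^2}^2 \leq -(\langle(-\Delta+V-a-\lambda)u,u\rangle)+\text{const}$, and the spectral gap $\lambda<\varrho(V_\infty-a_\infty)$ forces the quadratic form $\langle(-\Delta+V-a-\lambda)u,u\rangle$ to dominate a positive multiple of $\|u\|_{H^1}^2$ minus a compact perturbation. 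This simultaneously yields that the stationary set — the solutions of $(E)_\lambda$ — is bounded in $H^1(\R^N)$, which is the last assertion of the theorem.

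Next I would establish the admissibility (in Rybakowski's sense) of $\Phi$ on bounded sets. Here the Kato–Rellich decomposition $V=V_\infty+V_0$, $l=l_\infty+l_0$ and the assumptions $(PT)$ on $\alpha,\omega$ are exactly what make the tail-estimate technique work: one shows that orbits starting in a bounded set have uniformly small $H^1$-mass outside large balls $\{|x|>R\}$, so that a bounded solution sequence can be truncated and, via the compact embedding $H^1\hookrightarrow L^2$ on balls plus parabolic smoothing, be shown to have a convergent subsequence; this gives the strong admissibility needed to define the Conley index of isolated invariant sets for $\Phi$. I would phrase all of this by citing the compactness and spectral machinery of Sections \ref{sec-schr-op} and the surrounding sections, since the analytic content is exactly that developed there.

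With admissibility in hand, the topological core is a standard Conley-index continuation/degree-jump argument. The linearizations at $0$ and at infinity are $-\Delta+V-\alpha-\lambda$ and $-\Delta+V-\omega-\lambda$; because $\lambda\notin\sigma(-\Delta+V-\alpha)\cup\sigma(-\Delta+V-\omega)$ and $\lambda$ lies below the relevant asymptotic bottoms, both linearizations are hyperbolic with finite-dimensional unstable spaces of dimensions $d^-(V-\alpha,\lambda)$ and $d^-(V-\omega,\lambda)$ respectively. Thus the local Conley index of the stationary point $0$ is the pointed homotopy type of a sphere $S^{d^-(V-\alpha,\lambda)}$, while the Conley index of a large isolating ball $B$ containing the whole stationary set — computed by deforming $f$ to its linearization at infinity $\omega$ along a homotopy that stays admissible and keeps the invariant set inside $B$ (using $(f2)$, $(PT)$ and the asymptotic-bottom bound to preserve isolation) — is the homotopy type of $S^{d^-(V-\omega,\lambda)}$. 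If $0$ were the maximal invariant set in $B$ these two indices would coincide, contradicting $d^-(V-\alpha,\lambda)\neq d^-(V-\omega,\lambda)$; hence there is a nonzero full bounded orbit $u$ in $B$. The Morse-decomposition/attractor-repeller structure of the flow in $B$ (with $\{0\}$ as an attractor or repeller depending on the sign of the index shift) then forces $u$ to connect $0$ to the stationary set at the other end in time, which gives the two stated alternatives; by LaSalle-type invariance the corresponding $\alpha$- or $\omega$-limit set contains an equilibrium $\bar u\neq 0$ of $(E)_\lambda$.

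The main obstacle is the admissibility/compactness step: because the domain is all of $\R^N$, the semiflow is genuinely noncompact, and one must combine uniform tail estimates (exploiting $V_\infty$ having positive asymptotic bottom after subtracting $a_\infty,\alpha_\infty,\omega_\infty$, and the $L^p$/$L^r$ integrability of $V_0,l_0$) with parabolic regularization to recover just enough compactness for Rybakowski's Conley index to be defined and continuation-invariant along the homotopy to the linearization at infinity — and to be sure that this homotopy can be carried out without losing isolation. Everything else is a bookkeeping of indices once the spectral description from Section \ref{sec-schr-op} is invoked.
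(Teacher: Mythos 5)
Your overall architecture matches the paper's (index of $\{0\}$ from the limit potential $\alpha$, index of the maximal bounded invariant set from a homotopy to the limit potential $\omega$, admissibility via tail estimates, connecting orbit from the index mismatch plus the gradient structure), but two of your steps, as stated, would fail. First, $(f3)$ together with $\lambda<\varrho(V_\infty-a_\infty)$ does \emph{not} produce a bounded absorbing set, nor boundedness of the stationary set: that inequality only pushes the \emph{essential} spectrum of $-\Delta+V-a-\lambda$ above $0$, so the quadratic form $\langle(-\Delta+V-a-\lambda)u,u\rangle_{L^2}$ may still have finitely many negative directions (and possibly a kernel), and stationary solutions with large components in those directions are not excluded by $u\,f(x,u)\le a(x)|u|^2$. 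In the paper, $(f3)$ is used only for the tail estimates and admissibility (Proposition \ref{prop-tail}, Corollary \ref{cor-admissibility}); the boundedness of the set of solutions of $(E)_\lambda$, of $K_\infty(\Phi)$, and of the invariant sets along the homotopy to the linear flow with potential $\omega$ --- i.e.\ exactly the isolation you assert can be ``preserved using $(f2)$, $(PT)$ and the asymptotic-bottom bound'' --- comes from the nonresonance $\lambda\notin\sigma(-\Delta+V-\omega)$ via a rescaling (blow-up) argument: if these sets were unbounded, one normalizes $v_m=\rho_m^{-1}u_m$, uses the tail estimates and Rellich--Kondrachov to pass to a nontrivial bounded full solution of $\dot v=-Av+\omega v$, and the gradient-like structure of that limiting linear flow forces a nonzero element of $\Ker(-\Delta+V-\omega-\lambda)$, a contradiction (Theorem \ref{noresonant-index-formulae}(ii)). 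The same blow-up argument with $\alpha$ is what shows that $\{0\}$ is isolated and that $h(\Phi,K_0)=\Sigma^{d^-(V-\alpha,\lambda)}$; since $f$ is not assumed differentiable, ``hyperbolicity of the linearization at $0$'' cannot be invoked directly and must be extracted this way. Your proposal omits this a priori bound mechanism entirely, and it is the crux of the proof.

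Second, the connecting-orbit step: your claim that $\{0\}$ is ``an attractor or repeller depending on the sign of the index shift'' is unjustified --- the unstable dimension at $0$ is $d^-(V-\alpha,\lambda)$, which need be neither $0$ nor maximal --- and the Morse decomposition you invoke is never constructed. The paper instead applies the irreducibility theorem (Theorem \ref{rybakowski-irreducible}): $K_\infty(\Phi)$ is irreducible because its index is a pointed sphere, and since $\overline 0\neq h(\Phi,K_0)\neq h(\Phi,K_\infty(\Phi))\neq\overline 0$, there is a full solution $u$ in $K_\infty(\Phi)$ with $u(\R)\not\subset K_0$ and $\alpha(u)\subset K_0$ or $\omega(u)\subset K_0$; the Lyapunov functional of Proposition \ref{prop-fradc-d-c} then yields a nonzero equilibrium in the other limit set and convergence of $u(t)$ to $0$ at the appropriate end. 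With these two repairs your outline coincides with the paper's argument.
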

\noindent Theorem \ref{noneresonat-case-theorem} is an extension of the result by Prizzi -- see \cite[Cor. 3.4]{Prizzi-FM}; the difference is that less restrictive assumptions on $\alpha$ and $\omega$ are considered, no differentiability of $f$ is assumed and a more general potential $V$ is possible, i.e. satisfying $(KR)$ instead of $(PT)$.\\
\indent In the resonant case, i.e. when $-\Delta +V-\lambda$ has nonempty kernel we shall assume that $f$ satisfies $(f2)$ and is "bounded" in the sense that there is $m\in L^{2}(\R^{N})$ such that
$$ 
|f(x,u)|\leq m(x)\quad \text{ for a.e.  $x\in\R^N$ and all $u\in\R$}. \leqno{(f1)'}
$$
\noindent Here we shall impose additional assumptions on $f$ at infinity, the so-called {\em Landesman-Lazer type} and {\em strong resonance} conditions. The {\em Landesman-Lazer type} conditions state that either
$$
\left\{
\begin{array}{c}
\check{f}_+(x) \geq 0 \ \mbox{ and } \ \hat{f}_-(x)\leq 0 \ \mbox{ for a.e. } \ x\in\R^N,\\
\mbox{there is a set of positive measure on which none of}\;\; \check{f}_+ \mbox{ and } \hat{f}_- \mbox{vanishes},
\end{array}
\right.
\leqno{(LL)_+}
$$
or
$$
\left\{
\begin{array}{c}
\hat{f}_+(x)\leq 0 \ \mbox{ and } \ \check{f}_- (x)\geq 0 \ \mbox{ for a.e. } \ x\in\R^N,\\
\mbox{there is a set of positive measure on which none of }\; \hat{f}_+ \mbox{ and }  \check{f}_- \mbox{ vanishes,}
\end{array} \right.
\leqno{(LL)_-}
$$
where $\hat{f}_\pm(x):= \limsup_{s \to \pm\infty} f(x,s)$ and  $\check{f}_\pm (x):= \liminf_{s \to \pm\infty} f(x,s)$ for $x\in\R^N$. 
We remark that the continuation property \cite{Gossez} for the eigenvalues  of the Schr\"odinger operator implies that the conditions $(LL)_\pm$ represent a particular  form of the original resonance assumptions introduced by Landesman and Lazer in \cite{MR0267269} (see \cite[Remark 1.3]{Cw-Kr-2019} for more details).

The so-called {\em sign} or {\em strong resonance conditions} state
$$
\left\{
\begin{array}{c}
f (x,s)s \geq 0,  \mbox{ for a.e. } x\in \R^N \mbox{ and all } s\in\R,\\
\mbox{there is a set of positive measure on which } \check{k}_+  \mbox{ and }  \check{k}_- \mbox{ are positive,}
\end{array}
\right.
\leqno{(SR)_+}
$$
or
$$
\left\{
\begin{array}{c}
f (x,s)s \leq 0,  \mbox{ for a.e. } x\in \R^N \mbox{ and all } s\in\R,\\
\mbox{there is a set of positive measure on which }\hat{k}_+  \mbox{ and } \hat{k}_- \mbox{ are negative,}
\end{array}
\right.
\leqno{(SR)_-}
$$
where $\check{k}_\pm(x) = \liminf_{s\to \pm\infty} f(x,s)s$ and $\hat{k}_\pm(x) = \limsup_{s\to \pm\infty} f(x,s)s$ for $x\in\R^N$.\\
\indent In the above setting we obtain the following existence result in the resonant case, i.e. when $\lambda$ is in the point spectrum of $-\Delta+V$.
\begin{theorem}\label{30042019-1204}
Let us assume that $\lambda$ is an eigenvalue of the operator $-\Delta+V$ such that $\varrho \big(V_{\infty}\big) > \lambda$ and that $f$ satisfies $(f1)$, $(f1)'$ and $(f2)$. If one of conditions $(LL)_+$, $(LL)_-$, $(SR)_+$ or $(SR)_-$ is satisfied, then the equation $(E)_\lambda$ admits a solution $u\in H^2(\R^N)$ and the set of all solutions to $(E)_\lambda$ is bounded.
\end{theorem}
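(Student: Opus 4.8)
The plan is to realise $(E)_\lambda$ as the stationary problem for the parabolic semiflow $\Phi$ generated by $(P)_\lambda$ on $X=H^1(\R^N)$, to bound its stationary set by a Landesman--Lazer / strong-resonance argument, and to produce a stationary point through a non-vanishing Conley index. Write $A:=-\Delta+V-\lambda$, self-adjoint in $L^2(\R^N)$ with domain $H^2(\R^N)$. Since $\varrho(V_\infty)>\lambda$, the number $\lambda$ lies below $\sigma_{ess}(-\Delta+V)$, so $0$ is an isolated eigenvalue of $A$ of finite multiplicity; let $P_0$ be the orthogonal spectral projection onto $Z:=\Ker A$. Then $A$ restricts to an isomorphism of $(I-P_0)H^2(\R^N)$ onto $(I-P_0)L^2(\R^N)$ with bounded inverse, and $-A$ has an exponential dichotomy on $(I-P_0)X$. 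By $(f1)$, $(f1)'$, $(f2)$ and $(PT)$ the substitution operator $F(u):=f(\cdot,u)$ is a bounded Lipschitz map $H^1(\R^N)\to L^2(\R^N)$ with $\|F(u)\|_{L^2}\le\|m\|_{L^2}$; hence $\Phi$ is globally defined, admissible in Rybakowski's sense (here the tail estimates of the preceding sections and $\varrho(V_\infty)>\lambda$ enter) and, being a scalar reaction--diffusion semiflow, gradient with Lyapunov functional $E(u)=\tfrac12\langle Au,u\rangle-\int_{\R^N}\int_0^{u(x)}f(x,s)\,ds\,dx$. Finally, every $u\in H^1(\R^N)$ with $Au=F(u)$ automatically lies in $H^2(\R^N)$, so the stationary points of $\Phi$ are exactly the $H^2$-solutions of $(E)_\lambda$.

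I would first prove that the solution set $\mathcal E$ is bounded. Assume there are solutions $u_n$ with $\|u_n\|\to\infty$. Applying $I-P_0$ to $Au_n=F(u_n)$ and using the bounded inverse of $A$ on $(I-P_0)L^2(\R^N)$ gives $\|(I-P_0)u_n\|_{H^2}\le C\|m\|_{L^2}$, so $\|P_0u_n\|\to\infty$; since $\dim Z<\infty$, after a subsequence $\bar v_n:=P_0u_n/\|P_0u_n\|\to\bar v\in Z$, $\|\bar v\|=1$, and $u_n(x)/\|P_0u_n\|\to\bar v(x)$ a.e. By strong unique continuation for $-\Delta+V$, $\bar v\neq0$ a.e., so $u_n\to+\infty$ a.e.\ on $\{\bar v>0\}$, $u_n\to-\infty$ a.e.\ on $\{\bar v<0\}$, and $P_0u_n(x)/u_n(x)\to1$ there. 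As $F(u_n)\perp Z$, $\langle F(u_n),\psi\rangle=0$ for all $\psi\in Z$. Under $(LL)_+$ (resp.\ $(LL)_-$), taking $\psi=\bar v$ and applying Fatou's lemma on $\{\bar v>0\}$ and $\{\bar v<0\}$ --- with the $L^1$-domination $|f(x,u_n)\bar v|\le m|\bar v|$ and the sign/non-vanishing hypotheses on $\check f_\pm,\hat f_\pm$ --- forces $\liminf_n\langle F(u_n),\bar v\rangle>0$ (resp.\ $\limsup_n\langle F(u_n),\bar v\rangle<0$), a contradiction. Under $(SR)_+$ (resp.\ $(SR)_-$), taking $\psi=P_0u_n$: the sign condition $f(x,s)s\ge0$ (resp.\ $\le0$) makes $f(x,u_n)P_0u_n\ge0$ (resp.\ $\le0$) a.e.\ off a set of measure $\to0$, so from $\langle F(u_n),P_0u_n\rangle=0$ we obtain $f(x,u_n(x))P_0u_n(x)\to0$, hence $f(x,u_n(x))u_n(x)\to0$, a.e.\ on $\{\bar v\neq0\}$; then $\check k_\pm\le0$ (resp.\ $\hat k_\pm\ge0$) there, contradicting positivity (resp.\ negativity) on a set of positive measure. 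Hence $\mathcal E\subset B(0,\rho_0)$ for some $\rho_0$, and $\mathcal E$ is bounded in $H^2(\R^N)$.

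For existence I would run a Conley-index continuation. Fix a small $\varepsilon>0$ and, for $\tau\in[0,1]$, let $\Phi^\tau$ be the (admissible, gradient) semiflow of $u_t=-Au+\tau F(u)\pm(1-\tau)\varepsilon P_0u$, the sign chosen as $+$ under $(LL)_+$ and $(SR)_+$ and as $-$ under $(LL)_-$ and $(SR)_-$, so that $\Phi^1=\Phi$. At $\tau=0$ the generator $-A\pm\varepsilon P_0$ is hyperbolic, so $\mathrm{Inv}(B(0,R),\Phi^0)=\{0\}$ for every $R>0$, with Conley index the pointed sphere $\Sigma^{d^-(V,\lambda)}$ in the minus case and $\Sigma^{d^-(V,\lambda)+\dim Z}$ in the plus case. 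The decisive point is a uniform a priori bound: there is $\rho_1\ge\rho_0$ such that every full bounded solution of every $\Phi^\tau$ ($\tau\in[0,1]$) lies in $B(0,\rho_1)$; granting it, $N:=\overline{B(0,2\rho_1)}$ is an isolating neighbourhood for all $\tau$, so the continuation property of the Conley index yields $h(\mathrm{Inv}(N,\Phi))=h(\mathrm{Inv}(N,\Phi^0))=h(\{0\})$, a non-trivial pointed sphere; hence $\mathrm{Inv}(N,\Phi)\neq\emptyset$, and being a nonempty isolated invariant set of the admissible gradient semiflow $\Phi$ it contains a stationary point $\bar u$, i.e.\ an $H^2$-solution of $(E)_\lambda$. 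Together with the boundedness established above this proves the theorem.

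The hard part is the uniform a priori bound for full bounded solutions of the family $\{\Phi^\tau\}$. For stationary ones it is the previous argument carried out with the parameter present: projecting the equation onto $Z$ and pairing with $\bar v$ (for $(LL)$) or with $P_0u_n$ (for $(SR)$) gives $\pm(1-\tau_n)\varepsilon\langle P_0u_n,\bar v\rangle=-\tau_n\langle F(u_n),\bar v\rangle$ (respectively $\pm(1-\tau_n)\varepsilon\|P_0u_n\|^2=-\tau_n\langle F(u_n),P_0u_n\rangle$); since the right-hand side is controlled while $\|P_0u_n\|\to\infty$, necessarily $\tau_n\to1$, and the chosen sign of the perturbation forces the relevant pairing of $F(u_n)$ to tend to a value of the sign opposite to the one delivered by the resonance conditions --- a contradiction. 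For a non-stationary full bounded solution $u(\cdot)$ one splits $u=(I-P_0)u+P_0u$: the exponential dichotomy of $-A$ on $(I-P_0)X$ applied to the $L^2$-bounded forcing bounds $\|(I-P_0)u(t)\|$ uniformly in $t$ and $\tau$, while $p(t):=P_0u(t)$ obeys the finite-dimensional system $p'=\pm(1-\tau)\varepsilon p+\tau P_0F(u)$; rescaling time by the putative size of $p$ and passing to a limit via Arzel\`a--Ascoli reduces the matter to a bounded $Z$-valued curve whose norm-square is strictly monotone with derivative bounded away from zero --- the requisite definite growth rate coming, through Fatou's lemma, unique continuation and compactness of spheres in the finite-dimensional $Z$, from the resonance conditions together with the chosen sign --- which is impossible. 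Admissibility of the whole family $\{\Phi^\tau\}$ and the continuity required by the continuation property follow, as in the previous sections, from the tail estimates, uniformly in $\tau$.
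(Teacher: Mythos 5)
Your overall architecture (parabolic semiflow, admissibility via tail estimates, gradient-like structure to extract an equilibrium from a nonempty isolated invariant set, and a homotopy whose index at the trivial end is $\Sigma^{d^-(V,\lambda)}$ or $\Sigma^{d^-(V,\lambda)+\dim\Ker A}$) matches the paper, and your Landesman--Lazer boundedness argument for stationary solutions is essentially sound. But there are two genuine gaps, both at the points where the paper does its real work. First, the $(SR)_\pm$ boundedness step fails as written: from $\langle F(u_n),P_0u_n\rangle_{L^2}=0$ you cannot conclude $f(x,u_n(x))P_0u_n(x)\to 0$ a.e. The integrand is nonnegative only off the set where $u_n$ and $P_0u_n$ have opposite signs, and on that set the best available bound (using $|P_0u_n|\le|(I-P_0)u_n|$ there) makes its contribution merely bounded, not vanishing; so even for fixed $n$ you get no pointwise information, only $\limsup_n\int_{G_n}f(x,u_n)P_0u_n\le C$, hence at most $\int_{\{\bar v>0\}}\check{k}_++\int_{\{\bar v<0\}}\check{k}_-\le C$ --- which does not contradict $(SR)_+$, since that condition only asserts positivity of $\check{k}_\pm$ on a set of positive measure. (Normalizing by $\|P_0u_n\|$ does not help either: under strong resonance $f(x,u_n)$ may tend to $0$ pointwise, so $\langle F(u_n),\bar v\rangle\to 0$ is consistent.) The paper avoids this by invoking the uniform geometric estimate $\pm\langle v,F(v+w)\rangle_{L^2}>\alpha$ for $\|v\|_{L^2}\ge R_0$ (Proposition \ref{geometric-cond}, i.e.\ \cite[Lemma 5.2]{Cw-Kr-2019}), whose $(SR)$ case requires the set of $w$'s to be \emph{relatively compact in $L^2(\R^N)$}; you never establish that compactness --- your dichotomy/Duhamel argument only yields boundedness of the $Q$-components, whereas the paper proves the compactness separately (Proposition \ref{Q-boundedness}, via the tail estimates of Proposition \ref{classic_tail_estimates} and Kolmogorov--Riesz).

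Second, the step you yourself call the hard part --- the uniform a priori bound (equivalently, a common isolating neighborhood) for full bounded solutions of the whole family $\{\Phi^\tau\}$ --- is asserted rather than proved, and the sketch hides exactly the missing lemma: the claim that $\|P_0u(t)\|_{L^2}^2$ has derivative bounded away from zero once $\|P_0u(t)\|_{L^2}$ is large is precisely the uniform estimate $\pm\langle v,F(v+w)\rangle_{L^2}>\alpha$, uniform over all $Q$-components $w$ of full bounded solutions and over $\tau$; pointwise Fatou arguments at a fixed time do not deliver a bound that is uniform in $t$, $\tau$ and the solution, and in the $(SR)$ case this again needs the $L^2$-compactness of the $Q$-parts. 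Note also that your homotopy $-Au+\tau F(u)\pm(1-\tau)\varepsilon P_0u$ keeps kernel and range fully coupled for all $\tau\in(0,1]$, so you must control nonstationary full bounded solutions of the genuinely nonlinear problems; the paper's homotopy $G(u,s)=PF(Pu+sQu)+sQF(Pu+sQu)$ decouples at $s=0$ into the finite-dimensional flow $\dot v=PF(v)$ on $\Ker A$ times the linear flow, so the index computation reduces to an isolating block in $\Ker A$ (exit set the whole sphere or empty, by the same geometric estimate) wedge-multiplied by $\Sigma^{d^-}$ (Theorem \ref{18072019-0920}), and the boundedness of the solution set comes for free from $K_\infty(\Phi)\subset N$. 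To repair your proof you would need to prove (or import) Proposition \ref{geometric-cond} together with the $L^2$-compactness of the $Q$-components; at that point you are essentially reconstructing the paper's argument, and the $\pm\varepsilon P_0$ deformation becomes an unnecessary complication.
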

In case of the existence of a trivial solution we get the following criterion.
\begin{theorem} \label{30042019-1206}
Let $f$ be a map satisfying $(f1)$, $(f1)'$, $(f2)$, $(f3)$. Assume also that  $f(x,0)=0$ for a.e. $x\in\R^N$ and there exist a potential $\alpha$ satisfying $(PT)$ such that 
$$ 
\lim_{u\to\; 0} \frac{f(x,u)}{u} = \alpha(x) \quad \text{for a.e. $x\in\R^N$}.
$$ 
If $\lambda < \min\{ \varrho(V_\infty), \varrho (V_{\infty} - \alpha_\infty), \varrho(V_\infty-a_\infty) \}$ is an eigenvalue of 
$-\Delta+V$ such that one of the following conditions is fulfilled\\[2pt]
\noindent\makebox[7mm][r]{$(i)$} \parbox[t][][t]{166mm}{either $(LL)_+$ or $(SR)_+$ holds and $d^-(V,\lambda) +\dim \Ker (-\Delta+V-\lambda) \neq d^{-} (V-\alpha,\lambda)$,}\\[2pt]
\noindent\makebox[7mm][r]{$(ii)$} \parbox[t][][t]{166mm}{either $(LL)_-$ or $(SR)_-$ holds and $d^-(V,\lambda) \neq d^{-} (V-\alpha,\lambda)$,}\\[2pt]
then there exists a nonzero solution $\bar u\in H^2(\R^N)$ of $(E)_\lambda$ and a bounded solution $u:\R\to H^1(\R^N)$ of 
such that either $\bar u$ is in the $\alpha$-limit set of $u$ and  $u(t)\to 0$ in $H^1(\R^N)$ as $t\to +\infty$, or 
$\bar u$ is in the $\omega$-limit set of $u$ and 
$u(t)\to 0$ in $H^1(\R^N)$ as $t\to -\infty$. 
\end{theorem}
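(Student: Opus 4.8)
The plan is to run a Conley-index argument for the local semiflow $\Phi$ generated by $(P)_\lambda$ on $X:=H^1(\R^N)$, comparing its index ``at infinity'' with the index of the equilibrium $0$, and then to extract the asserted nonzero stationary solution together with the heteroclinic to $0$ from the gradient structure of $(P)_\lambda$. First I would recall, from the earlier sections, that $(P)_\lambda$ generates a local semiflow $\Phi$ on $X$ which, by the tail estimates and the Kato--Rellich structure of $V$, is admissible in the sense of Rybakowski on bounded subsets of $X$; since $f$ is bounded by $m\in L^2(\R^N)$ (condition $(f1)'$), the Nemytskii operator $u\mapsto f(\cdot,u)$ maps $X$ boundedly into $L^2(\R^N)$, so $\Phi$ is globally defined and dissipative. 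The structural point I would then isolate is that $\Phi$ is gradient-like: with $F(x,u):=\int_0^u f(x,s)\,\d s$ (finite for a.e.\ $x$ by $(f2)$, with $|F(x,u)|\le m(x)|u|$), the functional $E(u):=\tfrac12\|\nabla u\|_{L^2}^2+\tfrac12\int_{\R^N}(V-\lambda)u^2\,\d x-\int_{\R^N}F(x,u)\,\d x$ is finite on $X$ and strictly decreases along nonstationary solutions, so $\alpha$- and $\omega$-limit sets of bounded full solutions consist of equilibria and $\Phi$ admits no nonconstant homoclinic orbit.

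Next I would import the index computation already performed for Theorem~\ref{30042019-1204}. Using $(f1)'$, $(f3)$ and $\lambda<\varrho(V_\infty-a_\infty)$ one shows that the solution set of $(E)_\lambda$ is bounded in $X$ (by contradiction: rescaling a diverging sequence of solutions, passing to a limit in $\Ker(-\Delta+V-\lambda)$ via the compactness above, and contradicting whichever of $(LL)_\pm$, $(SR)_\pm$ is assumed), so that for $R$ large the ball $B=\{u\in X:\|u\|_X\le R\}$ is an isolating neighbourhood; its invariant part $K_\infty$ contains every solution of $(E)_\lambda$ and every bounded full solution of $(P)_\lambda$, and a homotopy to the asymptotic model gives $h(K_\infty)=\Sigma^{d^-(V,\lambda)+\dim\Ker(-\Delta+V-\lambda)}$ under $(LL)_+$ or $(SR)_+$ and $h(K_\infty)=\Sigma^{d^-(V,\lambda)}$ under $(LL)_-$ or $(SR)_-$, where $h$ denotes the homotopy Conley index and $\Sigma^k$ the pointed $k$-sphere. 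On the other side, since $f(x,0)=0$ and $f(x,u)/u\to\alpha(x)$ with $\alpha$ satisfying $(PT)$, $\lambda<\varrho(V_\infty-\alpha_\infty)$ and $\lambda\notin\sigma(-\Delta+V-\alpha)$, the equilibrium $0$ is hyperbolic, hence isolated, with $h(\{0\})=\Sigma^{d^-(V-\alpha,\lambda)}$. In case $(i)$ the hypothesis $d^-(V,\lambda)+\dim\Ker(-\Delta+V-\lambda)\ne d^-(V-\alpha,\lambda)$, and in case $(ii)$ the hypothesis $d^-(V,\lambda)\ne d^-(V-\alpha,\lambda)$, then yield $h(K_\infty)\ne h(\{0\})$; in particular $K_\infty\ne\{0\}$, and since $\Phi$ has no nonconstant homoclinic, some $\alpha$- or $\omega$-limit set of a point of $K_\infty\setminus\{0\}$ contains a nonzero equilibrium $\bar u$, which solves $(E)_\lambda$ and lies in $H^2(\R^N)$ by elliptic regularity.

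To finish, I would extract the connecting orbit by a dichotomy on whether $0$ is an attractor of $\Phi$ inside $K_\infty$. If it is not, then by hyperbolicity $0$ has nontrivial unstable manifold, which meets $K_\infty$ (using the dissipativity of $\Phi$) in a bounded full solution $u\not\equiv0$ with $\alpha(u)=\{0\}$, i.e.\ $u(t)\to0$ in $X$ as $t\to-\infty$; by the strict decrease of $E$ past $0$, $\omega(u)$ lies strictly below the level $E(0)$ and hence contains a nonzero equilibrium $\bar u$ --- the second alternative. If $0$ is an attractor in $K_\infty$, its dual repeller $R^*$ is nonempty (otherwise $K_\infty=\{0\}$); if there were no orbit connecting $R^*$ to $\{0\}$ we would have $K_\infty=\{0\}\sqcup R^*$ as a disjoint union of isolated invariant sets, so $h(K_\infty)=h(\{0\})\vee h(R^*)$ and $\widetilde H_*(h(K_\infty))$ would contain $\widetilde H_*(\Sigma^{d^-(V-\alpha,\lambda)})$ as a direct summand --- impossible, since $h(K_\infty)$ is a single pointed sphere of dimension different from $d^-(V-\alpha,\lambda)$. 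Hence there is a bounded full solution $u$ with $\omega(u)=\{0\}$, i.e.\ $u(t)\to0$ as $t\to+\infty$, and $\alpha(u)\subseteq R^*$; then $\alpha(u)$ contains a nonzero equilibrium $\bar u\in R^*$ --- the first alternative. The boundedness of the solution set of $(E)_\lambda$ is the one already established above.

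I expect the two delicate points to be the following. The first is the compactness input --- the Rybakowski admissibility of $\Phi$ via tail estimates, together with the boundedness of the stationary set under only the boundedness assumption $(f1)'$ and the resonance/sign conditions --- which, however, is precisely the machinery built for Theorem~\ref{30042019-1204} and can be quoted. The second, and the part specific to this statement, is combining the index inequality with the gradient structure so as to rule out the degenerate configurations (a homoclinic loop at $0$; or $K_\infty$ splitting off $\{0\}$ with no connection) and to decide, according to whether $0$ is an attractor in $K_\infty$, which of the two asserted alternatives occurs, all while checking that the nonzero equilibrium genuinely sits in the relevant limit set.
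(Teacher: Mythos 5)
Your overall scheme coincides with the paper's up to the last step: you compute $h(\Phi,K_0)=\Sigma^{d^-(V-\alpha,\lambda)}$ from the linearization at zero (Theorem~\ref{noresonant-index-formulae}(i)) and $h(\Phi,K_\infty(\Phi))=\Sigma^{k_\infty}$ from the resonant computation (Theorem~\ref{18072019-0920}), and the hypotheses (i)/(ii) give two distinct nonzero sphere indices. Where you diverge is the extraction of the connecting orbit: the paper simply invokes Rybakowski's irreducibility theorem (Theorem~\ref{rybakowski-irreducible}), noting that $K_\infty(\Phi)$ is irreducible because its index is a pointed sphere, whereas you attempt a hand-made attractor--repeller dichotomy inside $K_\infty(\Phi)$. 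Your Case~B (zero is an attractor relative to $K_\infty(\Phi)$: no connection would force $K_\infty(\Phi)=\{0\}\sqcup R^*$ and $h(K_\infty)=h(\{0\})\vee h(R^*)$, contradicted in reduced homology) is essentially the proof of the irreducibility theorem in this special case and is sound, modulo checking that $R^*$ inherits an admissible isolating neighborhood from $N$, which it does.

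The genuine gap is in Case~A. You argue that if $0$ is not an attractor in $K_\infty(\Phi)$ then its unstable manifold ``meets $K_\infty$ (using the dissipativity of $\Phi$) in a bounded full solution $u\not\equiv 0$ with $\alpha(u)=\{0\}$''. But $\Phi$ is not dissipative in this setting: the operator $-A$ has finitely many positive eigenvalues (exponentially growing directions in $X_-$), and precisely because $\lambda$ is resonant the kernel component can escape --- indeed the geometric condition \eqref{geom-cond2} shows that under $(LL)_+$/$(SR)_+$ the quantity $\|Pu(t)\|_{L^2}^2$ grows without bound once it reaches $R_0$. Consequently a nonzero point of the local unstable manifold of $0$ need not have a bounded forward orbit, and nothing guarantees a priori that $W^u_{loc}(0)\setminus\{0\}$ intersects $K_\infty(\Phi)$ at all; the step as written fails. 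It can be repaired without dissipativity: if $\{0\}$ is not an attractor relative to the compact set $K_\infty(\Phi)$, take solutions in $K_\infty(\Phi)$ starting at points $x_n\to 0$ whose orbits leave a fixed isolating neighborhood of $\{0\}$, shift by the (necessarily divergent) exit times, and use compactness of $K_\infty(\Phi)$ plus invariance of $\alpha$-limits to produce a nonconstant full solution $v$ in $K_\infty(\Phi)$ with $\alpha(v)=\{0\}$, after which your energy argument gives a nonzero equilibrium in $\omega(v)$; alternatively, one can avoid the dichotomy altogether and quote \cite[Theorem 1.11.5]{rybakowski} as the paper does. A minor further remark: your sketch of the boundedness/isolation of $K_\infty(\Phi)$ by rescaling is not how the paper proceeds (under $(f1)'$ the rescaled nonlinearity vanishes in the limit, so landing in $\Ker(-\Delta+V-\lambda)$ is not yet a contradiction with $(LL)_\pm$/$(SR)_\pm$); the paper instead uses the Duhamel bound on $Qu$ together with the geometric condition on $Pu$, but since you explicitly quote that machinery (Theorem~\ref{18072019-0920}), this does not affect the argument.
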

\noindent As an immediate consequence we get the following straightforward criterion. 
\begin{corollary}\label{corollary-resonance}
Suppose that $V$, $f$ and $\lambda$ are as in Theorem \ref{30042019-1206} and, additionally, that $\alpha$ is a constant function equal to $\bar \alpha\in\R$. 
If one of the following conditions is satisfied\\[2pt]
\noindent\makebox[7mm][r]{$(i)$} \parbox[t][][t]{166mm}{if either $(LL)_+$ or $(SR)_+$ holds and $\bar\alpha<0$,}\\[2pt]
\noindent\makebox[7mm][r]{$(ii)$} \parbox[t][][t]{166mm}{if either $(LL)_-$ or $(SR)_-$ holds and $\bar\alpha>0$,}\\[2pt]
\noindent\makebox[7mm][r]{$(iii)$} \parbox[t][][t]{166mm}{if either $(LL)_+$ or $(SR)_+$ holds, $\bar\alpha>0$ and $\sigma(-\Delta+V)\cap (\lambda,\bar\alpha + \lambda)\neq \emptyset$,}\\[2pt]
\noindent\makebox[7mm][r]{$(iv)$} \parbox[t][][t]{166mm}{if either $(LL)_-$ or $(SR)_-$ holds, $\bar\alpha<0$ and $\sigma(-\Delta+V)\cap (\bar\alpha+\lambda,\lambda)\neq \emptyset$,}\\[2pt]
then the assertion of Theorem \ref{30042019-1206} holds.
\end{corollary}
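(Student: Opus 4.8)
\emph{Proof plan.} The plan is to obtain Corollary~\ref{corollary-resonance} directly from Theorem~\ref{30042019-1206} by translating the spectral hypotheses on $-\Delta+V$ into the counting hypotheses $(i)$--$(ii)$ of that theorem. All assumptions of Theorem~\ref{30042019-1206} except its dimension inequality are granted verbatim, so the only thing to check is that inequality. First I would record that a constant $\alpha\equiv\bar\alpha$ forces the decomposition $\alpha_\infty=\bar\alpha$, $\alpha_0\equiv0$ (a nonzero constant is not in $L^r(\R^N)$), whence $\varrho(V_\infty-\alpha_\infty)=\varrho(V_\infty)-\bar\alpha$; in particular the standing hypothesis $\lambda<\varrho(V_\infty-\alpha_\infty)$ reads $\lambda+\bar\alpha<\varrho(V_\infty)$, which guarantees that $d^-(V,\lambda+\bar\alpha)$ is a well-defined finite number, since below the asymptotic bottom the spectrum consists of finitely many eigenvalues (Section~\ref{sec-schr-op}).

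The only computation is the behaviour of $d^-$ under an additive constant shift. Because $-\Delta+V-\alpha=(-\Delta+V)-\bar\alpha\,\mathrm{Id}$, we have $\sigma(-\Delta+V-\alpha)=\sigma(-\Delta+V)-\bar\alpha$ and $\Ker(-\Delta+V-\alpha-\mu)=\Ker\big(-\Delta+V-(\mu+\bar\alpha)\big)$ for every $\mu$, so reindexing the defining sum yields
$$
d^-(V-\alpha,\lambda)=\!\!\!\sum_{\nu\in\sigma(-\Delta+V)\cap(-\infty,\,\lambda+\bar\alpha)}\!\!\!\dim\Ker(-\Delta+V-\nu)=d^-(V,\lambda+\bar\alpha).
$$
Consequently $(i)$ of Theorem~\ref{30042019-1206} becomes $d^-(V,\lambda)+k\neq d^-(V,\lambda+\bar\alpha)$ and $(ii)$ becomes $d^-(V,\lambda)\neq d^-(V,\lambda+\bar\alpha)$, where I abbreviate $k:=\dim\Ker(-\Delta+V-\lambda)\geq1$, positive because $\lambda$ is an eigenvalue.

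Then I would dispatch the four cases using monotonicity of $\nu\mapsto d^-(V,\nu)$ together with the fact that an eigenvalue is counted with multiplicity at least $1$. If $\bar\alpha<0$ then $\lambda+\bar\alpha<\lambda$, so $d^-(V,\lambda+\bar\alpha)\leq d^-(V,\lambda)<d^-(V,\lambda)+k$, which is $(i)$ of Theorem~\ref{30042019-1206}; this settles case $(i)$. If $\bar\alpha>0$ then $\lambda\in(-\infty,\lambda+\bar\alpha)$, so $d^-(V,\lambda+\bar\alpha)\geq d^-(V,\lambda)+k>d^-(V,\lambda)$, which is $(ii)$; this settles case $(ii)$. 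For case $(iii)$ ($\bar\alpha>0$ and some $\mu_0\in\sigma(-\Delta+V)\cap(\lambda,\lambda+\bar\alpha)$), that interval lies below $\varrho(V_\infty)$, so $\mu_0$ is an eigenvalue with $\dim\Ker(-\Delta+V-\mu_0)\geq1$, whence $d^-(V,\lambda+\bar\alpha)\geq d^-(V,\lambda)+k+1>d^-(V,\lambda)+k$, i.e.\ $(i)$. Symmetrically, in case $(iv)$ ($\bar\alpha<0$ and some eigenvalue $\mu_0\in\sigma(-\Delta+V)\cap(\lambda+\bar\alpha,\lambda)$, again below $\varrho(V_\infty)$) one gets $d^-(V,\lambda)\geq d^-(V,\lambda+\bar\alpha)+1>d^-(V,\lambda+\bar\alpha)$, i.e.\ $(ii)$. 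In every case the relevant hypothesis of Theorem~\ref{30042019-1206} holds, and its conclusion is precisely the assertion of Corollary~\ref{corollary-resonance}.

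I do not expect a genuine obstacle here: the corollary is a direct reading of Theorem~\ref{30042019-1206}. The only points requiring care are bookkeeping ones — keeping every $d^-(V,\cdot)$ finite (which is exactly why $\lambda<\varrho(V_\infty-\alpha_\infty)$ is assumed), invoking that below the asymptotic bottom the spectrum is a finite set of eigenvalues so that a nonempty spectral intersection with a subinterval contributes a strictly positive amount to the collective multiplicity, and using that the single eigenvalue $\lambda$ already contributes $k\geq1$, which is what makes all the inequalities strict.
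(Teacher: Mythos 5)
Your proposal is correct and follows essentially the same route as the paper: both rest on the shift identity $d^-(V-\alpha,\lambda)=d^-(V,\bar\alpha+\lambda)$ and then verify, case by case, hypothesis $(i)$ or $(ii)$ of Theorem \ref{30042019-1206} using monotonicity of $d^-$ and the fact that $\lambda$ (and, in cases $(iii)$--$(iv)$, the extra eigenvalue in the stated interval, which lies below $\varrho(V_\infty)$) contributes multiplicity at least one. The paper phrases the comparison in terms of the index exponent $k_\infty$, but the inequalities are identical to yours.
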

\noindent Theorem \ref{30042019-1206} provides the existence of nontrivial solution for $(E)_\lambda$. It is showed in \cite{Cw-Kr-2019} that under the Landesman-Lazer or strong resonance conditions, if $\lambda=\lambda_0$ is an isolated eigenvalue (as in Theorem \ref{30042019-1204} or Theorem \ref{30042019-1206}), then $\lambda_0$ is an asymptotic bifurcation point (or a point of bifurcation from infinity) for the parameterized family of equations $(E)_\lambda$. In this context our results show that the large solutions for the equation $(E)_\lambda$, with respect to the $H^1(\R^N)$ norm, appear also in the case $\lambda = \lambda_0$.\\
\indent Our approach is based on the Rybakowski version of Conley index theory for semiflows on non-compact spaces -- see \cite{rybakowski} and \cite{rybakowski-TAMS}. In this respect we follow the pioneering work \cite{rybakowski-1985} as well as \cite{Prizzi-FM}, \cite{Kokocki}, \cite{Kokocki1} or \cite{Cw-Maciejewski}, where the techniques involving irreducible sets to problems on bounded domains were studied. We also refer the reader to the recent paper \cite{rybakowski-new} where the irreducibility method were used to find specific full nonconstant solutions for the partial functional differential equation containing delay terms.

Here we consider the semiflow $\Phi$ on the space $H^1(\R^N)$ generated by the nonlinear equation
\begin{equation}\label{abstract-nonlinear-eq}
\dot u(t) = - A u(t) + F(u(t)), \quad t>0,
\end{equation}
with $A:=- \Delta + V-\lambda$ and the Nemytskii operator $F:H^1(\R^N)\to L^2 (\R^N)$ determined by $f$. The admissibility property of the semiflow $\Phi$ and related homotopies, that are necessary for application of Conley index, is based on tail estimate techniques (see e.g. \cite{Schmitt_Wang} and \cite{Prizzi-FM}). In order to compute the Conley indices at resonance, we shall exploit, after \cite{Kokocki} (where bounded domains are studied) and \cite{Cw-Kr-2019} (bifurcations for problems in $\R^N$), the fact that both Landesman-Lazer and sign conditions have geometric consequences in the phase space of the parabolic equation \eqref{abstract-nonlinear-eq}. We show the boundedness of the maximal invariant set $K_\infty(\Phi)$, consisting of all $\bar u\in H^1(\R^N)$ such that there exists a full bounded solution of \eqref{abstract-nonlinear-eq} with $u(0)=\bar u$. Next we compute the Conley indices of the set $K_\infty(\Phi)$ and the index of the isolated trivial (zero) solution. This allows us to deduce the existence of a nonzero full solution of \eqref{abstract-nonlinear-eq}, which by the gradient-like structure of the semiflow $\Phi$ gives the existence of nontrivial solution of $(E)_\lambda$.\\
\indent The paper is organized as follows. Section 2 provides basic facts on Conley-Rybakowski index theory. In Section 3 we recall basic properties of the operator $-\Delta+V$ and derive an inequality that we shall use for tail estimates. Section 4 deals with the definition and properties of the semiflow $\Phi$ determined by $(P)_\lambda$ and its parameterized variants. In particular tail estimates are used to show the admissibility of bounded subsets of the phase space $H^1(\R^N)$.
Section 5 is devoted to the proof of Theorem \ref{noneresonat-case-theorem}, which is based on the mentioned Conley index formulas and the properties of irreducible sets. Finally in Section 6 we shall use the geometric implications of assumptions $(LL)_\pm$ and $(SR)_\pm$ in order to prove Theorems \ref{30042019-1204} and \ref{30042019-1206} as well as Corollary \ref{corollary-resonance}.


\section{Preliminaries on Conley index due to Rybakowski}
In this section we will briefly recall a version of Conley index for the non-compact metric spaces due to Rybakowski (see \cite{rybakowski} or \cite{rybakowski-TAMS}). Let $\Phi\colon [0,+\infty)\times X\to X$ be a  semiflow on a complete metric space $X$. 
A continuous map $u\colon J\to X$, where $J\subset \R$ is an interval, is a {\em solution of $\Phi$} if $u(t+s) = \Phi_t(u(s))$ for all $t\geq 0$ and $s\in J$ such that $t+s\in J$. 
Given $N\subset X$, we define the \emph{invariant part} of the set $N$ by
$$\mathrm{Inv}_\Phi (N) := \{x\in X \ | \ \text{there is a solution}\;u\colon \R\to N\;\text{such that}\;u(0)=x\}.$$
We say that a set $K\subset X$ is a {\em $\Phi$-invariant} or  {\em invariant} (w.r.t.\ $\Phi$) if  $\mathrm{Inv}_\Phi (K)=K$. If there exists an {\em isolating neighborhood} of $K$, i.e. the set $N\subset X$ such that  $K=\mathrm{Inv}_\Phi (N) \subset \mathrm{int}\,N$, then  $K$ is called an {\em isolated invariant} set.\\
\indent  A set $N\subset X$ is \emph{$\Phi$-admissible} or {\em admissible} (w.r.t. $\Phi$) if, for any sequences $(t_n)$ in $[0,+\infty)$, $(x_n)$ in $X$ such that $t_n\to +\infty$ and $\{\Phi(t,x_{n}) \ | \ t\in[0,t_{n}]\}\subset N$, the sequence of the end-points $\left( \Phi_{t_n}(x_n) \right)$ has a convergent subsequence.\\
\indent Suppose that $\{\Phi^{(s)}\}_{s\in \Lambda}$, where $\Lambda$ is a metric space, is a family of semiflows on $X$ such that the map $[0,+\infty)\times X\times \Lambda\ni (t,x,s)\mapsto\Phi^{(s)}_t(x)$ is continuous. A set $N\subset X$ is
{\em admissible} with respect to the family of semiflows $\{\Phi^{(s)}\}_{s\in\Lambda}$ if, for any sequences $(t_n)$ in $[0,+\infty)$, $(x_n)$ in $X$ and $(s_n)$  such that $t_n\to +\infty$, $s_n\to s_0$ in $\Lambda$ and $\{\Phi^{(s_n)}(t,x_{n}) \ | \ t\in [0,t_n]\}\subset N$, the sequence $(\Phi^{(s_n)}_{t_n}(x_n))$ has a convergent subsequence.\\
\indent Let ${\mathcal I}(X)$ be the family of all pairs $(\Phi, K)$ such that $\Phi$ is a semiflow on $X$ and $K\subset X$ is an isolated invariant set   having an admissible isolating neighborhood w.r.t.\ $\Phi$. Given $(\Phi,K)\in {\mathcal I}(X)$, we can choose a pair of closed sets $(B,B^-)$, where $B$ is an isolating block of $K$ relative to $\Phi$ and $B^-$ is its exit set (see \cite{rybakowski}). Then the Conley {\em homotopy index} $h(\Phi,K)$ of $K$ relative to $\Phi$ is defined as the homotopy type of the quotient space
$$h(\Phi, K):=[(B/B^-, [B^-])],$$
provided $B^-$ is a nonempty set. If $B^-=\emptyset$ then we put
$h(\Phi, K):=[(B\,\dot\cup\, \{ a\}, a)]$, where $\{a\}$ is the one point space disjoint with $B$. In particular, $h(\Phi, \emptyset)=\overline 0$, where $\overline 0:=[(\{a\},a)]$.

Let us enumerate several important properties of the homotopy index.
\begin{enumerate}
	\item[(H1)]
	For any $(\Phi, K)\in {\mathcal I}(X)$, if $h(\Phi, K)\neq \overline{0}$, then $K\neq \emptyset$;
	\item[(H2)] if $(\Phi, K_1), (\Phi, K_2)\in {\mathcal I}(X)$ and $K_1\cap K_2=\emptyset$, then $(\Phi, K_1\cup K_2)\in {\mathcal I}(X)$
	and $h(\Phi, K_1\cup K_2) = h(\Phi, K_1)\vee h(\Phi, K_2)$;
	\item[(H3)] for any $(\Phi_1, K_1)\in {\mathcal I}(X_1)$ and $(\Phi_2, K_2)\in {\mathcal I}(X_2)$, $(\Phi_1\times \Phi_2, K_1\times K_2)\in {\mathcal I} (X_1\times X_2)$
	and $h(\Phi_1\times \Phi_2, K_1\times K_2) = h(\Phi_1, K_1) \wedge h(\Phi_2, K_2)$;
	\item[(H4)] if the family of semiflows $\{\Phi^{(s)}\}_{s \in [0,1]}$ is continuous and there exists an admissible (with respect to this family) $N$ such that $K_s = \mathrm{Inv}_{\Phi^{(s)}} (N) \subset \mathrm{int}\ N$ for $s\in [0,1]$, then
$$h(\Phi^{(0)}, K_0) = h(\Phi^{(1)}, K_1).$$
\end{enumerate}
We recall that, if $u\colon \R\to X$ is a solution of the semiflow $\Phi$, then the {\em $\omega$-limit set} of $u$ is defined by
$$\omega(u):=\{x=\lim_{n\to\infty} u(t_n)\mid \text{for some } \ t_n\to +\infty\}.$$
Similarly, the {\em $\alpha$-limit set} of $u$ is defined by
$$\alpha (u):=\{x=\lim_{n\to\infty} u(t_n)\mid \text{for some }\ t_n\to -\infty\}.$$
It is known that if $N\subset X$ is $\Phi$-admissible and $u:\R\to N$ is a solution of $\Phi$, then $\alpha(u)$ and $\omega(u)$ are both nonempty compact invariant sets. \\
\indent An isolated invariant with respect to the semiflow $\Phi$ set $K$ is called {\em irreducible} if there are no isolated invariant sets $K_1$ and $K_2$ such that $K=K_1 \cup K_2$, $K_1\cap K_2=\emptyset$  and
$$
h(\Phi, K_1)\neq \overline 0 \quad\text{ and }\quad h(\Phi, K_2)\neq \overline 0.
$$
It is known that if $K$ is connected or $h(\Phi, K)=\overline{0}$ or $h(\Phi, K)=\Sigma^k$, where $k\geq 0$ is an integer \!\footnote{We recall that $\Sigma^k:=[(S^k,s_0)]$ is the homotopy type of the pointed $k$-dimensional sphere $S^{k}:=\{x\in\R^{k+1} \ | \ |x|=1\}$.}, then $K$ is irreducible (see \cite[Theorem 1.11.6]{rybakowski}).
\begin{theorem}{\em (see \cite[Theorem 1.11.5]{rybakowski})} \label{rybakowski-irreducible}
Assume that $K_0\subset K\subset X$ is a pair of the isolated invariant sets such that $K$ is irreducible and
$$
\overline 0 \neq h(\Phi, K_0)\neq h(\Phi, K)\neq \overline 0.
$$
Then there exists a full solution $u:\R\to K$ such that $u(\R)\not\subset K_0$ and either $\alpha(u)\subset K_0$ or $\omega(u)\subset K_0$.
\end{theorem}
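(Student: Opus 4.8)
The plan is to argue by contradiction: suppose that no full solution $u\colon\R\to K$ with $u(\R)\not\subset K_0$ satisfies $\alpha(u)\subset K_0$ or $\omega(u)\subset K_0$; I will then split $K$ into two disjoint isolated invariant sets with nontrivial Conley indices, which is impossible since $K$ is irreducible.

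The central step is to show that, under this assumption, $\dist(K_0,K\setminus K_0)>0$. Suppose not, and pick $x_n\in K\setminus K_0$ with $x_n\to x_\ast\in K_0$ together with full solutions $u_n\colon\R\to K$, $u_n(0)=x_n$ (these exist since $K=\mathrm{Inv}_\Phi(N)$ for an admissible isolating neighborhood $N$ of $K$, which also makes $K$ compact). Using compactness of $K$, continuity of $\Phi$, and a diagonal argument, pass to a subsequence along which $u_n$ converges uniformly on compact time intervals to a full solution $u_\ast\colon\R\to K$ with $u_\ast(0)=x_\ast$. Since $x_\ast\in K_0$ and $K_0$ is invariant, the forward orbit of $u_\ast$ lies in $K_0$, so $\omega(u_\ast)\subset K_0$; if $u_\ast(\R)\not\subset K_0$ this is an excluded connecting solution, so in fact $u_\ast(\R)\subset K_0$. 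Now fix a closed isolating neighborhood $N_0$ of $K_0$ with $K_0=\mathrm{Inv}_\Phi(N_0)\subset\mathrm{int}\,N_0$. As $u_\ast(\R)\subset K_0\subset\mathrm{int}\,N_0$ and $u_n\to u_\ast$ on compacta, after a further subsequence we may assume $u_n([-n,n])\subset N_0$ for all $n$; but $u_n(0)=x_n\notin K_0=\mathrm{Inv}_\Phi(N_0)$, so each $u_n$ leaves $N_0$ at some time $t$ with $|t|>n$. Passing once more to a subsequence, these exit times are either all positive or all negative; in the positive case, let $\sigma_n$ be the first forward time at which $u_n$ leaves $N_0$, so $u_n([0,\sigma_n])\subset N_0$, $\sigma_n\to+\infty$, and $u_n(\sigma_n)\notin\mathrm{int}\,N_0$. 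Applying the same compactness to the recentered solutions $v_n(t):=u_n(t+\sigma_n)$ gives a subsequential limit $v_\ast\colon\R\to K$, a full solution with $v_\ast((-\infty,0])\subset N_0$ and $v_\ast(0)\notin\mathrm{int}\,N_0$. The former forces the compact invariant set $\alpha(v_\ast)$ into $\mathrm{Inv}_\Phi(N_0)=K_0$, while the latter gives $v_\ast(0)\notin K_0$, hence $v_\ast(\R)\not\subset K_0$ — again an excluded connecting solution. (The case of negative exit times is symmetric, recentering at the first backward exit time and obtaining $\omega(v_\ast)\subset K_0$ instead.) This contradiction establishes $\dist(K_0,K\setminus K_0)>0$.

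With the separation in hand, set $K_1:=K\setminus K_0$. Then $K_1$ is open and closed in the compact set $K$, hence compact; and since for a full solution in $K$ the set of times at which it lies in $K_0$ is open and closed in $\R$, every full solution through a point of $K_1$ stays in $K_1$, so $K_1$ is $\Phi$-invariant. Taking $\delta=\tfrac13\dist(K_0,K_1)$, the closed sets $N_i:=N\cap\{x\in X:\dist(x,K_i)\leq\delta\}$ are isolating neighborhoods of $K_i$ — one verifies $K_i=\mathrm{Inv}_\Phi(N_i)\subset\mathrm{int}\,N_i$ — and are admissible, being closed subsets of the admissible set $N$; hence $(\Phi,K_0),(\Phi,K_1)\in\mathcal{I}(X)$. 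Since $K_0\cap K_1=\emptyset$ and $K_0\cup K_1=K$, property (H2) yields $h(\Phi,K)=h(\Phi,K_0)\vee h(\Phi,K_1)$, and irreducibility of $K$ applied to this splitting forces $h(\Phi,K_0)=\overline 0$ or $h(\Phi,K_1)=\overline 0$. The former is ruled out by hypothesis, so $h(\Phi,K_1)=\overline 0$, whence $h(\Phi,K)=h(\Phi,K_0)\vee\overline 0=h(\Phi,K_0)$, contradicting $h(\Phi,K)\neq h(\Phi,K_0)$. This contradiction produces the full solution claimed in the theorem.

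The step I expect to be the main obstacle is the separation $\dist(K_0,K\setminus K_0)>0$: it has to combine the compactness supplied by admissibility, the isolating-neighborhood property of $K_0$, and the no-connecting-orbit hypothesis (used once for the limit solution $u_\ast$ and once for the recentered limit $v_\ast$) into a single contradiction. The remaining ingredients — invariance of $K\setminus K_0$, the inheritance of admissibility by closed subsets, and the index bookkeeping through (H2) and irreducibility — should be routine.
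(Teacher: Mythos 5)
Your argument is correct. Note that the paper itself gives no proof of this statement: it is quoted verbatim from Rybakowski's book (Theorem I.11.5), so there is no ``paper proof'' to compare against; what you have written is a correct self-contained reconstruction along the standard lines, namely: assume no connecting orbit, show $K_{0}$ and $K\setminus K_{0}$ are separated, obtain a disjoint decomposition of $K$ into two isolated invariant sets with admissible isolating neighborhoods, and contradict irreducibility via (H2) and $h\vee\overline 0=h$. The separation step is carried out carefully and correctly (the recentering at exit times of $N_{0}$ is exactly the right device, and $\alpha(v_\ast)\subset\mathrm{Inv}_\Phi(N_0)=K_0$ is legitimate because the backward tail lies in the closed set $N_0$). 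Two small points of presentation: (1) your parenthetical claim that for a full solution in $K$ the set $\{t: u(t)\in K_0\}$ is \emph{open} is not a general topological fact --- forward openness follows from positive invariance of $K_{0}$ (via forward uniqueness of the semiflow), but backward openness uses the standing no-connecting-orbit hypothesis: if the set were a proper nonempty subset it would be a half-line $[t_0,\infty)$ and $u$ would be a forbidden orbit with $\omega(u)\subset K_0$; since that hypothesis is in force throughout your contradiction argument the conclusion you draw (invariance of $K\setminus K_0$) is valid, but you should say explicitly that the hypothesis is being used there. (2) You implicitly use the standard facts that an isolated invariant set possessing an admissible isolating neighborhood is compact and that limits of full solutions in a compact invariant set (constructed by the diagonal argument you describe) are again full solutions, as well as invariance of $\alpha$- and $\omega$-limit sets; these are all part of Rybakowski's framework that the paper also takes for granted, so relying on them is appropriate.
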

Recall that a continuous function $\mathcal{E}:X\to\R$ is called a {\em Liapunov functional} for the semiflow $\Phi$ if  for any 
$x\in X$, the function $t \mapsto \mathcal{E} (\Phi_t(x))$ is nonincreasing on $(0,+\infty)$. The semiflow $\Phi$ is called {\em gradient-like} with respect to $\mathcal{E}$ if $\mathcal{E}$ is a Lyapunov functional for $\Phi$ and, for any nonconstant solution $u:\R\to X$ of $\Phi$, the function $t\mapsto \mathcal{E}(\Phi_t(x))$ is not constant. It is known that if $\Phi$ is gradient-like with respect to some Lyapunov functional for $\Phi$, then, for any solution $u: \R\to X$ of  $\Phi$ with relatively compact $u(\R)$, the sets $\alpha(u)$ and $\omega(u)$ are nonempty and disjoint sets containing only equilibria of $\Phi$, i.e. elements $x\in X$ such that $\Phi_t(x)=x$ for all $t\geq 0$ (see \cite[Theorem 2.5.4]{rybakowski}).

\section{Properties of the Schr\"odinger operator}\label{sec-schr-op}
Let us consider an operator $A$ on the space $X:=L^2(\R^N)$ given by the formula
\begin{equation}\label{def-a}
A u: = -\Delta u +V u - \lambda u,  \quad \ u\in D(A):=H^2(\R^N),
\end{equation}
where $\lambda< \varrho(V_\infty)$.
Clearly $A$ is a perturbation of $A_0:H^2(\R^N) \to L^2(\R^N)$, given by $A_0 u:=-\Delta u$, $u\in H^2(\R^N)$, by the multiplication operator $V: H^2(\R^N) \to L^2(\R^N)$. It is known that $A_0$ is self-adjoint and $-A_0$ generates an analytic $C_0$-semigroup of bound linear operators on $X$. Since the potential $V$ is of Kato-Rellich type
we infer that $H^2(\R^N)$ continuously embeds into $L^q(\R^N)$ for $q=+\infty$ if $N=1,2,3$,  for arbitrary $q\geq 2$ if $N=4$ and $q=2N/(N-4)$ if $N\geq 5$ (see \cite[Corollary 9.13]{MR2759829}). This implies that the multiplication operator $V$ is well-defined and $A_0$-bounded with zero $A_0$-bound, i.e., for any $\varepsilon>0$, there exists $C_\varepsilon>0$ such that
$$
\| V u \|_{L^2} \leq \varepsilon \| A_0 u\|_{L^2}+ C_\varepsilon \|u\|_{L^2} \quad \text{for all } \ u\in H^2(\R^N).
$$
Therefore, by the Rellich-Kato theorem (see e.g. \cite[Th. 13.5]{Hislop-Sigal}), the perturbed operator $A=A_0+V-\lambda I$ is self-adjoint and, by \cite[Th. 2.10]{Engel}, the operator $-A$ generates an analytic $C_0$-semigroup $\{S(t)\}_{t\geq 0}$. Moreover, it appears that the multiplication operator $V_0:H^2(\R^N)\to L^2(\R^N)$ is $(A_0+V_\infty)$-compact, which due to the Weyl theorem (see e.g. \cite[Th. 14.6]{Hislop-Sigal}) implies that the essential spectra of $A_0+V_\infty$ and $A$ are equal, i.e.
$$
\sigma_{ess} (-\Delta+V-\lambda) = \sigma_{ess}(-\Delta+V_\infty-\lambda).
$$
By the Persson formula (see e.g. \cite[Th. 14.11]{Hislop-Sigal} or \cite{Persson}) for the bottom of essential spectrum we get 
\begin{align*}
\inf\sigma_{\mathrm{ess}}(-\Delta+V_\infty) = \sup_{R>0}\ \inf\left\{ (-\Delta\phi+V_\infty\phi,\phi)_{L^2}\ | \ \phi\in C^{\infty}_{0}(\R^{N}\setminus D(0,R)), \, \|\phi\|_{L^2}=1 \right\},
\end{align*}
where $D(0,R):=\{x\in\R^N \mid |x|\leq R \}$.
 Therefore the essential part of the spectrum of the operator $A$ satisfies $\inf\sigma_{\mathrm{ess}}(A) \ge \varrho( V_{\infty})-\lambda$. Hence the part of the spectrum $\sigma(A)$ contained in the interval $(-\infty, \varrho (V_{\infty})-\lambda)$ consists of isolated eigenvalues of finite multiplicity, that in turn satisfy the estimate 
\begin{align}\label{est-eigen}
|u(x)|\leq Ce^{-\delta |x|} \qquad \text{ for a.e. } \ x\in\R^{N},
\end{align}
where $\delta=\delta(u)>0$ and $C=C(u)>0$ are constants -- see \cite[Th. C.3.4]{Simon}. \\
\indent Since $A$ is self-adjoint, by the spectral theorem, there are closed mutually orthogonal subspaces $X_-$, $X_0$, $X_{+}$ of $X=L^2(\R^N)$ such that $X=X_- \oplus X_0 \oplus X_+$ and
\begin{equation*}
\sigma (A|X_-) = \sigma(A)\cap (-\infty,0), \  \ \ X_0 = \Ker \, A, \ \ \  \sigma (A|X_+) = \sigma(A)\cap (0,+\infty).
\end{equation*}
It is known that $\dim\, X_- <  \infty$ and $\dim X_0 <\infty$. We denote by $Q_{-}, P, Q_+$,  be the orthogonal projections onto $X_-$, $X_0$ and  $X_+$, respectively, and we write $Q:=Q_{+}+Q_{-}$. By the semigroup theory we know that 
\begin{equation*}
S(t)Q_{\pm} u = Q_{\pm} S(t) u,\quad u\in X, \ t\ge 0,
\end{equation*}
and there exist $K, \rho_+ >0$ such that
\begin{equation}\label{X-plus-semigroup-ineq}
\|S(t)u\|_{H^1} \leq K t^{-1/2} e^{-\rho_+ t}\|u\|_{L^2} \ \ \mbox{ for all } \ \ u\in X_+, \ t>0.
\end{equation}
Obviously, the restricted semigroup $\{ e^{-tA}|_{X_-}\}_{t\geq 0}$ can be extended to a $C_0$-group on $X_{-}$ and there exist $\rho_->0$ such that
\begin{equation}\label{X-minus-group-ineq}
\|S(t) u\|_{L^2} \leq e^{\rho_- t}\|u\|_{L^2}, \quad  u\in X_-, \ t\leq 0.
\end{equation}
The following theorem says that the spectral properties enable us to determine the Conley index of zero in the linear case.
\begin{theorem} \label{linear-conley-index}{\em (see \cite[Ch. I, Th. 11.1]{rybakowski})}
The restriction of the semigroup $\{S(t)_{|Y}\}_{t\ge 0}$ to $Y:=H^{1}(\R^{N})\cap (X_{+}\oplus X_{-})$ determines the semiflow on the space $Y$ equipped with the norm $\|\cdot\|_{H^{1}}$ {\em(}note that $Y=H^1(\R^N)$ if $\lambda\not\in\sigma(-\Delta+V)${\em)}. Furthermore the set $K_0=\{0\}$ is the maximal bounded invariant set of the semiflow, $(S(t)_{|Y}, K_0)\in {\mathcal I}(X)$ and $$
h(S(t)_{|Y}, K_0)=\Sigma^{\dim X_-}.
$$
\end{theorem}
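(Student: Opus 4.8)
The plan is to reduce the statement to a finite-dimensional model via the spectral decomposition $X = X_- \oplus X_0 \oplus X_+$ and then quote the classical Conley-index computation for a hyperbolic linear flow. First I would verify that $\{S(t)_{|Y}\}_{t\ge 0}$ is indeed a semiflow on $Y = H^1(\R^N)\cap (X_+\oplus X_-)$: since $S(t)$ commutes with the orthogonal projections $Q_\pm$, it leaves $X_+\oplus X_-$ invariant, and analyticity of the semigroup together with the smoothing estimate \eqref{X-plus-semigroup-ineq} shows $S(t)(X_+\oplus X_-)\subset H^1(\R^N)$ for $t>0$, with $t\mapsto S(t)u$ continuous into $H^1$; the group extension \eqref{X-minus-group-ineq} on the finite-dimensional $X_-$ causes no trouble. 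The parenthetical remark that $Y = H^1(\R^N)$ when $\lambda\notin\sigma(-\Delta+V)$ is immediate since then $X_0 = \Ker A = \{0\}$.

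Next I would identify the maximal bounded invariant set. On $Y$ decompose $u = u_- + u_+$ with $u_\pm = Q_\pm u$. For a full bounded solution, the $X_+$-component must satisfy $u_+(t) = S(t-s)u_+(s)$ for all $s\le t$; letting $s\to-\infty$ and using $\|S(t)u\|_{H^1}\le Kt^{-1/2}e^{-\rho_+ t}\|u\|_{L^2}$ forces $u_+\equiv 0$ — more precisely, boundedness in $H^1$ gives boundedness in $L^2$, and the exponential decay of $S(t)$ on $X_+$ together with the backward-time relation kills $u_+$. Symmetrically, on the finite-dimensional $X_-$ the restricted group expands backwards: $\|S(t)u\|_{L^2}\le e^{\rho_- t}\|u\|_{L^2}$ for $t\le 0$ means $\|S(-t)u\|_{L^2}\ge e^{-\rho_- t}\|u\|$ grows as $t\to+\infty$ unless $u_-(0)=0$. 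Hence $K_0 = \{0\}$. Admissibility of a closed ball as an isolating neighbourhood follows because $X_-$ is finite-dimensional (so bounded sets there are precompact) and on $X_+$ the semigroup is eventually compact-smoothing in the sense that gives precompactness of endpoints of trajectories staying in a bounded set; thus $(S(t)_{|Y},K_0)\in\mathcal I(X)$.

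Finally, for the index itself I would invoke the cited result \cite[Ch.~I, Th.~11.1]{rybakowski}: the flow on $Y$ is, up to the continuous splitting $Y = X_- \oplus (X_+\cap H^1)$, the product of a linear contraction on the infinite-dimensional positive part and a linear expansion on the finite-dimensional negative part of dimension $\dim X_-$. The positive part is attracted to $0$ and contributes the trivial index $\overline 0$'s companion $\Sigma^0$ (a point), while the $k$-dimensional linear flow $\dot v = -A_{|X_-}v$ with $\sigma(A_{|X_-})\subset(-\infty,0)$ has an isolating block given by a product of a disk in the expanding directions with a point, whose exit set is the boundary sphere, yielding index $\Sigma^{k}$ with $k=\dim X_-$. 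Using the product property (H3), $h(S(t)_{|Y},K_0) = \Sigma^{\dim X_-}\wedge \Sigma^0 = \Sigma^{\dim X_-}$. The main obstacle, and the only genuinely technical point, is establishing admissibility of the isolating neighbourhood on the infinite-dimensional factor $X_+\cap H^1$; this is exactly where the smoothing estimate \eqref{X-plus-semigroup-ineq} and the analyticity of $\{S(t)\}$ enter, and it is precisely the content of the cited theorem of Rybakowski, so in practice the proof amounts to checking that the hypotheses of that theorem are met.
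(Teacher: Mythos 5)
There is no internal proof to compare against here: the paper states this result with a citation to Rybakowski \cite[Ch. I, Th. 11.1]{rybakowski} and proves nothing itself, so your sketch is a reconstruction of the standard verification. Most of it is sound: the invariance of $X_+\oplus X_-$ under $S(t)$ and the smoothing into $H^1$, the remark that $Y=H^1(\R^N)$ when $\lambda\notin\sigma(-\Delta+V)$ because $X_0=\Ker A=\{0\}$, the identification $K_0=\{0\}$ (the decay \eqref{X-plus-semigroup-ineq} kills the $X_+$-component of any full bounded solution, and \eqref{X-minus-group-ineq} gives forward-in-time expansion on $X_-$ killing the other component --- modulo a small slip of direction: the growth $\|S(\tau)u\|_{L^2}\geq e^{\rho_-\tau}\|u\|_{L^2}$ is as $\tau=-t\to+\infty$), and the smash-product computation $\Sigma^{\dim X_-}\wedge\Sigma^{0}=\Sigma^{\dim X_-}$ via (H3).

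The one step whose stated justification would fail is admissibility. You attribute it to the semigroup on the infinite-dimensional factor being ``eventually compact-smoothing'', yielding precompactness of endpoints. On $\R^N$ this is simply not available: $-\Delta+V$ has no compact resolvent, $H^1(\R^N)$ does not embed compactly into $L^2(\R^N)$, and $\{S(t)\}$ is not a compact semigroup --- indeed the absence of exactly this kind of compactness is why the paper develops tail estimates for the nonlinear semiflows. Fortunately, in the linear hyperbolic case no compactness is needed: if $t_n\to\infty$ and $\{S(t)x_n \mid t\in[0,t_n]\}$ stays in a bounded set $N\subset Y$, then $\|S(t_n)Q_+x_n\|_{H^1}\leq K\,t_n^{-1/2}e^{-\rho_+t_n}\|Q_+x_n\|_{L^2}\to 0$ by \eqref{X-plus-semigroup-ineq}, so the stable components of the endpoints converge to $0$ in $H^1$, while $S(t_n)Q_-x_n$ is a bounded sequence in the finite-dimensional space $X_-$ and hence subconverges; therefore $S(t_n)x_n$ has a convergent subsequence and every closed bounded subset of $Y$ is admissible. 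It is this decay-plus-finite-dimensionality mechanism, not compact smoothing, that the hypotheses of the cited theorem encode; with that correction your outline indeed reduces the statement to checking the hypotheses of \cite[Ch. I, Th. 11.1]{rybakowski}, which is all the paper itself does.
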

Let us consider the semilinear differential equation 
\begin{equation}\label{eq-diff-g1}
\dot u(t) = - A u(t) + g(t),\quad t>0,
\end{equation}
where $g:[0,\infty)\to X$ is a continuous function and define the cut-off map $\phi_{n}(x) := \phi(|x|^{2}/n^{2})$ for $x\in \R^{n}$, where the profile $\phi:[0,\infty)\to [0,1]$ is a smooth function such that $\phi(x) = 0$ for $x\in[0,1]$ and $\phi(x) = 1$ for $|x|\ge 2$. 
The following version of the estimates for the solutions of \eqref{eq-diff-g1} can be obtained along the lines of \cite[Lem. 3.6]{Cw-Kr-2019} and \cite[Lem. 5.3]{Cw-Luk-2021}.
\begin{proposition}\label{classic_tail_estimates}
There exist a sequence $(\beta_{n})$ with $\beta_{n}\to 0^+$ as $n\to\infty$ and $n_0\geq 1$, such that if $u\in C((t_0,t_{1}),H^2(\R^{N}))\cap C^1((t_0,t_{1}),L^2(\R^{N}))$ is a classical solution of the equation \eqref{eq-diff-g1}, then
\begin{align}\label{eq-p-1bm}
\frac{1}{2}\frac{d}{dt}\int_{\R^{N}}\phi_{n}|u(t)|^{2}\,dx \leq -\int_{\R^{N}}(V_{\infty}(x)-\lambda)\phi_{n}|u(t)|^{2}\,dx  + \gamma_{n}\|u(t)\|_{H^{1}}^{2} + \langle \phi_{n}u(t),g(t)\rangle_{L^{2}}
\end{align}
for $t\in(t_{0},t_{1})$ and $n\ge n_0$. The above sequence $(\beta_n)$ and integer $n_0\ge 1$ depend only on the potential $V$. 
\end{proposition}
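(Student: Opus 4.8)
The plan is to differentiate the function $t\mapsto\int_{\R^{N}}\phi_{n}|u(t)|^{2}\,dx$ directly, substitute the equation \eqref{eq-diff-g1}, integrate by parts in the Laplacian term, and then estimate the two error contributions produced by the cut-off $\phi_{n}$. Since $u(t)\in H^{2}(\R^{N})$ on $(t_{0},t_{1})$, $t\mapsto u(t)$ is $C^{1}$ into $L^{2}(\R^{N})$ and $\phi_{n}\in L^{\infty}(\R^{N})$, the scalar map $t\mapsto\int_{\R^{N}}\phi_{n}|u(t)|^{2}\,dx$ is differentiable with derivative $2\langle\phi_{n}u(t),\dot u(t)\rangle_{L^{2}}$. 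Inserting $\dot u=-Au+g=\Delta u-Vu+\lambda u+g$ and integrating by parts (legitimate because $\phi_{n}u(t)\in H^{1}(\R^{N})$ and $u(t)\in H^{2}(\R^{N})$) leads to
\begin{align*}
\frac12\,\frac{d}{dt}\int_{\R^{N}}\phi_{n}|u|^{2}\,dx
&= -\int_{\R^{N}}\phi_{n}|\nabla u|^{2}\,dx-\int_{\R^{N}}u\,\nabla\phi_{n}\cdot\nabla u\,dx\\
&\quad-\int_{\R^{N}}\phi_{n}V|u|^{2}\,dx+\lambda\int_{\R^{N}}\phi_{n}|u|^{2}\,dx+\langle\phi_{n}u,g\rangle_{L^{2}}.
\end{align*}

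The first term on the right is nonpositive, so it may simply be discarded for an upper bound. For the second term I would exploit that $\nabla\phi_{n}(x)=\phi'(|x|^{2}/n^{2})\,2x/n^{2}$ is supported in $\{\,n\le|x|\le\sqrt2\,n\,\}$, whence $\|\nabla\phi_{n}\|_{L^{\infty}}\le c_{0}/n$ with $c_{0}:=2\sqrt2\,\|\phi'\|_{L^{\infty}}$; Cauchy--Schwarz and Young's inequality then give $\big|\int_{\R^{N}}u\,\nabla\phi_{n}\cdot\nabla u\,dx\big|\le(c_{0}/2n)\|u\|_{H^{1}}^{2}$. It then only remains to handle the potential term: splitting $V=V_{\infty}+V_{0}$, the part $-\int_{\R^{N}}\phi_{n}V_{\infty}|u|^{2}\,dx$ together with $\lambda\int_{\R^{N}}\phi_{n}|u|^{2}\,dx$ equals $-\int_{\R^{N}}(V_{\infty}-\lambda)\phi_{n}|u|^{2}\,dx$, which is precisely the leading term on the right-hand side of \eqref{eq-p-1bm}, so I only need to absorb $-\int_{\R^{N}}\phi_{n}V_{0}|u|^{2}\,dx\le\int_{\{|x|\ge n\}}|V_{0}|\,|u|^{2}\,dx$ into the $\|u\|_{H^{1}}^{2}$-error.

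Here I would apply H\"older's inequality with exponents $p$ and $p'=p/(p-1)$,
\[
\int_{\{|x|\ge n\}}|V_{0}|\,|u|^{2}\,dx\le\big\|\,V_{0}\mathbf 1_{\{|x|\ge n\}}\big\|_{L^{p}}\,\|u\|_{L^{2p'}}^{2},
\]
and observe that the exponent restriction in $(KR)$ is exactly what places $2p'$ in the range of the Sobolev embedding of $H^{1}(\R^{N})$: one checks that $2p'<2N/(N-2)$ for $N\ge4$ (using $p>N/2$), $2p'\le6$ for $N=3$, $2p'<\infty$ for $N=2$, and $H^{1}(\R)\hookrightarrow L^{2p'}(\R)$ for $N=1$, so in every dimension $\|u\|_{L^{2p'}}\le C_{S}\|u\|_{H^{1}}$ with $C_{S}=C_{S}(N,p)$ independent of $u$ and of $n$. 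Consequently $-\int_{\R^{N}}\phi_{n}V_{0}|u|^{2}\,dx\le C_{S}^{2}\big\|V_{0}\mathbf 1_{\{|x|\ge n\}}\big\|_{L^{p}}\|u\|_{H^{1}}^{2}$, and since $V_{0}\in L^{p}(\R^{N})$ dominated convergence gives $\big\|V_{0}\mathbf 1_{\{|x|\ge n\}}\big\|_{L^{p}}\to0$ as $n\to\infty$.

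Putting the three estimates together, \eqref{eq-p-1bm} will hold for every $n\ge1$ (so one may take $n_{0}=1$) with $\gamma_{n}=\beta_{n}$, where
\[
\beta_{n}:=\frac{c_{0}}{2n}+C_{S}^{2}\big\|V_{0}\mathbf 1_{\{|x|\ge n\}}\big\|_{L^{p}}\longrightarrow 0^{+}\quad(n\to\infty);
\]
as $c_{0}$, $C_{S}$ and $V_{0}$ are fixed once $V$ (and the profile $\phi$) is fixed, both $(\beta_{n})$ and $n_{0}$ depend only on $V$, as claimed. I expect the only genuinely delicate point to be the treatment of the $V_{0}$-term, namely verifying dimension by dimension that the Kato--Rellich exponent in $(KR)$ is exactly matched to the embedding $H^{1}(\R^{N})\hookrightarrow L^{2p'}(\R^{N})$ and that the embedding constant is unaffected by truncating $V_{0}$ (which is immediate). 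The differentiation under the integral sign, the integration by parts valid for $u(t)\in H^{2}(\R^{N})$, and the gradient cross-term estimate are all routine.
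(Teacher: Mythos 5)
Your argument is correct and follows essentially the same route as the paper's proof: differentiate $\int\phi_{n}|u(t)|^{2}\,dx$, insert the equation, integrate by parts and discard the nonpositive $-\int\phi_{n}|\nabla u|^{2}$ term, bound the cut-off cross term by $O(1/n)\|u\|_{H^{1}}^{2}$, and treat the $V_{0}$-part via H\"older plus the Sobolev embedding $H^{1}(\R^{N})\hookrightarrow L^{2p/(p-1)}(\R^{N})$, yielding $\beta_{n}\to 0^{+}$ since $V_{0}\in L^{p}(\R^{N})$. The only cosmetic difference is that you bound $\phi_{n}|V_{0}|$ by $|V_{0}|\mathbf{1}_{\{|x|\ge n\}}$ rather than keeping $\|\phi_{n}V_{0}\|_{L^{p}}$, which is equivalent.
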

\begin{proof}
We begin with the observation that
\begin{equation}
\begin{aligned}\label{ineq-k1}
& \frac{1}{2}\frac{d}{dt}\langle\phi_{n}u(t), u(t)\rangle_{L^{2}} = \langle\phi_{n}u(t),\dot u(t)\rangle_{L^{2}} 
= \langle\phi_{n}u(t),- A u(t) + g(t)\rangle_{L^{2}} \\
&\quad = \langle\phi_{n}u(t),\Delta u(t) \rangle_{L^{2}} \!+\! \langle\phi_{n}u(t), -V u(t) \!+\! \lambda u(t)\rangle_{L^{2}} \!+\! \langle\phi_{n}u(t),g(t)\rangle_{L^{2}} \\
&\quad =: I_{1}(t) + I_{2}(t) + I_{3}(t)
\end{aligned}
\end{equation}
for $t\in[t_{0},t_{1}]$. Then we have the following estimates
\begin{equation}
\begin{aligned}\label{ineq-k2}
I_{1}(t) & = \langle\phi_{n}u(t),\Delta u(t) \rangle_{L^{2}} = -\langle\nabla(\phi_{n}u(t)),\nabla u(t) \rangle_{L^{2}} \\
&= -\!\int_{\R^{N}} \phi_{n}(x)|\nabla u(t)|^{2} \,dx -\frac{2}{n^{2}}\int_{\{n\leq|x|\leq n\sqrt{2}\}} \!u(t)\phi'\left(\tfrac{|x|^{2}}{n^{2}}\right)\, x \cdot \nabla u(t)\,dx \\
&\leq \frac{2\sqrt{2}}{n}L_{\phi}\|\nabla u(t)\|_{L^{2}}\|u(t)\|_{L^{2}} \leq \frac{2\sqrt{2}}{n}L_{\phi}\|u(t)\|_{H^{1}}^{2},
\end{aligned}
\end{equation}
where $L_{\phi}:=\sup_{t\ge 0} |\phi'(t)|$. On the other hand we have 
\begin{equation}\label{ineq-k3}
\begin{aligned}
I_{2}(t)& =\langle\phi_{n}u(t), -Vu(t) +\lambda u(t)\rangle_{L^{2}}  = -\int_{\R^{N}}(V_{\infty}(x)-\lambda)\phi_{n}|u(t)|^{2}\,dx - \int_{\R^{N}}V_{0}(x)\phi_{n}|u(t)|^{2}\,dx \\
& \leq -\int_{\R^{N}}(V_{\infty}(x)-\lambda)\phi_{n}|u(t)|^{2}\,dx + \|\phi_{n}V_{0}\|_{L^{p}}\|u(t)\|_{L^{2p/(p-1)}}^{2} \\
& \leq -\int_{\R^{N}}(V_{\infty}(x)-\lambda)\phi_{n}|u(t)|^{2}\,dx + \|\phi_{n}V_{0}\|_{L^{p}}\|u(t)\|_{H^{1}}^{2},
\end{aligned}
\end{equation}
where, in the last estimate, we used the Sobolev embedding $H^{1}(\R^{N})\subset L^{2p/(p-1)}(\R^{N})$, which is valid for $p>2$ satisfying the condition $(KR)$. 
Therefore, combining \eqref{ineq-k1}, \eqref{ineq-k2} and \eqref{ineq-k3} gives \eqref{eq-p-1bm} with 
\begin{align*}
\beta_{n}:= \frac{4\sqrt{2}}{n}L_{\phi} + 2\|\phi_{n}V_{0}\|_{L^{p}},\quad n\ge 1.
\end{align*}
Since $V_{0}\in L^{p}(\R^{N})$, we have $\beta_{n}\to 0^+$ as $n\to\infty$ and the proof of the proposition is completed. 
\end{proof}

\section{Semiflow and admissible homotopy}
Consider the differential equation
\begin{equation}\label{abs-eq}
\dot u (t)= - Au(t)+F(u(t)), \quad t>0,
\end{equation}
where $A$ is defined by \eqref{def-a} and the nonlinear map $F:H^1(\R^N)\to L^2(\R^N)$ is the Nemytskii operator determined by $f$, i.e. it is given by \begin{equation}\label{Nemytzki_f}
[F(u)](x):= f(x,u(x)) \quad \mbox{ for a.e. } \ x\in\R^N \text{ and all }  u\in H^1(\R^N).
\end{equation}
\begin{remark}
{\em From the assumptions $(f1)$ and $(f2)$ it follows that $f$ is a Careth\'{e}odory function. Therefore, for any $u\in H^{1}(\R^{N})$, the composition in \eqref{Nemytzki_f} is a measurable function. On the other hand, by $(f2)$ and the H\"older inequality, for any $u,v\in H^{1}(\R^{N})$, we have
\begin{equation}
\begin{aligned}\label{eq-lip}
\|F(u) - F(v)\|_{L^{2}} & \leq \|(l_{\infty}+l_{0})(u-v)\|_{L^{2}} \\
& \leq \|l_{\infty}\|_{L^{\infty}}\|u-v\|_{L^{2}}+ \|l_{0}\|_{L^{r}}\|u-v\|_{L^{2r/(r-2)}} \\
& \leq \left(\|l_{\infty}\|_{L^{\infty}}+ \|l_{0}\|_{L^{r}}\right)\|u-v\|_{H^{1}},
\end{aligned}
\end{equation}
where we used the Sobolev embedding $H^{1}(\R^{N})\subset L^{2r/(r-2)}(\R^{N})$, which holds true as $r\in(2,\infty)$ is restricted in $(PT)$. This implies that $F$ is a Lipschitz map. Furthermore, by $(f1)$ and \eqref{eq-lip}, we have
\begin{align}\label{eq-s-lin}
\|F(u)\|_{L^{2}} \leq \left(\|l_{\infty}\|_{L^{\infty}}+ \|l_{0}\|_{L^{r}}\right)\|u\|_{H^{1}} + \|c\|_{L^{2}}
\end{align}
for $u\in H^{1}(\R^{N})$, which gives the sublinear growth of $F$. \hfill $\square$}
\end{remark}
Since the operator $A$ is sectorial and $F$ satisfies \eqref{eq-lip} and \eqref{eq-s-lin}, it follows (see e.g. \cite[Theorem 3.3.3]{Henry}) that, for any $\bar u\in H^1(\R^N)$ and $t_0\in\R$, there exists a unique continuous function $u=u(\,\cdot\,; \bar u,t_0):[t_0,\infty) \to H^1(\R^N)$ such that $u(t_0)=\bar u$,
$u\in C((t_0,+\infty),H^2(\R^{N}))\cap C^1((t_0,+\infty),L^2(\R^{N}))$ and \eqref{abs-eq} holds for all $t>t_0$. Consequently the equation \eqref{abs-eq} determines the semiflow $\{\Phi_t\}_{t\geq 0}$ on the space $H^1(\R^N)$, given by
\begin{equation*}
\Phi_t (\bar u):= u (t; \bar u,0), \quad \bar u\in H^1(\R^N), \ t\ge 0.
\end{equation*}
It appears that the semiflow $\Phi$ is gradient-like.
\begin{proposition}\label{prop-fradc-d-c}
Let ${\mathcal E}:H^1(\R^N)\to\R$ be given by
\begin{equation}\label{eq-lyap}
{\mathcal E}(v):= \frac{1}{2} \int_{\R^N} |\nabla v(x)|^2 + (V(x)-\lambda)|v(x)|^2 \, d x + \int_{\R^N} {\mathcal F}(x,v(x))\, d x, \quad v\in H^1(\R^N),
\end{equation}
where ${\mathcal F}:\R^N\times \R\to\R$ is given by ${\mathcal F} (x,v):= \int_{0}^{v} f(x,w)\, dw$ for $x\in\R^N$ and $v\in\R$.
The function  ${\mathcal E}$ is a well-defined continuous Lyaponov functional for the semiflow $\Phi$, i.e. if $u:[t_0,t_{1}) \to H^1(\R^N)$ is a solution of the semiflow $\Phi$, then  
\begin{align*}
\frac{\d}{\d t} \mathcal{E} (u(t))  =-\int_{\R^N} |\dot u(t)|^{2} \,d x \quad  \text{for} \ \ t\in (t_0, t_1). 
\end{align*}
\end{proposition}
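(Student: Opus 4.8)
The plan is to verify three things in turn: that $\mathcal{E}$ is well-defined and finite on $H^1(\R^N)$, that it is continuous, and that it decreases along solutions at the claimed rate. For well-definedness, I would split the quadratic part as $\frac12\|\nabla v\|_{L^2}^2 + \frac12\int (V-\lambda)|v|^2$; the gradient term is obviously finite, and for the potential term I would write $V = V_\infty + V_0$ and estimate $\int |V_\infty||v|^2 \le \|V_\infty\|_{L^\infty}\|v\|_{L^2}^2$ and $\int |V_0||v|^2 \le \|V_0\|_{L^p}\|v\|_{L^{2p/(p-1)}}^2 \le \|V_0\|_{L^p}\|v\|_{H^1}^2$, using exactly the Sobolev embedding $H^1(\R^N)\subset L^{2p/(p-1)}(\R^N)$ already invoked in the proof of Proposition \ref{classic_tail_estimates}. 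For the nonlinear part, from $(f1)$ and $(f2)$ one has the pointwise bound $|\mathcal{F}(x,v)| \le |c(x)||v| + \frac12 l(x)|v|^2$ obtained by integrating $|f(x,w)| \le |c(x)| + l(x)|w|$ in $w$; integrating this over $\R^N$ and splitting $l = l_\infty + l_0$ exactly as in \eqref{eq-lip} shows $\int_{\R^N}|\mathcal{F}(x,v(x))|\,dx < \infty$ for $v\in H^1(\R^N)$.

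For continuity, I would show each of the three terms is continuous in $v\in H^1(\R^N)$. The gradient and $V_\infty$ terms are continuous since $v\mapsto \nabla v$ and $v\mapsto v$ are continuous into $L^2$ and the square of the $L^2$-norm (or the bilinear form with the bounded weight $V_\infty$) is continuous; the $V_0$-term is continuous because $v\mapsto v$ is continuous $H^1\to L^{2p/(p-1)}$ and $w\mapsto \int V_0 |w|^2$ is continuous on $L^{2p/(p-1)}$ by Hölder. For the nonlinear term, continuity of $v\mapsto \int \mathcal{F}(x,v)\,dx$ follows from the mean-value-type bound $|\mathcal{F}(x,v) - \mathcal{F}(x,\tilde v)| \le (|c(x)| + l(x)(|v|+|\tilde v|))|v-\tilde v|$ (integrate $|f(x,w)|\le|c(x)|+l(x)|w|$ between $v$ and $\tilde v$), followed by Hölder and the same embeddings, which gives a local Lipschitz estimate in the $H^1$-norm.

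The main step — and the one I expect to be the real work — is the dissipation identity. For a solution $u:[t_0,t_1)\to H^1(\R^N)$ of $\Phi$ we have the extra regularity $u\in C((t_0,t_1),H^2)\cap C^1((t_0,t_1),L^2)$, so on $(t_0,t_1)$ one may differentiate. The expected computation is
\begin{align*}
\frac{d}{dt}\mathcal{E}(u(t)) &= \langle \nabla u(t), \nabla \dot u(t)\rangle_{L^2} + \langle (V-\lambda)u(t), \dot u(t)\rangle_{L^2} + \langle F(u(t)), \dot u(t)\rangle_{L^2} \\
&= \langle -\Delta u(t) + (V-\lambda)u(t) + F(u(t)), \dot u(t)\rangle_{L^2} = \langle A u(t) + F(u(t)), \dot u(t)\rangle_{L^2} = -\|\dot u(t)\|_{L^2}^2,
\end{align*}
where the last equality uses the equation $\dot u(t) = -Au(t) + F(u(t))$. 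The delicate points are: (i) justifying that $t\mapsto \int\mathcal{F}(x,u(t,x))\,dx$ is differentiable with derivative $\langle F(u(t)),\dot u(t)\rangle_{L^2}$ — here I would use the chain rule together with the fact that $\partial_v \mathcal{F}(x,v) = f(x,v)$, the $L^2$-Lipschitz bound \eqref{eq-lip} on $F$, and dominated convergence justified by the local bound on $\mathcal{F}$ above; (ii) justifying the integration by parts $\langle\nabla u(t),\nabla\dot u(t)\rangle_{L^2} = -\langle\Delta u(t),\dot u(t)\rangle_{L^2}$, which is legitimate because $u(t)\in H^2(\R^N)$ and $\dot u(t)\in L^2(\R^N)$ (in fact $\dot u(t)\in H^1$ by standard parabolic smoothing for analytic semigroups); and (iii) checking that the $H^1$-valued map $t\mapsto u(t)$ is differentiable in a way compatible with differentiating the quadratic form — this again follows from $u\in C^1((t_0,t_1),L^2)\cap C((t_0,t_1),H^2)$ by interpolation, so $u\in C^1((t_0,t_1),H^1)$. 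Once these are in place the identity is immediate, and nonincreasingness of $t\mapsto\mathcal{E}(u(t))$ follows. I would remark that differentiability at the endpoint $t_0$ is not claimed, consistent with the statement.
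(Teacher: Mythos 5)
Your overall plan (finiteness, continuity, then term-by-term differentiation using the $H^2$/$C^1$-in-$L^2$ regularity of solutions) is the same as the paper's, and your treatment of the nonlinear term — integrating $(f2)$ between $u(t)$ and $u(t+h)$ and controlling the error through the estimate \eqref{eq-lip} — is essentially the paper's mean-value argument. The genuine weak point is your step (iii): from $u\in C^1((t_0,t_1),L^2(\R^N))\cap C((t_0,t_1),H^2(\R^N))$ you cannot conclude $u\in C^1((t_0,t_1),H^1(\R^N))$ ``by interpolation''. The inequality $\|v\|_{H^1}^2\leq C\|v\|_{L^2}\|v\|_{H^2}$ applied to $v=u(t+h)-u(t)-h\dot u(t)$ needs a bound on the $H^2$-norm of the difference quotient, and continuity in $H^2$ only gives $\|u(t+h)-u(t)\|_{H^2}=o(1)$, not $O(h)$; interpolation yields local H\"older-$\tfrac12$ continuity into $H^1$, nothing more. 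If you want differentiability into $H^1$ you must actually invoke parabolic smoothing (your parenthetical remark about $\dot u(t)\in H^1$, made precise via \cite{Henry}), and that is then the real content of the step. The paper avoids this entirely: it writes the difference quotient of $\|\nabla u(t)\|_{L^2}^2$, integrates by parts so that the Laplacian falls on $u(t+h)$ and $u(t)$ (both in $H^2$), and passes to the limit using only $H^2$-continuity of $u$ and $L^2$-convergence of $h^{-1}(u(t+h)-u(t))$ — a cheaper route you may prefer to adopt.

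Separately, your final chain of equalities does not close as written: with $\mathcal{E}$ as in \eqref{eq-lyap} your computation gives $\frac{d}{dt}\mathcal{E}(u(t))=\langle Au(t)+F(u(t)),\dot u(t)\rangle_{L^2}$, and since the equation gives $-\dot u(t)=Au(t)-F(u(t))$, this equals $-\|\dot u(t)\|_{L^2}^2+2\langle F(u(t)),\dot u(t)\rangle_{L^2}$, not $-\|\dot u(t)\|_{L^2}^2$. The dissipation identity holds for the functional in which the nonlinear term enters with a minus sign, $-\int_{\R^N}\mathcal{F}(x,v(x))\,dx$; this sign slip is in fact already present in the statement and in the last line of the paper's own proof (whose intermediate computation produces $+\langle F(u(t)),\dot u(t)\rangle_{L^2}$ but whose final bracket carries $-F(u(t))$), so it is inherited rather than original to you — but a complete proof should flag the typo and prove the identity for the correctly signed functional rather than assert an equality that is false for the functional as printed. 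The well-definedness and continuity parts of your argument are fine (the paper does not even spell them out) once the exponents are adjusted as you indicate.
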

\begin{proof} If $u:[t_0,t_1)\to H^1(\R^N)$ is a solution of \eqref{abs-eq}, then both $\dot u$ and $F\circ u$ are continuous functions on $(t_0,+\infty)$ with values in $L^2(\R^N)$, and consequently $A\circ u = -\dot u + F \circ u$ is continuous, which implies that $u\in C((t_0,t_1), H^2(\R^N))$. Fix $t\in (t_0,t_1)$ and take any small $h\neq 0$, then, using the fact that $u\in C((t_0,t_1), H^2(\R^N))$, we get
\begin{align*}
& \frac{1}{h} \big( \|\nabla u(t+h)\|_{L^2}^{2}- \|\nabla u(t) \|_{L^2}^{2} \big) \\ 
& \quad = \langle \nabla u(t+h), \nabla\big(h^{-1} (u(t+h)-u(t))\big)\rangle_{L^2} + \langle  \nabla\big(h^{-1} (u(t+h)-u(t))\big), \nabla u(t)\rangle_{L^2} \\
& \quad = - \langle \Delta u(t+h), h^{-1}(u(t+h)-u(t)) \rangle_{L^2}- \langle h^{-1}(u(t+h)-u(t)), \Delta u(t)\rangle_{L^2}\\
& \quad \to 2 \langle -\Delta u(t), \dot u(t)\rangle_{L^2}   \ \text{ as } h\to 0.
\end{align*}
In a similar manner
$$
\frac{d}{d t} \int_{\R^N} (V(x)-\lambda)|u(t)|^2 = 2\int_{\R^N} (V(x)-\lambda) u(t) \dot u(t)\, dx.
$$
By standard arguments
\begin{align*}
& \frac{1}{h}\int_{\R^N} (\mathcal{F}(x,u(t+h)(x))-
\mathcal{F}(x, u(t)(x))\, dx \\
 & \quad = \frac{1}{h} \int_{\R^N} g_n(x) \big(u(t+h)(x)-u(t)(x)\big)\, dx
  \to \langle F(u(t)), \dot u(t)\rangle_{L^2} \ \text { as } h\to 0^+,
\end{align*}
where  $g_h (x):=f \big(x, u(t)(x)+\theta_{x} (u(t+h)(x)-u(t)(x)) 
\big)$, with $\theta_x\in (0,1)$ from the Lagrange mean value theorem applied for fixed $x\in \R^N$, and we use the fact that $g_h \to F(u(t))$  in $L^2(\R^N)$  as $h\to 0$. The latter follows from the following inequality implied by $(f2)$
$$
|g_h(x)-f(t,u(t)(x))| \leq l(x) |u(t+h)(x)-u(t)(x)| \ \text{ for a.e. } x\in\R^N,
$$
which implies $\|g_h-F(u(t))\|_{L^2}\leq C (\|l_\infty\|_{L^\infty}+ \|l_0\|_{L^{r}})\|u(t+h)-u(t)\|_{H^1}$ as in \eqref{eq-lip}.\\
\indent Combining the above three equalities together we obtain
$$
(\mathcal{E} \circ u)'(t) = \langle -\Delta u(t) + (V-\lambda)u(t) - F(u(t)), \dot u(t)\rangle_{L^2}  = - \| \dot u(t)\|_{L^2(\R^N)}^{2},
$$
which proves the assertion.
\end{proof}

\indent Let us also consider a parameterized by $s\in[0,1]$ family of differential equations
\begin{equation}\label{eq-diff-g}
\dot u(t) = - A u(t) + G(u(t),s),\quad t>0,
\end{equation}
where $G: H^1(\R^N) \times [0,1] \to L^2(\R^N)$ is a continuous map satisfying the following conditions\\[5pt]
\noindent\makebox[9mm][l]{$(G1)$}\parbox[t][][t]{166mm}{the map $G$ is locally Lipschitz, that is, for any $R>0$, there is $L_{R}>0$ such that
\begin{equation*}
\|G(u,s)-G(v,s)\|_{L^{2}}\leq L_{R}\|u-v\|_{H^{1}}
\end{equation*}
for any $s\in[0,1]$ and $u,v \in H^1(\R^N)$ with $\|u\|_{H^{1}}$, $\|v\|_{H^{1}} \leq R$;}\\[5pt] 
\noindent\makebox[9mm][l]{$(G2)$}\parbox[t][][t]{166mm}{the map $G$ has sublinear growth, that is, there is $C>0$ such that
\begin{equation*}
\|G(u,s)\|_{L^{2}}\leq C(\|u\|_{H^{1}} + 1)
\end{equation*}
for any $s\in[0,1]$ and $u\in H^1(\R^N)$.}\\[5pt]
Given $s\in [0,1]$, we denote by $\Psi^{(s)} = \big\{\Psi_{t}^{(s)} \big\}_{t\geq 0}$ the semiflow on $H^1(\R^N)$ determined by the differential equation \eqref{eq-diff-g}. We shall use the following continuation property -- see e.g. \cite[Th. 3.2]{Cw-Luk-2021}.
\begin{proposition}\label{semiflow-continuity-prop}
The following assertions hold. \\[3pt]
\noindent\makebox[5mm][r]{$(i)$} \parbox[t][][t]{166mm}{If $\bar u_n \to \bar u_0$ in $H^1(\R^N)$ and $s_n\to s_0$, then $\Psi_{t}^{(s_n)}(\bar u_n) \to \Psi_{t}^{(s_0)}(\bar u_0)$ in $H^1(\R^N)$ for all $t\ge 0$.}\\[3pt]
\noindent\makebox[5mm][r]{$(ii)$} \parbox[t][][t]{166mm}{If $(\bar u_n)$ is a bounded sequence in $H^1(\R^N)$ such that $\bar u_n\to \bar u_0$ in $L^2(\R^N)$ for some $\bar u_0\in H^1(\R^N)$, then $\Psi_{t}^{(s_n)} (\bar u_n) \to \Psi_{t}^{(s_0)}(\bar u_0)$ in $H^1(\R^N)$ uniformly with respect to $t$ in compact subsets of $(0,\infty)$.}
\end{proposition}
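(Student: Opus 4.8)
The plan is to argue from the mild (variation-of-constants) form of \eqref{eq-diff-g},
$$
\Psi^{(s)}_t(\bar u) = S(t)\bar u + \int_0^t S(t-\tau)\,G\big(\Psi^{(s)}_\tau(\bar u),s\big)\,d\tau,
$$
where $\{S(t)\}_{t\geq 0}$ is the analytic semigroup generated by $-A$, using the smoothing estimates for $S$. Since $V$ is of Kato--Rellich type, $A$ shifted to a strictly positive self-adjoint operator has $H^1(\R^N)=D(A^{1/2})$ with equivalent norms; hence --- equivalently, via \eqref{X-plus-semigroup-ineq}, \eqref{X-minus-group-ineq} and $\dim(X_-\oplus X_0)<\infty$ --- there are $M\geq 1$, $\omega\in\R$ such that, for $t>0$,
$$
\|S(t)u\|_{L^2}\leq Me^{\omega t}\|u\|_{L^2},\qquad \|S(t)u\|_{H^1}\leq Me^{\omega t}\|u\|_{H^1},\qquad \|S(t)u\|_{H^1}\leq Mt^{-1/2}e^{\omega t}\|u\|_{L^2}.
$$
I shall prove $(ii)$ and then deduce $(i)$. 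A preliminary step is the \emph{a priori bound}: for all $R,T>0$ there is $\rho=\rho(R,T)>0$ with $\|\bar u\|_{H^1}\leq R\Rightarrow\|\Psi^{(s)}_t(\bar u)\|_{H^1}\leq\rho$ for $s\in[0,1]$, $t\in[0,T]$. Feeding $(G2)$ and the above estimates into the mild formula gives, for $\chi(t):=\|\Psi^{(s)}_t(\bar u)\|_{H^1}$, an inequality $\chi(t)\leq a+b\int_0^t(t-\tau)^{-1/2}(\chi(\tau)+1)\,d\tau$ with $a,b$ depending only on $R,T$; since $\chi$ is continuous on $[0,T]$, the singular Gronwall lemma \cite[Lem. 7.1.1]{Henry} closes the bound.

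For $(ii)$, fix $T>0$, let $R:=\sup_n\|\bar u_n\|_{H^1}<\infty$, $\rho:=\rho(R,T)$, and set $u_n(t):=\Psi^{(s_n)}_t(\bar u_n)$, $u_0(t):=\Psi^{(s_0)}_t(\bar u_0)$, $\psi_n(t):=\|u_n(t)-u_0(t)\|_{H^1}$ (so $\psi_n\leq 2\rho$ on $[0,T]$). Subtracting the mild formulas and writing
$$
G(u_n(\tau),s_n)-G(u_0(\tau),s_0)=\big[G(u_n(\tau),s_n)-G(u_0(\tau),s_n)\big]+\big[G(u_0(\tau),s_n)-G(u_0(\tau),s_0)\big],
$$
estimating the first bracket by $(G1)$ (constant $L_\rho$) and the second after applying $S(t-\tau)$, one obtains for $t\in[0,T]$
$$
\psi_n(t)\leq g_n(t)+C_1\int_0^t(t-\tau)^{-1/2}\psi_n(\tau)\,d\tau,\qquad g_n(t):=C_2\|\bar u_n-\bar u_0\|_{L^2}\,t^{-1/2}+b_n(t),
$$
with $C_1,C_2>0$ depending only on $\rho,T$ and $b_n(t):=\int_0^t\|S(t-\tau)[G(u_0(\tau),s_n)-G(u_0(\tau),s_0)]\|_{H^1}\,d\tau$. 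Here the trajectory $u_0([0,T])$ is compact in $H^1$, so $G$ is uniformly continuous on $u_0([0,T])\times[0,1]$; hence $\varepsilon_n:=\sup_{\tau\in[0,T]}\|G(u_0(\tau),s_n)-G(u_0(\tau),s_0)\|_{L^2}\to 0$, and therefore $b_n(t)\leq C_3\varepsilon_n\to 0$ uniformly on $[0,T]$. In particular $g_n\in L^1(0,T)$ with $\int_0^T g_n\to 0$, and $g_n\to 0$ uniformly on $[\delta,T]$ for every $\delta>0$.

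The main obstacle is converting this \emph{singular} Volterra inequality into uniform convergence on compact subsets of $(0,\infty)$: the factor $t^{-1/2}$ is only borderline integrable and, in $(ii)$, $g_n$ itself is unbounded at $t=0$, so a naive Gronwall in the sup-norm fails near the origin. The remedy is to substitute the inequality into itself once; by Fubini and the Beta identity $\int_\sigma^t(t-\tau)^{-1/2}(\tau-\sigma)^{-1/2}\,d\tau=\pi$ the iterated kernel becomes bounded, so that
$$
\psi_n(t)\leq\widetilde g_n(t)+C_1^2\pi\int_0^t\psi_n(\sigma)\,d\sigma,\qquad \widetilde g_n(t):=g_n(t)+C_1\int_0^t(t-\tau)^{-1/2}g_n(\tau)\,d\tau,
$$
and the classical Gronwall lemma gives $\psi_n(t)\leq\widetilde g_n(t)+C_4\int_0^T\widetilde g_n$ on $[0,T]$. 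Since $\widetilde g_n$ inherits from $g_n$ both $\int_0^T\widetilde g_n\to 0$ and uniform convergence to $0$ on each $[\delta,T]$, we conclude $\sup_{t\in[\delta,T]}\psi_n(t)\to 0$ for all $0<\delta<T$, which is exactly $(ii)$. Finally $(i)$ follows at once: if $\bar u_n\to\bar u_0$ in $H^1$, then $\bar u_n\to\bar u_0$ in $L^2$ with $(\bar u_n)$ bounded in $H^1$, so $(ii)$ yields convergence for each $t>0$, while the case $t=0$ is trivial.
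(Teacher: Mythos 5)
Your argument is correct. Note that the paper itself does not prove this proposition: it is quoted with a reference to \cite[Th. 3.2]{Cw-Luk-2021}, so what you have done is supply a self-contained proof, and it is the natural one: Duhamel's formula for \eqref{eq-diff-g}, the identification of $H^1(\R^N)$ with the domain of the square root of a positive shift of $A$ (valid here because a Kato--Rellich potential is infinitesimally form-bounded with respect to $-\Delta$, or, as you observe, obtainable from \eqref{X-plus-semigroup-ineq} together with the finite dimensionality of $X_-\oplus X_0$), the locally uniform Lipschitz and sublinear growth conditions $(G1)$--$(G2)$, and a Gronwall argument with the singular kernel $(t-\tau)^{-1/2}$. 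The two genuinely delicate points are handled properly: the term $S(t)(\bar u_n-\bar u_0)$ is estimated by $Ct^{-1/2}\|\bar u_n-\bar u_0\|_{L^2}$, which is exactly why assertion $(ii)$ can only give uniformity on compact subsets of $(0,\infty)$; and the borderline singular Volterra inequality is closed by one self-substitution, Tonelli, and the Beta identity $\int_\sigma^t(t-\tau)^{-1/2}(\tau-\sigma)^{-1/2}\,d\tau=\pi$, after which the classical Gronwall lemma applies (alternatively you could have invoked the singular Gronwall lemma of \cite[Lem.~7.1.1]{Henry} directly, which is what makes such continuation statements routine in Henry's framework). Two cosmetic remarks: take $R:=\max\{\sup_n\|\bar u_n\|_{H^1},\|\bar u_0\|_{H^1}\}$ so that the a priori radius $\rho$ and the Lipschitz constant $L_\rho$ cover the limit trajectory as well (in fact $\|\bar u_0\|_{H^1}\leq\liminf_n\|\bar u_n\|_{H^1}$ by weak lower semicontinuity, so this is automatic); and the step $\varepsilon_n\to 0$ uses the joint continuity of $G$ on the compact set $u_0([0,T])\times[0,1]$, which is part of the standing hypotheses on $G$ in Section 4, so it is legitimate.
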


The following so-called tail estimates will be used in order to establish compactness properties of the family of semiflows  $\{\Psi^{(s)}\}_{s\in [0,1]}$ -- compare \cite[Prop. 3.7 and Cor. 3.9]{Cw-Kr-2019}.
\begin{proposition}\label{prop-tail} 
Let us assume that there is a potential $a=a_\infty+a_0$, where $a_\infty\in L^\infty (\R^N)$ and $a_0$ is a finite sum of potentials belonging to $L^p (\R^N)$ with $p$ as in the condition $(KR)$ (possibly with different exponents $p$ for each component) and $b\in L^2(\R^N)$ such that, for any $s\in [0,1]$ and $u\in H^1(\R^N)$, the following inequality is satisfied 
\begin{equation}\label{ineq-11ab}
u(x) \cdot G(u,s)(x) \leq a(x)|u(x)|^2 + b(x)|u(x)| \ \text{ for a.e. } \ x\in\R^N.
\end{equation}
If $\varrho(V_{\infty}-a_{\infty})>\lambda$  then, for any $R>0$, there exists $\ve>0$, a sequence $(\gamma_n)$ with $\gamma_n\to 0^+$ as $n\to\infty$ and $n_0\geq 1$, such that if $u:[t_0,t_1]\to H^1(\R^N)$ is a solution of the equation \eqref{eq-diff-g} for some $s\in [0,1]$ and
\begin{equation*}
\|u(t)\|_{H^1} \leq R \quad \text{for all} \ \ t\in [t_0, t_1],
\end{equation*}
then, for all $n\geq n_0$, we have
$$
\int_{\R^{N}}\phi_{n}|u(t)|^{2}\,dx \leq e^{-2\ve (t-t_0)} \|u(t_0)\|_{L^2}^2 + \gamma_n,\quad t\in[t_{0},t_{1}].
$$
The above constant $\ve>0$, the sequence $(\gamma_n)$ and the integer $n_{0}\ge 1$ depend only on $R>0$, the dimension $N\ge 1$ and the potentials $a$, $b$ and $V$.
\end{proposition}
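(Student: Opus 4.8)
The plan is to reduce the estimate to a scalar differential inequality for the localized mass $y_n(t):=\int_{\R^N}\phi_n|u(t)|^2\,dx$, feeding the abstract tail bound of Proposition~\ref{classic_tail_estimates} with the sign condition \eqref{ineq-11ab}, and then to close it by a Gronwall argument. The hypothesis $\varrho(V_\infty-a_\infty)>\lambda$ is what produces the exponential rate $\varepsilon$, while the vanishing remainder $\gamma_n$ comes from the decay of the $L^p$- and $L^2$-tails of $a_0$ and $b$.

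Concretely, fix $s\in[0,1]$ and a solution $u:[t_0,t_1]\to H^1(\R^N)$ of \eqref{eq-diff-g} with $\|u(t)\|_{H^1}\le R$. Then $u$ is a classical solution of \eqref{eq-diff-g1} with the continuous forcing $g(t):=G(u(t),s)$ (continuity from $(G1)$; the regularity $u\in C((t_0,t_1),H^2(\R^N))\cap C^1((t_0,t_1),L^2(\R^N))$ being the one enjoyed by trajectories of the semiflow $\Psi^{(s)}$), so Proposition~\ref{classic_tail_estimates} applies and gives, with its sequence $(\beta_n)$ and integer $n_0'$,
$$
\tfrac12\,y_n'(t)\le-\int_{\R^N}(V_\infty-\lambda)\phi_n|u(t)|^2\,dx+\beta_n R^2+\langle\phi_n u(t),G(u(t),s)\rangle_{L^2}
$$
for $t\in(t_0,t_1)$ and $n\ge n_0'$. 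Since $\phi_n\ge0$, \eqref{ineq-11ab} yields $\langle\phi_n u(t),G(u(t),s)\rangle_{L^2}\le\int_{\R^N}\phi_n a\,|u(t)|^2\,dx+\int_{\R^N}\phi_n b\,|u(t)|\,dx$; writing $a=a_\infty+a_0$ and absorbing the $a_\infty$-term into the $V_\infty$-term turns the right-hand side into $-\int(V_\infty-a_\infty-\lambda)\phi_n|u(t)|^2+\int\phi_n a_0|u(t)|^2+\int\phi_n b\,|u(t)|+\beta_nR^2$.

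Next, I would pick $\varepsilon>0$ with $2\varepsilon<\varrho(V_\infty-a_\infty)-\lambda$; by the definition of $\varrho$ there is $R_0>0$ with $V_\infty-a_\infty-\lambda\ge2\varepsilon$ a.e.\ on $\{|x|>R_0\}$, hence on $\supp\phi_n$ whenever $n>R_0$, so that $-\int(V_\infty-a_\infty-\lambda)\phi_n|u(t)|^2\le-2\varepsilon\,y_n(t)$. For the two remaining integrals I would estimate each $L^{p_i}$-summand of $a_0$ by H\"older, using the Sobolev embedding $H^1(\R^N)\subset L^{2p_i/(p_i-1)}(\R^N)$ (valid under $(KR)$, exactly as in the proof of Proposition~\ref{classic_tail_estimates}) together with the bound $\|u(t)\|_{H^1}\le R$, which gives $\int\phi_n a_0|u(t)|^2\le CR^2\rho_n$ with $\rho_n\to0$ as $n\to\infty$; and apply Young's inequality to get $\int\phi_n b\,|u(t)|\le\varepsilon\,y_n(t)+\tfrac1{4\varepsilon}\|\phi_n b\|_{L^2}^2$, where $\|\phi_n b\|_{L^2}\to0$ since $b\in L^2(\R^N)$. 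Collecting terms, $y_n'(t)\le-2\varepsilon\,y_n(t)+2\eta_n$ on $(t_0,t_1)$ with $\eta_n:=\beta_nR^2+CR^2\rho_n+\tfrac1{4\varepsilon}\|\phi_n b\|_{L^2}^2\to0^+$; integrating this inequality and using $y_n(t_0)\le\|u(t_0)\|_{L^2}^2$ together with the continuity of $y_n$ on $[t_0,t_1]$ yields the assertion with $\gamma_n:=\eta_n/\varepsilon$ and $n_0:=\max\{n_0',\lceil R_0\rceil+1\}$.

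The argument is uniform in $s\in[0,1]$ for free: $G$ does not enter Proposition~\ref{classic_tail_estimates} at all, and \eqref{ineq-11ab} holds for every $s$ with the same $a,b$; thus $\varepsilon$ depends only on $\varrho(V_\infty-a_\infty)-\lambda$ (hence on $V$ and $a$), while $(\gamma_n)$ and $n_0$ depend only on $R$, $N$, $a$, $b$ and $V$, as claimed. I expect the only delicate point to be purely quantitative bookkeeping --- checking that every remainder term genuinely vanishes as $n\to\infty$ --- and this is precisely where the integrability exponents prescribed by $(KR)$ are used, through the Sobolev embeddings that make the H\"older estimates of the $a_0$-tails meaningful.
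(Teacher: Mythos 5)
Your argument is correct and is essentially the paper's own proof: apply Proposition \ref{classic_tail_estimates} with $g(t)=G(u(t),s)$, insert the sign condition \eqref{ineq-11ab}, absorb $a_\infty$ into $V_\infty$ using $\varrho(V_\infty-a_\infty)>\lambda$, control the $a_0$-tails by H\"older plus the Sobolev embedding, and integrate the resulting scalar differential inequality. The only (cosmetic) deviation is treating the $b$-term by Young's inequality rather than Cauchy--Schwarz; there the remainder should be $\tfrac{1}{4\ve}\int_{\R^N}\phi_n b^2\,dx$ (or $\tfrac{1}{4\ve}\int_{\{|x|\ge n\}} b^2\,dx$) rather than $\tfrac{1}{4\ve}\|\phi_n b\|_{L^2}^2$, since $\phi_n^2\le\phi_n$, but both quantities vanish as $n\to\infty$, so the conclusion is unaffected.
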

\begin{proof} We shall carry out the proof with assuming that $a_0$ is in $L^p(\R^N)$ with $p$ like in the condition $(KR)$, an adaptation of the below estimates to the case where $a_0$ is a sum of such functionals is straightforward.\\
\indent By Proposition \ref{classic_tail_estimates}, there exist a sequence $(\beta_{n})$ with $\beta_{n}\to 0^+$ as $n\to\infty$ such that
\begin{align}\label{eq-p-1kl}
\frac{1}{2}\frac{d}{dt}\int_{\R^{N}}\phi_{n}|u(t)|^{2}\,dx  \!\leq\! -\!\int_{\R^{N}}(V_{\infty}-\lambda)\phi_{n}|u(t)|^{2}\,dx  + \beta_{n}\|u(t)\|_{H^{1}}^{2} + \langle \phi_{n}(t),G(u(t),s)\rangle_{L^{2}}
\end{align}
for $t\in[t_{0},t_{1}]$ and $n\ge 1$. Since $\varrho(V_{\infty}-a_{\infty})>\lambda$, we can choose $\ve>0$ and $n_{0}\ge 1$ such that, for all $n\geq n_0$,
\begin{align*}
\phi_{n}(x)(V_{\infty}(x) - a_{\infty}(x)- \lambda) \ge \ve \phi_{n}(x)\quad  \text{for a.a. } x\in\R^{N}.
\end{align*}
By the condition \eqref{ineq-11ab} we obtain
\begin{equation}
\begin{aligned}\label{ineq-j-3b}
\langle\phi_{n}u(t),G(u(t),s)\rangle_{L^{2}} \leq \int_{\R^{N}}\phi_{n} a_{\infty}|u(t)|^{2}\,dx + \int_{\R^{N}}\phi_{n}a_{0} |u(t)|^{2}\,dx  +  \int_{\R^{N}} \phi_{n}b|u(t)|\,dx.
\end{aligned}
\end{equation}
On the other hand, by the H\"older inequality and Sobolev embedding, we have
\begin{equation}
\begin{aligned}\label{eq-r2b}
&\int_{\R^{N}}\phi_{n}a_{0}|u(t)|^{2}\,dx \leq \|\phi_{n}a_{0}\|_{L^{p}}\|u(t)\|_{L^{2p/(p-1)}}^{2} \leq \|\phi_{n}a_{0}\|_{L^{p}}\|u(t)\|_{H^{1}}^{2}.
\end{aligned}
\end{equation}
Combining \eqref{eq-p-1kl}, \eqref{ineq-j-3b} and \eqref{eq-r2b} we obtain
\begin{align*}
 \frac{1}{2}\frac{d}{dt}\int_{\R^{N}}\phi_{n}|u(t)|^{2}\,dx 
 & \leq -\int_{\R^{N}}(V_{\infty}-a_{\infty}-\lambda)\phi_{n}|u(t)|^{2}\,dx 
+ (\beta_{n} + \|\phi_{n}a_{0}\|_{L^{p}})\|u(t)\|_{H^{1}}^{2} + \|\phi_{n}b\|_{L^{2}}\|u(t)\|_{H^{1}} \\
& \leq -\ve\int_{\R^{N}}\phi_{n}|u(t)|^{2}\,dx + (\beta_{n} + \|\phi_{n}a_{0}\|_{L^{p}})\|u(t)\|_{H^{1}}^{2} + \|\phi_{n}b\|_{L^{2}}\|u(t)\|_{H^{1}} \\
& \leq -\ve\int_{\R^{N}}\phi_{n}|u(t)|^{2}\,dx + \ve\gamma_{n}
\end{align*}
for $t\in[t_{0},t_{1}]$ and $n\ge n_0$, where we define $\gamma_{n}:=\ve^{-1}R^{2}(\beta_{n} + \|\phi_{n}a_{0}\|_{L^{p}}) + R\|\phi_{n}b\|_{L^{2}}$. Since $a_{0}\in L^{p}(\R^{N})$ and $b\in L^2(\R^n)$, we have $\gamma_{n}\to 0^+$ as $n\to\infty$. Integrating the above inequality, for any $n\ge n_{0}$, gives
\begin{align*}
\int_{\R^{N}}\phi_{n}|u(t)|^{2}\,dx & \leq e^{-2\ve(t-t_{0})}\int_{\R^{N}}\phi_{n}|u(t_{0})|^{2}\,dx + \gamma_{n}(1-e^{-2\ve(t-t_{0})}) \\
& \leq e^{-2\ve(t-t_{0})}\int_{\R^{N}}\phi_{n}|u(t_{0})|^{2}\,dx + \gamma_{n},
\end{align*}
which completes the proof. 
\end{proof}
As an immediate consequence we obtain the following admissibility result.
\begin{corollary} \label{cor-admissibility}
Let the assumptions of Proposition \ref{prop-tail} hold. If $\rho(V_{\infty}-a_{\infty})>\lambda$, then an arbitrary bounded set $M\subset H^1(\R^N)$ is admissible with respect to the family of semiflows $\{\Psi^{(s)}\}_{ s\in[0,1]}$.
\end{corollary}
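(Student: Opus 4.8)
The plan is to unwind the definition from Section~2 of admissibility with respect to the family $\{\Psi^{(s)}\}_{s\in[0,1]}$ and verify it by combining the tail estimate of Proposition~\ref{prop-tail} with the compactness of the Sobolev embedding on balls and the smoothing property from Proposition~\ref{semiflow-continuity-prop}(ii). So let $M\subset H^1(\R^N)$ be bounded, say $\|v\|_{H^1}\le R$ for all $v\in M$, and let $(t_n)$ in $[0,+\infty)$, $(x_n)$ in $H^1(\R^N)$ and $(s_n)$ in $[0,1]$ satisfy $t_n\to +\infty$, $s_n\to s_0$ and $\{\Psi^{(s_n)}_{t}(x_n)\mid t\in[0,t_n]\}\subset M$. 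Writing $u_n(t):=\Psi^{(s_n)}_{t}(x_n)$, we have $\|u_n(t)\|_{H^1}\le R$ for $t\in[0,t_n]$, and the goal is to extract an $H^1(\R^N)$-convergent subsequence of $(u_n(t_n))$.

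The key step I would prove first is the following $L^2$-precompactness claim: if $(\tau_n)$ is any sequence with $\tau_n\to +\infty$ and $0\le\tau_n\le t_n$, then $v_n:=u_n(\tau_n)$ has a subsequence converging in $L^2(\R^N)$ to some $v_0\in H^1(\R^N)$. Since $(v_n)$ is bounded in $H^1(\R^N)$, a subsequence converges weakly in $H^1(\R^N)$ to some $v_0\in H^1(\R^N)$, and by Rellich--Kondrachov $v_n\to v_0$ in $L^2(D(0,\rho))$ for every $\rho>0$. For the behaviour at infinity I would invoke Proposition~\ref{prop-tail} with this $R$ (its hypotheses being exactly the standing assumptions of the Corollary): it yields $\ve>0$, a sequence $(\gamma_m)$ with $\gamma_m\to 0^+$ and $m_0\ge1$ so that, for $m\ge m_0$,
\begin{equation*}
\int_{\R^N}\phi_m|v_n|^2\,dx\le e^{-2\ve\tau_n}\|x_n\|_{L^2}^2+\gamma_m\le e^{-2\ve\tau_n}R^2+\gamma_m ,
\end{equation*}
and since $\phi_m\equiv1$ on $\{|x|\ge m\sqrt2\}$ this bounds the $L^2$-tails of $v_n$ uniformly in $n$. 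Given $\eta>0$, choose $m\ge m_0$ with $\gamma_m<\eta$ and $\int_{|x|\ge m\sqrt2}|v_0|^2\,dx<\eta$, then $n$ large so that $e^{-2\ve\tau_n}R^2<\eta$; splitting $\|v_n-v_0\|_{L^2(\R^N)}^2$ over $\{|x|<m\sqrt2\}$ (where it tends to $0$ by the local convergence) and $\{|x|\ge m\sqrt2\}$ (where it is at most $6\eta$), and letting $\eta\to 0$, gives $v_n\to v_0$ in $L^2(\R^N)$.

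With the claim established, the Corollary follows by one application of smoothing. Apply the claim with $\tau_n:=t_n-1$ (legitimate for $n$ large, since then $0\le t_n-1\le t_n$ and $u_n(t_n-1)\in M$) to obtain, along a subsequence, $w_n:=u_n(t_n-1)\to w_0$ in $L^2(\R^N)$ with $w_0\in H^1(\R^N)$ and $(w_n)$ bounded in $H^1(\R^N)$. Since $u_n(t_n)=\Psi^{(s_n)}_{1}(w_n)$ and $s_n\to s_0$, Proposition~\ref{semiflow-continuity-prop}(ii), applied with $t=1$ (a compact subset of $(0,\infty)$), gives $u_n(t_n)=\Psi^{(s_n)}_{1}(w_n)\to\Psi^{(s_0)}_{1}(w_0)$ in $H^1(\R^N)$, which is the desired convergent subsequence, proving that $M$ is admissible with respect to $\{\Psi^{(s)}\}_{s\in[0,1]}$.

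The main obstacle is the $L^2(\R^N)$-precompactness claim: bounded subsets of $H^1(\R^N)$ are \emph{not} precompact even in $L^2(\R^N)$ because the Sobolev embedding is not compact on the unbounded domain $\R^N$, so soft compactness alone does not suffice. The tail estimate of Proposition~\ref{prop-tail}, which crucially uses the strict inequality $\varrho(V_\infty-a_\infty)>\lambda$ for the asymptotic bottom, is precisely what forces a long-surviving bounded trajectory to concentrate its $L^2$-mass in a fixed large ball and thereby restores compactness; the passage from $L^2$ to $H^1$ convergence is then the routine parabolic-smoothing step already packaged in Proposition~\ref{semiflow-continuity-prop}(ii).
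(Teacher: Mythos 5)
Your proof is correct and follows essentially the same route as the paper: Proposition \ref{prop-tail} gives uniform smallness of the $L^2$-tails along the long trajectories, Rellich--Kondrachov gives local compactness, and Proposition \ref{semiflow-continuity-prop}(ii) upgrades $L^2$-precompactness to $H^1$-convergence. The only difference is cosmetic: you make explicit the time shift (extracting the $L^2$-convergent subsequence at $t_n-1$ and then applying the smoothing over a time lag of $1$), a detail the paper compresses into the remark that it suffices to prove $L^2$-precompactness of the endpoints.
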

\begin{proof}
Let us assume that $(s_{m})$ in $[0,1]$, $(t_{m})$ in $[0,\infty)$ and $(\bar u_{m})$ in $H^{1}(\R^{N})$ are sequences such that $s_{m}\to s_{0}$ and $t_{m}\to \infty$ and $\{\Psi^{(s_{m})}_{t}(\bar u_{m}) \ | \ t\in[0,t_{m}]\} \subset M$ for $m\ge1$. We show that the set $\{\Psi^{(s_{m})}_{t_{m}}(\bar u_{m})\}$ is relatively compact in $H^{1}(\R^{N})$. Let us assume that $R>0$ is such that $M\subset B_{H^{1}}(0,R)$ and let $u_{m}:[0,\infty)\to H^{1}(\R^{N})$ be a solution of the semiflow $\Psi^{(s_{m})}$ starting at $\bar u_{m}$. In view of Proposition \ref{semiflow-continuity-prop} (ii) it is enough to check that the set $\{u_{m}(t_{m}) \ | \ n\ge 1\}$ is relatively compact in $L^{2}(\R^{N})$. To this end we use  Proposition \ref{prop-tail} to obtain $\ve>0$, a sequence $(\gamma_n)$ with $\gamma_n\to 0^+$ as $n\to\infty$ and $n_0\geq 1$, such that
\begin{equation*}
\int_{\R^{N}}\phi_{n}|u_{m}(t_{m})|^{2}\,dx \leq e^{-2\ve t_{m}} \|\bar u_{m}\|_{L^2}^2 + \gamma_n \leq e^{-2\ve t_{m}}R^{2} + \gamma_{n}
\end{equation*}
for all $n\geq n_0$ and $m\ge 1$. This together with the Rellich-Kondrachov theorem (applied for the balls of radius $n$ in $\R^N$) shows that the set $\{ u_{m}(t_{m})\}$ is relatively compact in $L^{2}(\R^{N})$ as desired. 
\end{proof}

\section{The non-resonant case}

In this section we shall prove Theorem \ref{noneresonat-case-theorem}. The following extended versions of the Conley index formulae \cite[Th. 3.3 and 3.6]{Prizzi-FM} will be crucial.
\begin{theorem}\label{noresonant-index-formulae}
Suppose $f:\R^N\times \R\to \R$ satisfies the conditions $(f1)$, $(f2)$, $(f3)$  and $\varrho (V_\infty-a_\infty)>\lambda$.\\
\noindent\makebox[5.5mm][r]{$(i)$} \parbox[t][][t]{168mm}{If there there exists a potential $\alpha$ satisfying $(PT)$ such that
$$
\lim_{u\to\, 0} \frac{f(x,u)}{u}=\alpha(x) \ \mbox{ for a.e. } \ x\in\R^N,
$$
$\varrho(V_\infty-\alpha_\infty)>\lambda$ 
and $\lambda \not \in \sigma (-\Delta+V-\alpha)$,
then $K_0:=\{0\}$ is an isolated invariant set with respect to the semiflow $\Phi$ and 
$$
h(\Phi, K_0)= \Sigma^{d^{-}(V-\alpha,\,\lambda)}.
$$}\\[5pt]
\noindent\makebox[5.5mm][r]{$(ii)$} \parbox[t][][t]{168mm}{If there exists a potential $\omega$ satisfying $(PT)$ such that
$$  
\lim_{|u|\to \infty} \frac{f(x,u)}{u}=\omega (x) \ \mbox{ for a.e. } \ x\in\R^N, 
$$
$\varrho(V_\infty-\omega_\infty)>\lambda$ and
$\lambda \not \in \sigma (-\Delta+V-\omega)$,
then the set $K_\infty (\Phi)$, consisting of all $\bar u\in H^1(\R^N)$ such that $u(0)=\bar u$ for some bounded solution $u:\R\to H^1(\R^N)$ of $\Phi$, is a compact isolated invariant set with respect to $\Phi$ and 
$$
h(\Phi, K_\infty (\Phi)) = \Sigma^{d^{-}(V-\omega,\,\lambda)}.
$$}
\end{theorem}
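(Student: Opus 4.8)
The plan is to reduce both statements to the linear Conley index computation of Theorem \ref{linear-conley-index} by an admissible homotopy that deforms the nonlinear semiflow $\Phi$ to the linear semiflow associated with $-\Delta+V-\alpha-\lambda$ (for part $(i)$) or $-\Delta+V-\omega-\lambda$ (for part $(ii)$). For part $(i)$, I would first localize near $0$: since $f(x,u)/u\to\alpha(x)$ as $u\to 0$, write $f(x,u)=\alpha(x)u+g(x,u)$ with $g(x,u)=o(|u|)$ uniformly (in the appropriate $L^r$-sense granted by $(PT)$), and introduce the parameterized right-hand side $G(u,s):=\alpha u+s\,g(u)$ (suitably cut off outside a small ball $B_{H^1}(0,r)$ so that $(G1)$, $(G2)$ hold and so that the linearization-type inequality \eqref{ineq-11ab} holds with $a_\infty$ replaced by a potential whose asymptotic bottom still beats $\lambda$; this uses $\varrho(V_\infty-\alpha_\infty)>\lambda$ and $\varrho(V_\infty-a_\infty)>\lambda$ together with the smallness of the correction near $0$). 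The resulting family $\{\Psi^{(s)}\}$ connects $\Phi$ restricted to a small neighborhood of $0$ (at $s=1$) to the affine-linear semiflow with generator $A-\alpha$ (at $s=0$). By Corollary \ref{cor-admissibility} a fixed small ball is admissible w.r.t. this family, and one checks that $\{0\}$ stays the maximal bounded invariant set inside it for every $s$ (here $\lambda\notin\sigma(-\Delta+V-\alpha)$ forces $\Ker(A-\alpha)=\{0\}$, so the linear endpoint has $\{0\}$ isolated, and the smallness of $g$ plus the gradient-like structure propagates this to all $s$). Then (H4) gives $h(\Phi,\{0\})=h(\text{linear semiflow for }A-\alpha,\{0\})=\Sigma^{\dim X_-(A-\alpha)}=\Sigma^{d^-(V-\alpha,\lambda)}$ by Theorem \ref{linear-conley-index}, since $\dim$ of the negative spectral subspace of $-\Delta+V-\alpha-\lambda$ is exactly the number (with multiplicity) of eigenvalues of $-\Delta+V-\alpha$ below $\lambda$.

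For part $(ii)$ the strategy is dual: work at infinity. First establish that $K_\infty(\Phi)$ is bounded in $H^1(\R^N)$ — this is the only place where genuine work beyond bookkeeping is needed. I would argue by contradiction: if there were full bounded solutions $u_m$ of $\Phi$ with $\|u_m(0)\|_{H^1}\to\infty$, rescale $v_m:=u_m/\|u_m(0)\|_{H^1}$; using $f(x,u)/u\to\omega(x)$ at infinity, the sublinear remainder $f(x,u)-\omega(x)u$ contributes a term that vanishes after rescaling in the relevant $L^2$-topology, the tail estimates of Proposition \ref{prop-tail} (applicable because $\varrho(V_\infty-\omega_\infty)>\lambda$ and \eqref{ineq-11ab} holds for the deformed nonlinearity) give compactness in $L^2$, and one passes to a limit $v$ which is a nonzero bounded full solution of the linear equation $\dot v=-(A-\omega)v$; but $\lambda\notin\sigma(-\Delta+V-\omega)$ means $A-\omega$ is hyperbolic (zero kernel, so $Y=H^1$ and the only bounded full solution is $0$ by Theorem \ref{linear-conley-index}), a contradiction. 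Once $K_\infty(\Phi)$ is bounded, admissibility (Corollary \ref{cor-admissibility}) makes it a compact isolated invariant set inside any large enough ball $B:=B_{H^1}(0,\bar R)$, with $K_\infty(\Phi)=\mathrm{Inv}_\Phi(B)\subset\mathrm{int}\,B$.

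To compute $h(\Phi,K_\infty(\Phi))$ I would homotope on the large ball $B$: set $G(u,s):=\omega u+s\,(F(u)-\omega u)$ (again modified outside $B$ and near $0$ only by a compactly supported bounded correction so that $(G1)$, $(G2)$ and the tail inequality \eqref{ineq-11ab} hold uniformly in $s$, with asymptotic bottom still exceeding $\lambda$ by $\varrho(V_\infty-\omega_\infty)>\lambda$). The family $\{\Psi^{(s)}\}$ connects $\Phi$ (at $s=1$) to the linear semiflow for $A-\omega$ (at $s=0$); by the same contradiction-plus-rescaling argument as above, $\mathrm{Inv}_{\Psi^{(s)}}(B)$ remains bounded away from $\partial B$ for every $s$, so $B$ is an admissible isolating neighborhood for the whole family, and (H4) yields $h(\Phi,K_\infty(\Phi))=h(\text{linear semiflow for }A-\omega,\{0\})=\Sigma^{\dim X_-(A-\omega)}=\Sigma^{d^-(V-\omega,\lambda)}$ via Theorem \ref{linear-conley-index}. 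The main obstacle is the boundedness/isolation of $K_\infty(\Phi)$ uniformly along the homotopy: one must ensure the rescaling argument survives when $s$ varies, which requires the correction $F(u)-\omega u$ to be controlled in $L^2$ after dividing by $\|u_m(0)\|_{H^1}$ — this hinges on the $(PT)$-type integrability of the sublinear remainder and the interpolation/Sobolev embeddings already used in the Remark and in Proposition \ref{prop-tail}, so no new analytic input is needed beyond assembling these estimates carefully and uniformly in $s$.
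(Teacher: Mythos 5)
Your part (i) hinges on writing $f(x,u)=\alpha(x)u+g(x,u)$ with $g=o(|u|)$ ``uniformly, in the appropriate $L^r$-sense granted by $(PT)$'', and you use this smallness (plus a cut-off) to claim that $\{0\}$ stays the maximal invariant set in a fixed small $H^1$-ball for every $s$, which is the crux of the whole computation. This is a genuine gap: the hypotheses give only the pointwise a.e.\ limit $f(x,u)/u\to\alpha(x)$ together with the Lipschitz bound $(f2)$; $(PT)$ concerns $l$ and provides no smallness of the remainder. Indeed the induced operator estimate $\|F(u)-m_\alpha u\|_{L^2}=o(\|u\|_{H^1})$ is false in general here: take $f(x,u)=u\min(|x|\,|u|,1)$, which satisfies $(f1)$--$(f3)$ and $(PT)$ with $\alpha\equiv 0$, and test it on bumps $u_n=n^{-1}\chi(\cdot-z_n)$ with $|z_n|=n^2$; then $\|F(u_n)\|_{L^2}\simeq\|u_n\|_{L^2}$ and the ratio to $\|u_n\|_{H^1}$ does not tend to $0$, because the convergence of $f(x,u)/u$ is not uniform in $x$ and mass escaping to spatial infinity destroys the little-o. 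Consequently neither your isolation claim (``smallness of $g$ plus the gradient-like structure'') nor even the equilibrium equation $(-\Delta+V-\alpha-\lambda)u=s\,g(u)$ can be dispatched by invertibility plus smallness. Avoiding such differentiability-type assumptions is exactly what distinguishes this theorem from Prizzi's result. The paper's proof replaces your step by a blow-down argument along the straight-line homotopy $G(u,s)=(1-s)F(u)+s\alpha u$ (no cut-off is needed, since $(f3)$ yields \eqref{ineq-11ab} for the rescaled nonlinearities): assuming arbitrarily small nonzero bounded full solutions $u_m$ of $\Psi^{(s_m)}$, one rescales $v_m=u_m/\|u_m(0)\|_{H^1}$, uses Proposition \ref{prop-tail} to get uniform tail smallness and hence $L^2$-precompactness of $\{v_m(t)\}$, passes to the limit using the a.e.\ convergence of the rescaled nonlinearities and a convergence theorem for the semiflows, and obtains a nonzero bounded full solution of $\dot v=-Av+m_\alpha v$; the gradient-like structure of this limit flow then produces a nonzero element of $\Ker(-\Delta+V-\alpha-\lambda)$, contradicting $\lambda\notin\sigma(-\Delta+V-\alpha)$. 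The tail estimates are precisely what neutralize the non-uniformity in $x$ that defeats your little-o step.

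Your part (ii) is essentially the paper's route: boundedness of the invariant sets along the homotopy by a contradiction/rescaling argument with tail estimates, then the straight-line deformation to the linear semiflow, admissibility, (H4) and Theorem \ref{linear-conley-index}. Two remarks: the cut-offs you introduce are unnecessary, since $(f3)$ already gives \eqref{ineq-11ab} for $(1-s)F(u)+s\omega u$ uniformly in $s$ (with $\tilde a_\infty=\max\{a_\infty,\omega_\infty\}$), and modifying $F$ only complicates checking $(G1)$, $(G2)$ and the pointwise inequality; and your phrase ``the sublinear remainder vanishes after rescaling in the relevant $L^2$-topology'' must again be implemented through the compactness-plus-a.e.-convergence limit passage (as in (i)), not through any uniform estimate, since none is available under the stated hypotheses.
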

\begin{proof}
(i) Consider the map $G:H^1(\R^N)\times [0,1] \to L^2(\R^N)$ given, for any $u \in H^1(\R^N)$ and $s\in [0,1]$, by
$$
[G(u,s)](x):= (1-s) F(u)(x) + s \alpha(x) u(x) \quad \text{for a.e. } x\in\R^N,
$$
where $F$ is given by \eqref{Nemytzki_f}. It can be easily seen that $G$ satisfies conditions $(G1)$ and $(G2)$ from Section 4. Let $\{\Psi^{(s)}\}_{s\in [0,1]}$, be the family of semiflows generated by the equation \eqref{eq-diff-g}. We claim that there exists $r_0>0$ such that the set
$N_r :=\{ u\in H^1(\R^N) \ \mid \ \|u\|_{H^1} \leq r\}$ has the property 
\begin{equation}\label{eq-inv-set}
\mathrm{Inv}_{\Psi^{(s)}}\,N_{r} = \{0\} \ \text{ for all } r\in (0,r_0] \ \text{ and } s\in [0,1].
\end{equation}
Indeed, if we suppose to the contrary, then we get a sequence of $(s_m)$ in $[0,1]$ such that, for any $m\geq 1$, there exists a bounded nontrivial full solution $u_m$ of the semiflow $\Psi^{(s_m)}$ such that $$\|u_m(0)\|_{H^1}\geq (1-1/2m)\cdot\sup_{t\in\R} \|u_m(t)\|_{H^1} > 0 \quad \text{and}\quad \rho_m:=\|u_m(0)\|_{H^1}\to 0^+\quad \text{as} \ \  m\to\infty.$$
Note that, for each $m\geq 1$, $v_m:=\rho_m^{-1} u_m$ solves the equation
$$
\dot v(t) = -A v(t) + F_m (v(t)), \quad t\in\R,
$$
with $F_m:H^1(\R^N) \to L^2(\R^N)$ given, for any $u\in H^1(\R^N)$, by
$$
[F_m(u)](x) := f_m(x,u(x))\quad  \text{for a.e. } x\in\R^N
$$
where $f_m:\R^N\times \R\to\R$ is defined by
$$
f_m (x,u):=(1-s_m) \rho_m^{-1} f(x,\rho_m u) + s_m \alpha(x)u \ \mbox{ for a.e. }\ x\in\R^N \mbox{ and } u\in\R.
$$
Clearly
\begin{equation}\label{v_m-is-H1-bounded}
\|v_m(0)\|_{H^1}= 1   \  \ \text{ and  } \ \ \sup_{t\in\R} \|v_m(t)\|_{H^1}\leq (1-1/2m)^{-1} \leq 2 \  \text{ for all } \ m\geq 1.
\end{equation}
On the other hand, by the assumption $(f3)$, for all $m\geq 1$, we have 
\begin{align*}
uf_m (x,u)& =(1-s_m) u\rho_m^{-1} f(x,\rho_m u) + s_m \alpha(x)|u|^{2} \\
& \leq (1-s_m)a(x)|u|^{2} + s_m \alpha(x)|u|^{2} \leq \tilde a (x)|u|^{2},
\end{align*}
where $\tilde a = \tilde a_{\infty}+\tilde a_{0}$ with $\tilde a_{\infty}:=\max\{a_{\infty},\alpha_{\infty}\}$ and $\tilde a_{0}:=a_{0} + \alpha_{0}$. Here $a_0\in L^p(\R^N)$ and $\alpha_0\in L^r(\R^N)$, where the exponents  $p=p(a_0)>2$ and $r=r(\alpha_0)\ge 2$ are restricted by the conditions $(KR)$ and $(PT)$, respectively. 
Since $\varrho (V_\infty -a_\infty)>\lambda$ and $\varrho (V_\infty-\alpha_\infty)>\lambda$ we infer that 
$$
\varrho \left(V_\infty - \tilde a_\infty\right)>\lambda.
$$
Hence, in view of Proposition \ref{prop-tail} with $G(u,s)=F_m(u)$ for $u\in H^1(\R^N)$ and $s\in [0,1]$, we see that there exist $\varepsilon>0$, sequence $(\gamma_n)$ with $\gamma_n\to 0^+$ and $n_0\geq 1$ (independent of $m$) such that, for all $m\geq 1$ and $n\geq n_0$, we have
$$
\int_{\R^N\setminus B(0,n)} |v_m (t)|^2\,d x \leq e^{-2\varepsilon (t-r)}\|v_{m}(r)\|^{2}  + \gamma_n \leq 4e^{-2\varepsilon (t-r)}+\gamma_n,\quad r<t,
$$
where in the last inequality we used the estimate \eqref{v_m-is-H1-bounded}. Fixing $t\in\R$ and passing with $r \to -\infty$, we get
$$
\int_{\R^N\setminus B(0,n)} |v_m (t)|^2\,d x \leq \gamma_n,\quad n\ge n_{0}, \ m\ge 1.
$$
Since the sequence $(v_m(t))$ is bounded in $H^1(\R^N)$, by the Rellich-Kondrachov compact embedding theorem, we infer that the set of restricted functions $\{v_m(t)|_{B(0,n)} \ | \ m\ge 1\}$ is relatively compact in $L^2(B(0,n))$ for any $n\ge 1$. Combining this with the fact that $\gamma_n\to 0^{+}$ we see that, for any $t\in\R$, the set $\{ v_m(t)\}_{m\geq 1}$ is relatively compact in $L^2(\R^N)$. Take any $\tau>0$ and let $\big( v_{m_k}(t-\tau)\big)$ be a subsequence of $(v_m(t-\tau))$ such that $v_{m_k}(t-\tau)\to \bar v$ in $L^2(\R^N)$. Using the weak compactness argument we can suppose that $\bar v\in H^1(\R^N)$. Since, for all $u\in\R$ and a.e. $x\in\R^N$,
$$
f_m (x,u) \to \alpha(x)u, \quad m\to \infty,
$$
we can use \cite[Proposition 4.3]{Cw-Luk-2021} taking into consideration \eqref{v_m-is-H1-bounded}, to see that
$v_{m_k}$ converges in $H^1(\R^N)$, uniformly on compact subsets of $(t-\tau,\infty)$, to the solution $v:[t-\tau, \infty)\to H^1(\R^N)$ of
\begin{equation}\label{eq-a-alpha}
\dot v(t)=-A v(t)+ m_\alpha v(t), \quad t>0,
\end{equation}
where $m_\alpha:H^1(\R^N)\to L^2(\R^N)$ is given by $m_\alpha(u)(x):=\alpha(x)u(x)$ for a.e. $x\in\R^N$ and all $u\in H^1(\R^N)$. Since $\tau>0$ is arbitrary, we can combine this with a diagonal type argument to obtain the existence of a bounded solution $v:\R\to H^1(\R^N)$ of the equation \eqref{eq-a-alpha} such that 
$\|v(0)\|_{H^1}= 1$. If we consider the functional $\mathcal{E}_\alpha:H^1(\R^N)\to \R$, given by 
$$
\mathcal{E}_\alpha (u):=\frac{1}{2}\int_{\R^N} \big(|\nabla u|^2 +(V(x)-\alpha(x)-\lambda)|u|^2\big)\, \,d x,\quad u\in H^{1}(\R^{N}),
$$
then, by Proposition \ref{prop-fradc-d-c}, we have
\begin{align*}
\frac{\d}{\d t} \mathcal{E}_\alpha (v(t)) =-\int_{\R^N} |\dot v(t)|^{2} \,d x \ \text{ for all } \ t\in\R, 
\end{align*}
i.e. the semiflow generated by \eqref{eq-a-alpha} is gradient-like with respect to the Lyapunov functional $\mathcal{E}_\alpha$. Hence, since $v$ is nontrivial and bounded, there exists a nontrivial equilibrium point in the set $\alpha(u)\cup\omega(u)$. Then we have $\Ker (-A+m_\alpha)=\Ker(-\Delta+V-\alpha-\lambda)\neq \{0\}$, which is a contradiction proving the existence of $r_0>0$ such that
\eqref{eq-inv-set} holds.\\
\indent Consequently from \eqref{eq-inv-set} it follows that $\mathrm{Inv}_{\Psi^{(s)}}\,N_{r} = \{0\}\subset \mathrm{int}\,N_{r}$ for $s\in[0,1]$ and $r\in(0,r_{0}]$ and furthermore, in view of Proposition \ref{semiflow-continuity-prop} and Corollary \ref{ineq-11ab}, the set $N_r$ is admissible with respect to the family of semiflows $\{\Psi^{(s)}\}_{s\in [0,1]}$. Hence, we can  apply the homotopy invariance property (H4) to get
\begin{equation}\label{eq-h-ind}
h(\Phi, K_0) = h(\Psi^{(0)}, K_0) = h(\Psi^{(1)}, K_0).
\end{equation}
Since $\Psi^{(1)}$ is the semiflow generated by \eqref{eq-a-alpha} and
$\Ker (-A+m_\alpha)=\{0\}$, we infer from Theorem \ref{linear-conley-index}, that
$$
h(\Psi^{(1)}, K_0) = \Sigma^{d^{-}(V-\alpha,\,\lambda)},
$$
which together with \eqref{eq-h-ind} ends the proof of the assertion (i).\\
\indent (ii) In this case we consider the family of semiflows $\{\Psi^{(s)}\}_{s\in[0,1]}$ determined by the equation \eqref{eq-diff-g}, where 
the map $G$ is given, for any $u \in H^1(\R^N)$ and $s\in [0,1]$, by the formula
$$
[G(u,s)](x):= (1-s) F(u)(x) + s \omega(x) u(x) \quad \text{for a.e. } x\in\R^N.
$$
We claim that there exists $R>0$ such that if $u$ is a bounded in $H^1(\R^N)$ solution of the semiflow $\Psi^{(s)}$ for some $s\in[0,1]$, then $\|u(t)\|_{H^{1}} < R$ for all $t\in\R$. If we suppose to the contrary, then there is a sequence of $(s_m)$ in $[0,1]$ such that, for any $m\geq 1$, there exists a bounded solution $u_{m}$ of the semiflow $\Psi^{(s_m)}$ such that $\rho_m:=\sup_{t\in\R} \|u_m(t)\|_{H^1}\to \infty$ and $\|u_m(0)\|_{H^1}\geq \rho_m-1$. Taking $v_m := \rho_m^{-1} u_m$ for $m\geq 1$, we can show in a similar manner as in the proof of (i), that $\Ker(-\Delta+V-\omega-\lambda)\neq \{0\}$, which is a contradiction.\\
\indent Therefore, if we take 
$N_R :=\{ u\in H^1(\R^N) \, \mid \, \|u\|_{H^1} \leq R\}$, then we have 
\begin{equation*}
K_\infty (\Phi) \subset N_{R}\quad\text{and}\quad K_{s}:=\mathrm{Inv}_{\Psi^{(s)}}\,N_{R} \subset \mathrm{int}\,N_{R}\ \text{ for all } \ s\in [0,1].
\end{equation*}
Similarly as in the point $(i)$, Proposition \ref{semiflow-continuity-prop} and Corollary \ref{ineq-11ab} show that the set $N_R$ is admissible with respect to the family of semiflows $\{\Psi^{(s)}\}_{s\in [0,1]}$. Hence, we can  apply the homotopy property (H4) to get
\begin{equation}\label{eq-h-ind2}
h(\Phi, K_\infty(\Phi)) = h(\Psi^{(0)}, K_0) = h(\Psi^{(1)}, K_1).
\end{equation}
Since $\Psi^{(1)}$ is the semiflow generated by the equation \eqref{eq-a-alpha} with $\alpha$ replaced by $\omega$ and $\Ker (-A+m_\omega)=\{0\}$, we can again apply Theorem \ref{linear-conley-index} to obtain 
\begin{equation*}
h(\Psi^{(1)}, K_1) = \Sigma^{d^{-}(V-\omega,\,\lambda)}.
\end{equation*}
Combining the above equation with \eqref{eq-h-ind2} completes the proof of the assertion (ii).
\end{proof}

\begin{proof}[Proof of Theorem \ref{noneresonat-case-theorem}]
From Theorem \ref{noresonant-index-formulae} (ii) it follows that $K_{\infty}(\Phi)$ is an isolated invariant set
with respect to $\Phi$ and
$$
h(\Phi, K_\infty(\Phi)) = \Sigma^{d^-(V-\omega,\,\lambda)}.
$$
On the other hand, by Theorem \eqref{noresonant-index-formulae} (i), $K_0=\{0\}$ is also an isolated $\Phi$-invariant set and 
$$
h(\Phi, K_0)=\Sigma^{d^-(V-\alpha,\,\lambda)}.
$$
Since, by assumption, $d^-(V-\alpha,\lambda) \neq d^-(V-\omega,\lambda)$, we infer that 
$$
\overline 0 \neq \Sigma^{d^-(V-\alpha,\,\lambda)} = h(\Phi, K_0 ) \neq h(\Phi, K_\infty(\Phi)) = \Sigma^{d^-(V-\omega,\,\lambda)} \neq \overline{0}.
$$
This together with Theorem \ref{rybakowski-irreducible}, implies the existence of a nontrivial full bounded solution $u:\R \to H^1(\R^N)$ for the semiflow $\Phi$ such that either $\alpha(u)\subset K_0$ or $\omega(u)\subset K_0$. Recall that, in view of Proposition  \ref{prop-fradc-d-c}, $\Phi$ is gradient-like, which implies that
both $\alpha(u)$ and $\omega(u)$ consists of equilibria. Since $u$ is nontrivial and bounded and hence either $\alpha(u)$ or $\omega(u)$ contains a nonzero equilibrium point. Then the zero solution is in the remaining limit set of $u$ and the proof is completed. 
\end{proof}

\section{The resonant case}

Throughout the whole section we assume that at least the assumptions of Theorem \ref{30042019-1204} are satisfied, i.e.
$f$ satisfies $(f1)'$ and $(f2)$, $V$ is a Kato-Rellich type potential, $\varrho (V_\infty)>\lambda$ and $\lambda\in \sigma(-\Delta+V)$. Let us define $G:H^1(\R^N)\times [0,1]\to L^2(\R^N)$ by
\begin{equation}\label{def-g-r}
G(u, s): = P F (P u+s Q u) + s Q F(P u+sQu) \quad \text{for} \ \ u\in H^1(\R^N), \ s\in [0,1],
\end{equation}
where $P$ and $Q=Q_{+}+Q_{-}$ are as in Section 3. 
\begin{lemma}
The mapping $G$ satisfies conditions $(G1)$, $(G2)$ and there exists $b\in L^2(\R^N)$ such that, for all $u\in H^1(\R^N)$,
\begin{equation}\label{b-bound-for-G}
|G(u,s)(x)|\leq b(x) \ \text{ for a.e. } x\in\R^N.
\end{equation}
\end{lemma}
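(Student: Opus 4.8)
The plan is to reduce everything to the boundedness hypothesis $(f1)'$ on $f$ together with the finite dimensionality of $X_0=\Ker A$. First I would record the elementary structural facts: since $X=X_-\oplus X_0\oplus X_+$ and $Q=Q_-+Q_+$, one has $P+Q=I$ on $X=L^2(\R^N)$; moreover $P$ has finite rank with range $X_0\subset H^2(\R^N)$, so $P$ is bounded from $L^2(\R^N)$ into $H^2(\R^N)$, hence bounded on $H^1(\R^N)$, say $\|Pu\|_{H^1}\le c_0\|u\|_{H^1}$. Writing $Qu=u-Pu$, for every $s\in[0,1]$ the element $Pu+sQu=su+(1-s)Pu$ lies in $H^1(\R^N)$ with $\|Pu+sQu\|_{H^1}\le(1+c_0)\|u\|_{H^1}=:c_1\|u\|_{H^1}$, so $h:=F(Pu+sQu)\in L^2(\R^N)$ is well defined, and, using $P+Q=I$ once more, the definition \eqref{def-g-r} rewrites as
\[
G(u,s)=Ph+sQh=s\,h+(1-s)Ph .
\]

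Next I would verify $(G1)$. Since $P$ and $Q=Q_-+Q_+$ are orthogonal projections of $L^2(\R^N)$, they have operator norm $\le 1$ there, so from the displayed form of $G$ and the Lipschitz estimate \eqref{eq-lip} for $F$ (valid because $f$ satisfies $(f2)$, hence $l$ satisfies $(PT)$),
\[
\|G(u,s)-G(v,s)\|_{L^2}\le 2\|F(Pu+sQu)-F(Pv+sQv)\|_{L^2}\le 2c_1\big(\|l_\infty\|_{L^\infty}+\|l_0\|_{L^r}\big)\|u-v\|_{H^1}
\]
for all $u,v\in H^1(\R^N)$ and $s\in[0,1]$, which in fact yields a global Lipschitz constant. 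For \eqref{b-bound-for-G} I would invoke $(f1)'$: $|h(x)|=|f(x,(Pu+sQu)(x))|\le m(x)$ a.e., so $\|h\|_{L^2}\le\|m\|_{L^2}$. Fixing an orthonormal basis $\psi_1,\dots,\psi_k$ of $X_0$, each $\psi_i\in H^2(\R^N)$ (and, by \eqref{est-eigen}, decays exponentially), so $w:=\sum_{i=1}^{k}|\psi_i|\in L^2(\R^N)$; since $Ph=\sum_{i=1}^{k}(h,\psi_i)_{L^2}\psi_i$ and $|(h,\psi_i)_{L^2}|\le\|h\|_{L^2}\le\|m\|_{L^2}$, I get the pointwise bound $|Ph(x)|\le\|m\|_{L^2}\,w(x)$. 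Hence
\[
|G(u,s)(x)|\le s|h(x)|+(1-s)|Ph(x)|\le m(x)+\|m\|_{L^2}\,w(x)=:b(x)\in L^2(\R^N),
\]
which is \eqref{b-bound-for-G}; and $(G2)$ then follows at once, since $\|G(u,s)\|_{L^2}\le\|h\|_{L^2}+\|Ph\|_{L^2}\le 2\|m\|_{L^2}$ uniformly in $u$ and $s$, so one may take the constant $C=2\|m\|_{L^2}$.

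The computation is essentially routine triangle inequalities; the only point that genuinely needs care is the two properties of the spectral projection $P$ used above — that $P$ maps $L^2(\R^N)$ boundedly into $H^2(\R^N)$ (so that $Pu+sQu\in H^1$ and $P$ is $H^1$-bounded), and that it obeys the pointwise $L^2$-estimate $|Ph(x)|\le\|m\|_{L^2}w(x)$ with $w\in L^2(\R^N)$. Both are consequences of $\dim X_0<\infty$ together with $X_0\subset H^2(\R^N)$ and the exponential decay \eqref{est-eigen} of the eigenfunctions, all recorded in Section~\ref{sec-schr-op}; with these in hand nothing else is delicate.
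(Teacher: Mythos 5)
Your proof is correct and follows essentially the same route as the paper: the rewriting $G(u,s)=sF(Pu+sQu)+(1-s)PF(Pu+sQu)$, the use of $(f1)'$ for the $L^2$ and Lipschitz bounds, and the pointwise estimate of $PF(Pu+sQu)$ via a finite basis of $X_0$ to produce $b\in L^2(\R^N)$. The only differences are cosmetic (orthonormal versus orthogonal basis, your explicit check that $P$ is $H^1$-bounded), and note that the exponential decay \eqref{est-eigen} is not actually needed, since $\psi_i\in L^2(\R^N)$ already gives $w\in L^2(\R^N)$.
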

\begin{proof}
Indeed, for $(G1)$ note that, by \eqref{eq-lip}, there is $L>0$ such that, for all $u,v\in H^1(\R^N)$ and $s\in [0,1]$,
\begin{align*}
\|G(u,s)-G(v,s)\|_{L^{2}} & \leq 2\|F (P u+s Q u) - F (P v+s Q v)\|_{L^{2}} \\
& \leq 2L\|P (u-v)+s Q (u-v)\|_{L^{2}}
 \leq 4L\|u-v\|_{L^{2}}.
\end{align*}
Furthermore, by $(f1)'$, we have
\begin{align}\label{eq-s-g-1}
\|G(u,s)\|_{L^{2}} &  \leq 
2\|F (P u+s Q u)\|_{L^{2}} \leq 2\|m\|_{L^{2}},\quad u\in H^{1}(\R^{N}), \ s\in [0,1],
\end{align}
which implies $(G2)$.\\ 
\indent In order to verify the property \eqref{b-bound-for-G} observe that, given $u\in H^{1}(\R^{N})$ and $s\in [0,1]$, we have
\begin{equation}\label{eq-11bb}
G(u, s) = s F(P u+sQu) + (1-s) P F (P u+s Q u) \quad \text{ almost everywhere on }\  \R^{N}.
\end{equation}
If we take an orthogonal basis of $X_0$ consisting of elements 
$\varphi_k$ for $k=1,\dots, d$, where $d=\dim X_0$, then
\begin{align}\label{def-q-p}
P F (P u+s Q u) = \sum_{k=1}^{d} e_{k}(u,s)\varphi_{k}, \quad u\in H^{1}(\R^{N}), \ s\in [0,1],
\end{align}
where 
\begin{align*}
e_{k}(u,s) := \int_{\R^{N}} F (P u+s Q u)\varphi_{k}\,dx,\quad 1\leq k\leq d.
\end{align*}
Clearly, by $(f1)'$, for all $u\in H^1(\R^N)$ and $s\in [0,1]$, 
\begin{align*}
|e_{k}(u,s)| \leq \|F (P u+s Q u)\|_{L^{2}} \|\varphi_{k}\|_{L^{2}} \leq \|m\|_{L^{2}} \|\varphi_{k}\|_{L^{2}}.
\end{align*}
and consequently, for a.e. $x\in\R^N$,
\begin{align}\label{eq-pp1}
|P F (P u+s Q u)(x)| \leq \sum_{k=1}^{d} |e_{k}(u,s)||\varphi_{k}(x)|\leq \|m\|_{L^{2}}\sum_{k=1}^{d} \|\varphi_{k}\|_{L^{2}} |\varphi_{k}(x)|.
\end{align}
Combining \eqref{eq-11bb} and $(f1)'$ gives, for all $u\in H^1(\R^N)$ and a.e. $x\in\R^N$,
\begin{align*}
|G(u,s)(x)| \leq |m(x)| + |P F (P u+s Q u)(x)|.
\end{align*}
and hence, by the inequality \eqref{eq-pp1}, the condition \eqref{b-bound-for-G} holds with $b\in L^{2}(\R^{N})$ given by 
$b(x):=|m(x)| + \|m\|_{L^{2}} \sum_{k=1}^{d} \|\varphi_{k}\|_{L^{2}} |\varphi_{k}(x)|$ for a.e. $x\in\R^{N}$. 
\end{proof}

Let us consider the family of semiflows $\{\Psi^{(s)}\}_{s\in [0,1]}$, generated by the equation \eqref{eq-diff-g} with $G$ given by the formula \eqref{def-g-r}. Since $G(u,1)=F(u)$ for $u\in H^1(\R^N)$, we see that $\Psi^{(1)}=\Phi$ where $\Phi$ is the semiflow defined by $(P)_\lambda$. Since $G$ satisfies \eqref{b-bound-for-G}, the conditions \eqref{ineq-11ab} hold and we can apply Corollary \ref{cor-admissibility} to see that any bounded subset of $H^1(\R^N)$ is admissible with respect to the family of semiflows $\Psi=\{\Psi^{(s)}\}_{s\in [0,1]}$.\\
\indent Below we obtain further compactness and geometric properties of the family $\Psi$.

\begin{proposition} \label{Q-boundedness}
Let $K_{\infty} (\Psi)$ be the set of all $\bar u\in H^1(\R^N)$ such that there exist $s\in[0,1]$ and a bounded solution $u:\R\to H^1(\R^N)$ of $\Psi^{(s)}$ with $u(0)=\bar u$. Then the set $Q K_{\infty} (\Psi)$ {\em(}the projection of $K_\infty (\Psi)$ onto $X_- \oplus X_+${\em)} is bounded in $H^1(\R^N)$ and relatively compact in $L^2(\R^N)$.
\end{proposition}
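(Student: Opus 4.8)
The plan is to exploit that, along the spectral splitting $X=X_-\oplus X_0\oplus X_+$, the equation \eqref{eq-diff-g} with $G$ given by \eqref{def-g-r} is, in the $Q=Q_++Q_-$ directions, driven only by the forcing $QG(u,s)=sQF(Pu+sQu)$, which by $(f1)'$ obeys the uniform bound $\|QG(u,s)\|_{L^2}\le\|F(Pu+sQu)\|_{L^2}\le\|m\|_{L^2}$ --- independent of $s$, of $u$, and, crucially, of the a priori uncontrolled component $Pu$. Note that Corollary \ref{cor-admissibility} cannot be quoted directly: an element of $K_\infty(\Psi)$ carries only a solution that is bounded with a solution-dependent bound, and nothing is yet known about the size of $Pu$ (controlling it is the goal of the rest of Section 6); hence the argument must rest on the exponential dichotomy \eqref{X-plus-semigroup-ineq}--\eqref{X-minus-group-ineq} on $X_\pm$ together with the $L^2$-tightness of $m$, rather than on a global invariant bound. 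If $K_\infty(\Psi)=\emptyset$ there is nothing to prove, so fix a bounded full solution $u:\R\to H^1(\R^N)$ of some $\Psi^{(s)}$ and put $u_\pm:=Q_\pm u$, $g_\pm(t):=Q_\pm G(u(t),s)$, so that $\dot u_\pm=-Au_\pm+g_\pm$ with $\|g_\pm(t)\|_{L^2}\le\|m\|_{L^2}$.

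First I would prove that $QK_\infty(\Psi)$ is bounded in $H^1(\R^N)$. On $X_+$ the variation-of-constants formula and \eqref{X-plus-semigroup-ineq} give $\|S(t-r)u_+(r)\|_{H^1}\le K(t-r)^{-1/2}e^{-\rho_+(t-r)}\|u(r)\|_{L^2}\to0$ as $r\to-\infty$ (here the solution-dependent bound $\sup_\tau\|u(\tau)\|_{H^1}<\infty$ is used only to kill this boundary term), whence $u_+(t)=\int_{-\infty}^tS(t-\sigma)g_+(\sigma)\,d\sigma$ and $\|u_+(t)\|_{H^1}\le K\|m\|_{L^2}\int_0^\infty\tau^{-1/2}e^{-\rho_+\tau}\,d\tau=:C_+$. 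Since $S(\cdot)|_{X_-}$ is a $C_0$-group, the mild formulation on $X_-$ between times $t<T$, rearranged via $S(t-T)=\big(S(T-t)\big)^{-1}$, gives $u_-(t)=S(t-T)u_-(T)-\int_t^TS(t-\sigma)g_-(\sigma)\,d\sigma$; by \eqref{X-minus-group-ineq} the boundary term is $\le e^{\rho_-(t-T)}\|u(T)\|_{L^2}\to0$ as $T\to+\infty$, so $u_-(t)=-\int_t^{+\infty}S(t-\sigma)g_-(\sigma)\,d\sigma$ and $\|u_-(t)\|_{L^2}\le\|m\|_{L^2}/\rho_-$. As $X_-$ is finite dimensional, this upgrades to $\|u_-(t)\|_{H^1}\le C_-$. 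Both $C_\pm$ depend only on $m,K,\rho_\pm$, so $\|Qu(t)\|_{H^1}\le C:=C_++C_-$ for every $t\in\R$ and every element of $K_\infty(\Psi)$.

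Next I would establish relative compactness in $L^2(\R^N)$. Take any sequence $\bar u_m=Qu^{(m)}(0)\in QK_\infty(\Psi)$, with $u^{(m)}$ a bounded full solution of $\Psi^{(s_m)}$, and set $v_m:=Qu^{(m)}$; since $Q$ is bounded and commutes with $A$ on $H^2(\R^N)$, $v_m\in C(\R,H^2(\R^N))\cap C^1(\R,L^2(\R^N))$ is a classical solution of $\dot v_m=-Av_m+g_m$ with $g_m(t)=QG(u^{(m)}(t),s_m)$, $\|g_m(t)\|_{L^2}\le\|m\|_{L^2}$, and $\|v_m(t)\|_{H^1}\le C$ by the previous step. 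Applying Proposition \ref{classic_tail_estimates} to $v_m$, writing $\theta_n(t):=\int_{\R^N}\phi_n|v_m(t)|^2\,dx$, using $\varrho(V_\infty)>\lambda$ to pick $\varepsilon\in(0,\varrho(V_\infty)-\lambda)$ and $n_1\ge1$ with $\phi_n(V_\infty-\lambda)\ge\varepsilon\phi_n$ for $n\ge n_1$, bounding the $H^1$-term by $\beta_nC^2$, and estimating $\langle\phi_nv_m,g_m\rangle\le\frac\varepsilon2\theta_n+\frac1{2\varepsilon}\int\phi_n|g_m|^2$ by Young's inequality, I obtain $\frac12\dot\theta_n\le-\frac\varepsilon2\theta_n+\beta_nC^2+\frac1{2\varepsilon}\rho_n$ with $\rho_n:=\sup_{m,\,t}\int\phi_n|g_m(t)|^2$. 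The key point is that $\rho_n\to0$: since $g_m=s_mQF(\cdot)=s_m\big(F(\cdot)-PF(\cdot)\big)$, one has $\int\phi_n|g_m|^2\le2\int_{\{|x|\ge n\}}m^2\,dx+2\int\phi_n|PF(\cdot)|^2\,dx$, and the last integral tends to $0$ because $PF(\cdot)=\sum_{k=1}^d e_k(\cdot)\varphi_k$ with $|e_k|\le\|m\|_{L^2}\|\varphi_k\|_{L^2}$ and $\int\phi_n|\varphi_k|^2\to0$ (as $\varphi_k\in L^2(\R^N)$). Gronwall's lemma then yields $\theta_n(t)\le e^{-\varepsilon(t-r)}\theta_n(r)+\kappa_n$ with $\kappa_n\to0$ independent of $m$ and $t$; letting $r\to-\infty$ and using $\theta_n(r)\le\|v_m(r)\|_{L^2}^2\le C^2$ from the previous step gives $\int_{\{|x|\ge n\sqrt2\}}|v_m(0)|^2\,dx\le\theta_n(0)\le\kappa_n$ for all $m$ and all $n\ge\max\{n_0,n_1\}$. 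Finally, $(v_m(0))$ is bounded in $H^1(\R^N)$, hence relatively compact in $L^2(D(0,R))$ for each $R$ by the Rellich--Kondrachov theorem; combined with the uniform tail bound and a diagonal argument, this produces a subsequence of $(v_m(0))$ convergent in $L^2(\R^N)$, so $QK_\infty(\Psi)$ is relatively compact in $L^2(\R^N)$.

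The main obstacle is exactly the one flagged at the outset: there is no a priori global bound on $K_\infty(\Psi)$ itself (the $X_0$-component may be arbitrarily large), so the uniformity has to be extracted from the structure of the problem --- that the $Q$-equation is driven by a forcing bounded in $L^2$ by $\|m\|_{L^2}$ (this uses $(f1)'$ in an essential way; under the merely sublinear growth of Section 4 it would fail), and that the tail estimate closes with a constant $\kappa_n\to0$ irrespective of how large $Pu^{(m)}$ is, which rests on the uniform $L^2$-tightness provided by $m\in L^2(\R^N)$.
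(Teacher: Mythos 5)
Your proposal is correct and follows essentially the same route as the paper: Duhamel's formula with the exponential estimates \eqref{X-plus-semigroup-ineq}--\eqref{X-minus-group-ineq} and the uniform bound $\|F(\cdot)\|_{L^2}\leq\|m\|_{L^2}$ to bound $Qu$ in $H^1$, then Proposition \ref{classic_tail_estimates} applied to $w=Qu$ plus uniform tail smallness of $QG$ and Rellich--Kondrachov for the $L^2$-compactness. The only cosmetic differences are that you absorb the forcing term via Young's inequality (the paper bounds it by $\delta_n\|w\|_{H^1}$) and you prove the tail smallness of $PF(\cdot)$ directly through the expansion in the basis $\varphi_1,\dots,\varphi_d$ of $X_0$, where the paper invokes the Kolmogorov--Riesz theorem; both variants are equivalent.
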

\begin{proof}
Let $u:\R \to H^1(\R^N)$ be a bounded solution of $\Psi^{(s)}$ for some $s\in [0,1]$. Then $u$ is also a mild solution of the equation \eqref{eq-diff-g} and we have the Duhamel formula
\begin{equation}\label{duhamel-eq}
u(t) = S(t-t') u(t')+ \int_{t'}^{t} S(t-\tau) G(u(\tau),s)\,d\tau, \quad t'<t.
\end{equation}
By applying the projection operator $Q_+$ we get
$$
Q_+ u(t) = S(t-t')Q_+ u(t') + \int_{t'}^{t} S(t-\tau) Q_+ G(u(\tau),s) \, \d\tau.
$$
In view of \eqref{X-plus-semigroup-ineq} and the boundedness of $u$, by passing to the limit with $t'\to -\infty$, we get
$$
Q_+ u(t) = \int_{-\infty}^{t} S(t-\tau) Q_+ G(u(\tau),s) \, \d\tau, \quad t\in\R,
$$
where the integral is convergent in $X=L^{2}(\R^{N})$. Therefore, by \eqref{X-plus-semigroup-ineq} and \eqref{eq-s-g-1}, we have
\begin{equation}
\begin{aligned}\label{ineq-11ac}
\|Q_+ u(t) \|_{H^1} & \leq \int_{-\infty}^{t} K (t-\tau)^{-1/2} e^{-\rho_+(t-\tau)} \|Q_{+}G(u(\tau),s)\|_{L^2} \, d\tau \\
& \leq 2 K \|m\|_{L^{2}} \int_{0}^{+\infty} z^{-1/2} e^{-\rho_+ z}\, \d z
\end{aligned}
\end{equation}
for any $t\in\R$. Let us recall that the restriction of $\{S(t)\}_{t\geq 0}$ to the space $X_{-}$ can be extended to a $C_0$-group $\{ S(t)\}_{t\in\R}$ on this space. Hence, applying with the projection $Q_-$ on both sides of the equation \eqref{duhamel-eq} and using the $C_0$-group property, we get
$$
Q_- u(t') = S(t'-t) Q_- u(t) - \int_{t'}^{t} S(t'-\tau) Q_- G(u(\tau), s)\, d\tau, \quad t'<t.
$$
Using \eqref{X-minus-group-ineq} and passing to the limit with $t\to +\infty$ we obtain
\begin{equation}\label{eq-q-min}
Q_- u(t') = - \int_{t'}^{+\infty} S(t'-\tau)Q_- G(u(\tau), s)\, d\tau, \quad t'\in\R,
\end{equation}
where the integral is convergent in $X=L^{2}(\R^{N})$. Applying \eqref{X-minus-group-ineq} and \eqref{eq-s-g-1} to the equation \eqref{eq-q-min} yields
$$
\|Q_- u(t')\|_{L^2} \leq \int_{t'}^{+\infty} e^{(t'-\tau)\rho_-}\|Q_{+}G(u(\tau),s)\|_{L^2}\,d\tau\leq 
2\|m\|_{L^{2}}\int_{t'}^{+\infty} e^{(t'-\tau)\rho_-}\,d\tau \leq 2\|m\|_{L^{2}}/\rho_-.
$$
This combined with \eqref{ineq-11ac} implies the existence of $R_\infty>0$ such that
\begin{equation}\label{ineq-b-1}
\sup_{t\in\R} \| Q u(t) \|_{H^1} < R_\infty, \quad t\in\R
\end{equation}
for any full bounded solution $u$ of $\Psi^{(s)}$ with $s\in [0,1]$.\\
\indent To show the relative compactness, we act on \eqref{eq-diff-g} with the operator $Q$ and we write $w(t) := Qu(t)$ for $t\in\R$, to obtain the equation
\begin{align*}
\dot w(t) = -A w(t) + QG(u(t),s),\quad t>0.
\end{align*}
By Proposition \ref{classic_tail_estimates}, there exist a sequence $(\beta_{n})$ with $\beta_{n}\to 0^+$ as $n\to\infty$ and $n_0\geq 1$, such that if $u$ is a full solution of the semiflow $\Psi^{(s)}$, where $s\in[0,1]$, then
\begin{align}\label{eq-p-1}
\frac{1}{2}\frac{d}{dt}\langle\phi_{n}w(t), w(t)\rangle_{L^{2}} \leq -\int_{\R^{N}}(V_{\infty}-\lambda)\phi_{n}|w(t)|^{2}\,dx  + \beta_{n}\|w(t)\|_{H^{1}}^{2} + \langle w(t),\phi_{n}QG(u(t),s)\rangle_{L^{2}}
\end{align}
for $t\in\R$. From the assumption $\varrho(V_{\infty})>\lambda$, there are $\ve>0$ and $n_{0}\ge 1$ such that, for all $n\geq n_0$,
\begin{align}\label{ineq-12b}
\phi_{n}(x)(V_{\infty}(x)- \lambda) \ge \ve \phi_{n}(x)\quad  \text{for a.a. } x\in\R^{N}.
\end{align}
By \eqref{def-g-r} and $(f1)'$, we have
\begin{align}\label{eq-a-1}
\|\phi_{n}QG(u(t),s)\|_{L^{2}} = \|\phi_{n}QF(Pu(t) + sQu(t))\|_{L^{2}}  
\leq \|\phi_{n}m\|_{L^{2}}  + \|\phi_{n}PF(Pu(t) + sQu(t))\|_{L^{2}}.
\end{align}
Since $m\in L^{2}(\R^{N})$, we have $\delta^{1}_{n} := \|\phi_{n}m\|_{L^{2}} \to 0$ as $n\to \infty$. On the other hand, the image of the map $F$ is contained in the ball $B_{L^2}(0,\|m\|_{L^2})$
and hence its projection  onto the finite dimensional space $X_{0}$ is relatively compact in $L^2(\R^N)$. Therefore, by the Kolmogorov-Riesz theorem (see \cite{Hanche}), there is a sequence $(\delta^{2}_{n})$, depending on $\|m\|_{L^2}$, such that $\delta^{2}_{n}\to 0^+$ as $n\to \infty$ and $\|\phi_{n}PF(Pu(t) + sQu(t))\|_{L^{2}} \leq \delta^{2}_{n}$ for $t\in\R$ and $s\in[0,1]$. Consequently, we get $\delta_{n} := \delta^{1}_{n}+\delta^{2}_{n}$ that is independent of $s$, $u$ and $t$ such that
\begin{align*}
\langle w(t),\phi_{n}QG(u(t),s)\rangle_{L^{2}} \leq \delta_{n}\|w(t)\|_{H^{1}}, \quad t\in\R,
\end{align*}
and $\delta_{n}\to 0^+$ as $n\to \infty$. Now we combine this with \eqref{eq-p-1} and \eqref{ineq-12b} and use \eqref{ineq-b-1} to get
\begin{align*}
\frac{1}{2}\frac{d}{dt}\langle\phi_{n}w(t), w(t)\rangle_{L^{2}} & \leq -\ve\int_{\R^{N}}\phi_{n}|w(t)|^{2}\,dx  + \beta_{n}R_{\infty}^2 +  \delta_{n} R_\infty,\quad t\in\R.
\end{align*}
By integrating the above inequality we obtain, for all $n\geq n_0$ and $t,r\in\R$ with $t>r$,
\begin{align*}
\int_{\R^{N}}\phi_{n}|w(t)|^{2}\,dx & \leq e^{-2\ve (t-r)}\int_{\R^{N}}\phi_{n}|w(r)|^{2}\,dx + \frac{1}{2\ve}(1 - e^{-2\ve(t-r)})(\beta_{n}R_{\infty}^2 +  \delta_{n} R_\infty)\\
& \leq e^{-2\ve (t-r)}R^2 + \frac{1}{2\ve}(\beta_{n}R_{\infty}^2 +  \delta_{n} R_\infty).
\end{align*}
Passing with $r\to-\infty$ we see that, for all $t\in\R$ and $n\geq n_0$,
$$
\int_{\R^{N}}\phi_{n}|w(t)|^{2}\,dx \leq \frac{1}{2\ve}(\beta_{n}R_{\infty}^2 +  \delta_{n} R_{\infty}),
$$
which by the Rellich-Kondrachov theorem shows that 
the set $QK_\infty(\Psi)$ is relatively compact in $L^2(\R^N)$. Thus the proof of the proposition is completed. 
\end{proof}

From now on, we assume that $f$ satisfies additionally one of the resonance conditions $(LL)_{\pm}$ or $(SR)_{\pm}$. Both Landesman-Lazer and sign conditions have geometric implications in the phase space of the parabolic semiflow $\Phi$.
\begin{proposition}\label{geometric-cond} {\em (see \cite[Lemma 5.2]{Cw-Kr-2019})}
Let the map $F$ be given by \eqref{Nemytzki_f} and $M\subset H^1(\R^N)$. If either \\
\indent {\em (i) $(LL)_\pm$} holds and $M$ is bounded in $L^2(\R^N)$; or\\
\indent {\em (ii)} $(SR)_\pm$ holds and $M$ is relatively compact in $L^2(\R^N)$,\\
then there exists  $R_0>0$ and $\alpha>0$ such that, for all $v\in X_0$ with $\|v\|_{L^2}\geq R_0$ and $w\in M$
\begin{equation*}
\pm \langle v, F (v+w)\rangle_{L^2} > \alpha.
\end{equation*}
\end{proposition}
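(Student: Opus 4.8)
The plan is to argue by contradiction. If the assertion fails, then taking $R_{0}=n$ and $\alpha=1/n$ yields $v_{n}\in X_{0}$ with $t_{n}:=\|v_{n}\|_{L^{2}}\to\infty$ and $w_{n}\in M$ such that $\pm\langle v_{n},F(v_{n}+w_{n})\rangle_{L^{2}}\le 1/n$ for all $n$. Put $\varphi_{n}:=v_{n}/t_{n}$, so $\|\varphi_{n}\|_{L^{2}}=1$. Since $X_{0}=\Ker(-\Delta+V-\lambda)$ is finite dimensional, along a subsequence $\varphi_{n}\to\varphi$ with $\|\varphi\|_{L^{2}}=1$, simultaneously in $L^{2}$, in $L^{\infty}$ and a.e.\ (all norms on $X_{0}$ being equivalent). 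Two structural facts enter repeatedly: (a) by the exponential decay estimate \eqref{est-eigen} applied to a basis of $X_{0}$, there are $C,\delta>0$ with $|\varphi_{n}(x)|\le Ce^{-\delta|x|}$ for all $n$, so $|\varphi|\le Ce^{-\delta|x|}$ a.e.\ and in particular $\varphi m\in L^{1}(\R^{N})$; (b) by the unique continuation property of $-\Delta+V$ for Kato--Rellich potentials, a nonzero element of $X_{0}$ cannot vanish on a set of positive measure, so $|\{\varphi=0\}|=0$ and $\{\varphi>0\}\cup\{\varphi<0\}$ is conull.

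Consider first $(LL)_{+}$. Since $\langle v_{n},F(v_{n}+w_{n})\rangle_{L^{2}}=t_{n}\int_{\R^{N}}\varphi_{n}f(x,t_{n}\varphi_{n}+w_{n})\,dx$, it suffices to show $\liminf_{n}\int_{\R^{N}}\varphi_{n}f(x,t_{n}\varphi_{n}+w_{n})\,dx>0$, which contradicts $\langle v_{n},F(v_{n}+w_{n})\rangle_{L^{2}}\le 1/n$. On $\{\varphi=0\}$ the integrand is dominated by $Ce^{-\delta|x|}m\in L^{1}$ and tends to $0$ a.e., so this part vanishes in the limit. On $\{\varphi>0\}$ I would compare with $g_{L}(x):=\inf_{s\ge L}f(x,s)$, which satisfies $|g_{L}|\le m$ and $g_{L}\uparrow\check{f}_{+}(x)\ge 0$ as $L\to\infty$: splitting $\{\varphi>0\}$ into the set where $t_{n}\varphi_{n}+w_{n}\ge L$ (where $f\ge g_{L}$) and the remaining bad set $E_{n}^{L}$ (where $f\ge -m$) gives
\[
\int_{\{\varphi>0\}}\varphi_{n}f(x,t_{n}\varphi_{n}+w_{n})\,dx\ \ge\ \int_{\{\varphi>0\}}\varphi_{n}g_{L}\,dx-2\int_{E_{n}^{L}}\varphi_{n}m\,dx,
\]
where $\int_{\{\varphi>0\}}\varphi_{n}g_{L}\to\int_{\{\varphi>0\}}\varphi g_{L}$ (as $\varphi_{n}\to\varphi$ in $L^{2}$ and $\|g_{L}\|_{L^{2}}\le\|m\|_{L^{2}}$) and then $\int_{\{\varphi>0\}}\varphi g_{L}\uparrow\int_{\{\varphi>0\}}\varphi\check{f}_{+}$ by monotone convergence. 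The symmetric estimate on $\{\varphi<0\}$, using $h_{L}(x):=\sup_{s\le -L}f(x,s)\downarrow\hat{f}_{-}(x)\le 0$ and $\varphi_{n}<0$ there, yields $\liminf_{n}\int_{\{\varphi<0\}}\varphi_{n}f\ge\int_{\{\varphi<0\}}\varphi\hat{f}_{-}$. Adding up, $\liminf_{n}\int\varphi_{n}f\ge\int_{\{\varphi>0\}}\varphi\check{f}_{+}+\int_{\{\varphi<0\}}\varphi\hat{f}_{-}\ge 0$; and since $|\{\varphi=0\}|=0$, the positive-measure set supplied by $(LL)_{+}$ meets $\{\varphi>0\}\cup\{\varphi<0\}$ in positive measure, so at least one of the two integrals is strictly positive. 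The case $(LL)_{-}$ is completely symmetric.

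I expect the delicate point to be the estimate $\int_{E_{n}^{L}}\varphi_{n}m\,dx\to 0$ ($L$ fixed), which has to work even though $M$ is merely bounded in $L^{2}$, not relatively compact. I would prove it by splitting $E_{n}^{L}$ along $\{\varphi>1/k\}$ and $\{0<\varphi\le 1/k\}$: on the second piece the contribution is at most $\int_{\{0<\varphi\le 1/k\}}\varphi_{n}m\to\int_{\{0<\varphi\le 1/k\}}\varphi m$, which is small for large $k$ since $\varphi m\in L^{1}$; on $E_{n}^{L}\cap\{\varphi>1/k\}$ one has $\varphi_{n}>1/(2k)$ for large $n$, so $t_{n}\varphi_{n}+w_{n}<L$ forces $|w_{n}|>t_{n}/(4k)$, and Chebyshev's inequality gives $|E_{n}^{L}\cap\{\varphi>1/k\}|\le(4k/t_{n})^{2}C_{M}^{2}\to 0$ with $C_{M}:=\sup_{w\in M}\|w\|_{L^{2}}$, whence $\int_{E_{n}^{L}\cap\{\varphi>1/k\}}\varphi_{n}m\le C\|m\|_{L^{2}}|E_{n}^{L}\cap\{\varphi>1/k\}|^{1/2}\to 0$; a diagonal choice of $k$ and $n$ completes the claim. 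In the strong resonance case this step is not needed: relative compactness of $M$ lets us pass to a further subsequence with $w_{n}\to w_{0}$ a.e., and then $t_{n}\varphi_{n}(x)+w_{n}(x)\to\pm\infty$ wherever $\pm\varphi(x)>0$ without any bad-set analysis.

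For $(SR)_{+}$ I would not renormalise, writing $\langle v_{n},F(v_{n}+w_{n})\rangle_{L^{2}}=\int v_{n}f(x,v_{n}+w_{n})\,dx$ and using $s\,f(x,s)\ge 0$. Since $|v_{n}(x)|=t_{n}|\varphi_{n}(x)|\to\infty$ while $w_{n}(x)\to w_{0}(x)$, the set $\{|w_{n}|>|v_{n}|\}$ contains a.e.\ $x\in\{\varphi\ne 0\}$ for only finitely many $n$; from this and $s\,f(x,s)\ge0$ one obtains the pointwise bounds $v_{n}f(x,v_{n}+w_{n})\ge -|w_{n}|m$ everywhere and $\liminf_{n}v_{n}f(x,v_{n}+w_{n})\ge\check{k}_{+}(x)\mathbf{1}_{\{\varphi>0\}}+\check{k}_{-}(x)\mathbf{1}_{\{\varphi<0\}}$ on $\{\varphi\ne 0\}$. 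Applying Fatou's lemma to the nonnegative functions $v_{n}f(x,v_{n}+w_{n})+|w_{n}|m$ on $\{\varphi\ne 0\}$, and using $\int|w_{n}|m\to\int|w_{0}|m$ together with $|\{\varphi=0\}|=0$, gives $\liminf_{n}\langle v_{n},F(v_{n}+w_{n})\rangle_{L^{2}}\ge\int_{\{\varphi>0\}}\check{k}_{+}+\int_{\{\varphi<0\}}\check{k}_{-}$, which is strictly positive by $(SR)_{+}$ and $|\{\varphi=0\}|=0$ --- contradicting $\langle v_{n},F(v_{n}+w_{n})\rangle_{L^{2}}\le 1/n$. The case $(SR)_{-}$ is symmetric, with $\check{k}_{\pm}$ and ``$\ge$'' replaced by $\hat{k}_{\pm}$ and ``$\le$''.
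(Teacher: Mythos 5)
The paper does not actually prove this proposition: it is quoted verbatim from \cite[Lemma 5.2]{Cw-Kr-2019}, so there is no in-paper argument to compare yours with. Judged on its own, your reconstruction is correct and follows the route one would expect behind that lemma: contradiction with $t_n=\|v_n\|_{L^2}\to\infty$, normalization in the finite-dimensional kernel $X_0$ (so that $\varphi_n\to\varphi$ in every norm), the exponential decay \eqref{est-eigen} to get the integrable majorant $Ce^{-\delta|x|}m$, unique continuation (the paper's own reference \cite{Gossez}) to ensure $|\{\varphi=0\}|=0$, and then truncation/Fatou arguments to pass to $\check f_\pm,\hat f_\pm$ resp.\ $\check k_\pm,\hat k_\pm$. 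You also correctly isolate and resolve the real asymmetry between the two hypotheses: in the Landesman--Lazer case $M$ is only $L^2$-bounded, and your Chebyshev estimate $|E_n^L\cap\{\varphi>1/k\}|\leq (4k/t_n)^2C_M^2$ is exactly the device that makes the bad set negligible without compactness, while in the strong-resonance case the a.e.\ convergence $w_n\to w_0$ plus the sign condition $sf(x,s)\geq 0$ lets Fatou do the work. Two places deserve tightening but are only cosmetic: (1) your displayed inequality on $\{\varphi>0\}$ tacitly assumes $\varphi_n\geq 0$ there; the exceptional set $\{\varphi>0,\ \varphi_n<0\}$ is contained in $\{0<\varphi\leq\|\varphi_n-\varphi\|_{L^\infty}\}$ and its contribution is $o(1)$ by the domination $|\varphi_n|m\leq Ce^{-\delta|x|}m\in L^1(\R^N)$, so it should either be absorbed into the bad set or estimated separately; (2) in the $(SR)_\pm$ case the pointwise bound $\liminf_n v_nf(x,v_n+w_n)\geq\check k_\pm(x)$ is asserted rather than derived --- the one-line justification is $v_n/(v_n+w_n)\to 1$ together with $(v_n+w_n)f(x,v_n+w_n)\geq 0$, since $|v_n(x)|\to\infty$ while $w_n(x)$ stays bounded a.e.\ on $\{\varphi\neq 0\}$.
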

By Proposition \ref{Q-boundedness}, there exists $R_{\infty}>0$ such that for any bounded solution $u$ of $\Psi^{(s)}$ with $s\in [0,1]$,
\begin{equation}\label{Q-bound}
Qu(t)\in B_{H^1} (0,R_\infty), \quad \text{ for all }  \ t\in\R,
\end{equation}
and the set $QK_\infty(\Psi)$ is relatively compact in $L^2(\R^N)$. Therefore, by the use of Proposition \ref{geometric-cond}, we can choose $R_0>0$ such that 
\begin{align}\label{geom-cond2}
\pm \langle v, F (v+sw)\rangle_{L^2} > \alpha\quad \text{for \ $s\in[0,1]$, $w\in QK_\infty(\Psi)$ \ and \ $v\in X_0$ \ with \ $\|v\|_{L^2}\geq R_0$}
\end{align}
(the sign in \eqref{geom-cond2} is the same as in the Landesman-Lazer or sign condition that is satisfied by $f$). 
Now let us define
\begin{equation}\label{nbhd-def}
N:= \{ u\in H^1(\R^N) \mid \|Qu\|_{H^1} \leq R_\infty \ \text{ and } \ \|P u\|_{L^2}\leq R_0 \}.
\end{equation}
The set $N$ appears to be an isolating neighborhood with respect to the family $\Psi$ and plays a key rule in the Conley index formula derivation that is the main result of the section.

\begin{theorem}\label{18072019-0920}
Let $\Phi$ be the semiflow generated by \eqref{abs-eq} and $K_\infty (\Phi)$ be the set of all $\bar u\in H^1(\R^N)$, for which there exists a bounded solution $u:\R\to H^1(\R^N)$ of  $\Phi$ such that $u(0)=\bar u$.  Then $K_\infty(\Phi)$ is an isolated invariant set such that 
\begin{equation}\label{eq-k-inf}
K_\infty (\Phi) = \mathrm{Inv}_{\Phi}(N) \subset \mathrm{int}\, N
\end{equation}
where $N$ is given by \eqref{nbhd-def} and its Conley index with respect to $\Phi$ is given by the formula
\begin{equation}\label{eq-m-1}
h( \Phi , K_\infty (\Phi) ) =\left\{\begin{aligned}
&\Sigma^{d} && \text{ if  $(LL)_+$  or $(SR)_+$ holds},\\
&\Sigma^{d^{-}} && \text{ if  $(LL)_-$  or $(SR)_-$ holds},
\end{aligned}\right.
\end{equation}
where $d^{-}:= d^{-}(V,\lambda)$ and $d:=d^{-} + \dim \Ker (-\Delta+V-\lambda)$.
\end{theorem}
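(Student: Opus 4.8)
The plan is to deform $\Phi = \Psi^{(1)}$ to the semiflow $\Psi^{(0)}$ along the family $\{\Psi^{(s)}\}_{s\in[0,1]}$ from \eqref{eq-diff-g} with $G$ as in \eqref{def-g-r}, to check that $N$ from \eqref{nbhd-def} is a common admissible isolating neighborhood so that the homotopy property (H4) applies, and then to evaluate the index of $\Psi^{(0)}$ by a product argument. \emph{Step 1 ($N$ isolates, uniformly in $s$).} First I would prove that every bounded full solution $u\colon\R\to H^1(\R^N)$ of any $\Psi^{(s)}$, $s\in[0,1]$, satisfies $u(t)\in\mathrm{int}\,N$ for all $t$. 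The bound $\|Qu(t)\|_{H^1}<R_\infty$ is exactly Proposition \ref{Q-boundedness}. For the kernel component $p(t):=Pu(t)$ one uses $PAu=APu=0$ (because $X_0=\Ker A$) and $PG(u,s)=PF(p+sQu)$ to obtain $\tfrac12\tfrac{d}{dt}\|p(t)\|_{L^2}^2=\langle p(t),F(p(t)+sQu(t))\rangle_{L^2}$; since $Qu(t)\in QK_\infty(\Psi)$, the geometric estimate \eqref{geom-cond2} makes this derivative $>\alpha$ in the $(LL)_+$/$(SR)_+$ case (resp. $<-\alpha$ in the $(LL)_-$/$(SR)_-$ case) whenever $\|p(t)\|_{L^2}\ge R_0$, so a single occurrence of $\|p(t_0)\|_{L^2}\ge R_0$ would force $\|p(t)\|_{L^2}\to\infty$ as $t\to+\infty$ (resp. $t\to-\infty$), contradicting boundedness of $u$; hence $\|p(t)\|_{L^2}<R_0$ throughout. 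Since the $L^2$- and $H^1$-norms are equivalent on the finite-dimensional $X_0$, the set $N$ is bounded in $H^1(\R^N)$, so any full solution contained in $N$ is bounded; this gives both \eqref{eq-k-inf}, i.e. $K_\infty(\Phi)=\mathrm{Inv}_\Phi(N)\subset\mathrm{int}\,N$, and $K_s:=\mathrm{Inv}_{\Psi^{(s)}}(N)\subset\mathrm{int}\,N$ for $s\in[0,1]$. Boundedness of $N$ also yields, by Corollary \ref{cor-admissibility}, admissibility of $N$ with respect to the whole family, so (H4) applies and $h(\Phi,K_\infty(\Phi))=h(\Psi^{(1)},K_1)=h(\Psi^{(0)},K_0)$.

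\emph{Step 2 (decoupling at $s=0$).} At $s=0$ we have $G(u,0)=PF(Pu)$, and applying the projections $P$ and $Q$ to \eqref{eq-diff-g} shows that, relative to the splitting $H^1(\R^N)=X_0\oplus Y$ with $Y:=H^1(\R^N)\cap(X_-\oplus X_+)$, the semiflow $\Psi^{(0)}$ is the product of the finite-dimensional flow $\psi$ on $X_0$ generated by $\dot p=PF(p)$ and the linear semiflow $S(t)|_Y$ on $Y$. Correspondingly $N=N_{X_0}\times N_Y$ with $N_{X_0}=\{p\in X_0\mid\|p\|_{L^2}\le R_0\}$ and $N_Y=\{q\in Y\mid\|q\|_{H^1}\le R_\infty\}$, so $K_0$ splits as well. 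By Theorem \ref{linear-conley-index}, $\mathrm{Inv}_{S|_Y}(N_Y)=\{0\}$ and $h(S(t)|_Y,\{0\})=\Sigma^{\dim X_-}=\Sigma^{d^-}$, hence the product property (H3) reduces matters to the identification of $h(\psi,\mathrm{Inv}_\psi(N_{X_0}))$, after which $h(\Psi^{(0)},K_0)=h(\psi,\mathrm{Inv}_\psi(N_{X_0}))\wedge\Sigma^{d^-}$.

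\emph{Step 3 (finite-dimensional index and conclusion).} Enlarging $R_0$ if necessary --- which does not affect Step 1 --- I would use Proposition \ref{geometric-cond} with $M=\{0\}$ to get $\pm\langle v,F(v)\rangle_{L^2}>\alpha$ for all $v\in X_0$ with $\|v\|_{L^2}\ge R_0$. Along $\psi$ one has $\tfrac12\tfrac{d}{dt}\|p(t)\|_{L^2}^2=\langle p(t),F(p(t))\rangle_{L^2}$, so in the $(LL)_+$/$(SR)_+$ case the vector field $PF$ points strictly outward on the smooth sphere $\partial N_{X_0}\cong S^{\dim X_0-1}$, making $N_{X_0}$ an isolating block with exit set all of $\partial N_{X_0}$ and thus $h(\psi,\mathrm{Inv}_\psi(N_{X_0}))=[N_{X_0}/\partial N_{X_0}]=\Sigma^{\dim X_0}$; in the $(LL)_-$/$(SR)_-$ case it points strictly inward, the exit set is empty, and $h(\psi,\mathrm{Inv}_\psi(N_{X_0}))=[(N_{X_0}\,\dot\cup\,\{a\},a)]=\Sigma^0$ since $N_{X_0}$ is contractible. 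Combining with Step 2, using $\Sigma^j\wedge\Sigma^k=\Sigma^{j+k}$, $\dim X_0=\dim\Ker(-\Delta+V-\lambda)$ and $d=d^-+\dim\Ker(-\Delta+V-\lambda)$, gives $h(\Phi,K_\infty(\Phi))=h(\Psi^{(0)},K_0)=\Sigma^{d}$ in the first case and $\Sigma^{d^-}$ in the second, which is \eqref{eq-m-1}.

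\emph{Main obstacle.} The crux is Step 1: the uniform-in-$s$ a priori control of the kernel component $\|Pu(t)\|_{L^2}$ along bounded solutions. This is precisely where the Landesman--Lazer and sign hypotheses enter, through the geometric estimate \eqref{geom-cond2}, and it has to be paired with the a priori bound on $QK_\infty(\Psi)$ from Proposition \ref{Q-boundedness}. Once $N$ is known to be a uniform isolating neighborhood, Steps 2 and 3 are routine bookkeeping with the standard Conley index properties.
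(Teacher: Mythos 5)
Your proposal is correct and follows essentially the same route as the paper: the uniform-in-$s$ isolation of $N$ via Proposition \ref{Q-boundedness} for the $Q$-part and the geometric estimate \eqref{geom-cond2} for the kernel component, the homotopy (H4) down to $\Psi^{(0)}$, the product decomposition into the flow $\dot v = PF(v)$ on $X_0$ and the linear semiflow on $Y$, and the evaluation via Theorem \ref{linear-conley-index}, (H3), and the isolating block $N\cap X_0$ with exit set either the full sphere or empty. The only cosmetic difference is that you invoke Proposition \ref{geometric-cond} with $M=\{0\}$ (enlarging $R_0$) for the finite-dimensional block, whereas the paper simply uses \eqref{geom-cond2} at $s=0$, which already yields $\pm\langle v,F(v)\rangle_{L^2}>\alpha$; both are fine.
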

\begin{proof}
Let $u:\R\to H^1(\R^N)$ be an arbitrary bounded solution of  of $\Psi^{(s)}$ for some $s\in[0,1]$. We show that $u(R)\subset\mathrm{int}\,N$. Indeed, suppose to the contrary, that, for some $t_0\in \R$, either $\|Qu(t_0)\|_{L^2} \geq R_\infty$ or $\|Pu(t_0)\|_{L^2} \geq R_0$. From \eqref{Q-bound} it follows that $\|Pu(t_0)\|_{L^2}=R_0$  and, by the geometric condition \eqref{geom-cond2}
\begin{equation*}
\pm \left.\frac{d}{dt}\| Pu(t) \|_{L^2}^2 \right|_{t=t_0}  = \pm 2 \langle Pu(t_0),  PG(u(t_0),s)\rangle_{L^2}
= \pm 2 \langle Pu(t_0),  F(Pu(t_0)+sQu(t_0))\rangle_{L^2} >2\alpha.
\end{equation*}
This in turn implies that either $\|Pu(t)\|_{L^2}^{2} \geq R_0^{2} + 2\alpha(t-t_0)$ for $t>t_0$ (or $\|Pu(t)\|_{L^2}^{2} \geq R_0^{2} + 2\alpha(t_0-t)$ for $t<t_0$, respectively), which means that $u$ would not be bounded. This gives a contradiction and proves the desired inclusion. Consequently \eqref{eq-k-inf} holds. Furthermore, by Corollary \ref{cor-admissibility}, $N$ is admissible with respect to $\{ \Psi^{(s)}\}_{s\in [0,1]}$  and, by the homotopy invariance of Conley index -- see (H4) in Section 2, we get
$$
h(\Phi, K_\infty (\Phi)) = h(\Psi^{(1)}, K_1) = h(\Psi^{(0)},K_{0}).
$$
Now we observe that $\Psi^{(0)}$ is conjugate to the semiflow $\Phi_P \times \Phi_Q$,
where $\Phi_P:X_0\to X_0$ is generated by $\dot v = P F(v)$, whereas the semiflow $\Phi_Q:  Y \to Y$, where $Y=H^{1}(\R^{N})\cap (X_{+}\oplus X_{-})$ is the linear space defined in Theorem \ref{linear-conley-index}, is generated by the linear equation $\dot w = -A_{|Y} w$ for $t>0$. In particular 
\begin{equation}\label{eq-con-1}
h(\Psi^{(0)}, K_0)= 
h(\Phi_P \times \Phi_Q, \mathrm{Inv}_{\Phi_P}\, (N_P)\times 
\mathrm{Inv}_{\Phi_Q}\, (N_Q)),
\end{equation}
where $N_P :=N\cap X_0 = \{ u \in X_0 \mid \|u\|_{L^2}\leq R_0\}$ and $N_Q :=N\cap Y = \{ u \in Y \mid \|u\|_{H^1}\leq R_\infty\}$.

By use of Theorem \ref{linear-conley-index}, we have $\mathrm{Inv}_{\Phi_Q}\,(N_Q) = \{0\}$
and 
\begin{equation}\label{eq-c-q}
h(\Phi_Q, \{ 0\})= \Sigma^{d_{-}}.
\end{equation}
Clearly, if $u_P:[\delta_1,\delta_2)\to X_0$, where $\delta_1>0$ and $\delta_2\ge 0$, is a solution of the flow $\Phi_P$ and $u_P(0)\in\partial N_P$, then
$\|u_P(0)\|_{L^2}=R_0$ and, by the use of \eqref{geom-cond2}, we have
\begin{equation*}
\pm \left.\frac{d}{dt}\| u_P(t) \|_{L^2}^2\right|_{t=0}  = \pm 2 \langle u_P(0),  PF(u_P(0))\rangle_{L^2}
= \pm 2 \langle u_P(0),  F(Pu_P(0))\rangle_{L^2} >2\alpha.
\end{equation*}
This implies that the set $N_P$ is an isolating block for $\Phi_{P}$ with the exist set 
\begin{equation}
N_P^-=\left\{\begin{aligned}
& \{ u \in X_0 \mid \|u\|_{L^2}= R_1\} &&  \text{ if  $(LL)_+$  or $(SR)_+$ holds},\\
& \emptyset && \text{ if  $(LL)_-$  or $(SR)_-$ holds}.
\end{aligned}\right.
\end{equation}
Therefore we have
\begin{equation}\label{eq-c-p}
h(\Phi_P, \mathrm{Inv}_{\Phi_P}\, (N_P)) = \left\{ \begin{aligned}
& \Sigma^{\dim X_0} && \text{ if  $(LL)_+$ or $(SR)_+$ holds},\\
& \Sigma^{0} && \text{ if  $(LL)_-$ or $(SR)_-$ holds}.
\end{aligned}\right.
\end{equation}
By \eqref{eq-con-1} and \eqref{eq-c-q} and the multiplication property of the Conley index -- see (H3) in Section 2, we have
\begin{align*}
h(\Psi^{(0)},K_{0}) & = h(\Phi_P \times \Phi_Q, \mathrm{Inv}_{\Phi_P}\, (N_P) \times \{0\}) \\
& = h(\Phi_P, \mathrm{Inv}_{\Phi_P}\, (N_P) )\wedge h( \Phi_Q,\{0\}) \\
& = h(\Phi_P, \mathrm{Inv}_{\Phi_P}\, (N_P) )\wedge \Sigma^{d^-},    
\end{align*}
which together with \eqref{eq-c-p} yields the required index formula \eqref{eq-m-1}.
\end{proof}

\begin{proof}[Proof of Theorem \ref{30042019-1204}]
Note that in view of Theorem \ref{18072019-0920} we have $h(\Phi,K_\infty(\Phi))=\Sigma^{k_\infty}$, where 
\begin{equation}\label{def-k-infty}
k_\infty =  \left\{\begin{aligned}
&d^{-}(V,\lambda) + \dim\, X_0 &&  \text{ if  $(LL)_+$ or $(SR)_+$ holds},\\
&d^{-}(V,\lambda) && \text{ if  $(LL)_-$ or $(SR)_-$ holds}.
\end{aligned}\right.
\end{equation}
This implies that the Conley index of
$K_\infty (\Phi)$ is nontrivial and, in view of the existence property of Conley index -- see (H1), we infer that $K_\infty (\Phi)\neq \emptyset$. Hence there exists a full bounded solution for $\Phi$, which allows us to apply the argument with Lyapunov functional \eqref{eq-lyap} from the proof of Theorem \ref{noneresonat-case-theorem}, to get that the sum $\alpha(u)\cup\omega(u)$ contains the desired stationary solution.
\end{proof}

\begin{proof}[Proof of Theorem \ref{30042019-1206}]
By Theorem \ref{noresonant-index-formulae} (i) we see that $K_0 = \{ 0\}$ is an isolated invariant set
with respect to $\Phi$ and $h(\Phi, K_0)=\Sigma^{d^- (V-\alpha,\,\lambda)}$. In view of Theorem \ref{18072019-0920}, $K_\infty(\Phi)$ is also an isolated $\Phi$-invariant set and $h(\Phi,K_\infty(\Phi))=\Sigma^{k_\infty}$, where $k_\infty$ is given by \eqref{def-k-infty}. 
In both cases (i) and (ii) we have
$$
\overline{0}\neq \Sigma^{d^- (V-\alpha,\,\lambda)} = 
h(\Phi, K_0)\neq h(\Phi,K_\infty(\Phi))=\Sigma^{k_\infty}\neq \overline{0}.
$$
Therefore, in virtue of Theorem \ref{rybakowski-irreducible}, there exists a full solution $u:\R\to H^1(\R^N)$ that is nontrivial and either $\alpha(u)\subset K_0$ or $\omega(u)\subset K_0$. 
Using the Lyapunov functional \eqref{eq-lyap}, 
we derive that either $\alpha(u)$ or $\omega(u)$ contains a nonzero equilibrium point and the zero solution is the other side limit set of $u$. Thus the proof of the theorem is completed. 
\end{proof}

\begin{proof}[Proof of Corollary \ref{corollary-resonance}]
If (i) holds, then we have $d^{-}(V-\alpha,\lambda)=
d^{-}(V,\bar\alpha+\lambda)\leq d^{-}(V,\lambda) < k_\infty$, where $k_\infty$ is given by \eqref{def-k-infty}. In case of (ii) we get $d^{-}(V-\alpha,\lambda) = d^{-}(V,\bar\alpha+\lambda)> d^{-}(V,\lambda)=k_\infty$, since $d^{-}(V, \bar\alpha+\lambda)-d^{-}(V,\lambda) \geq \dim\, X_0$  due to the fact that $\bar \alpha >0$.
When (iii) holds, then obviously $k_\infty = d^{-}(V,\lambda) + \dim\, X_0 < d^-(V,\bar \alpha+\lambda) =d^- (V-\alpha,\lambda)$, since, by assumption, there is at least one eigenvalue in the interval  $(\lambda, \bar\alpha +\lambda)$.
Finally, if (iv) holds, then in a similar way, we obtain $k_\infty=d^{-} (V,\lambda) >d^{-}(V, \bar\alpha+\lambda) = d^- (V-\alpha,\lambda)$.
In all four cases we use Theorem \ref{30042019-1206} to derive the assertion.
\end{proof}

\parindent = 0 pt
\end{document}